\def\Rn{\mathbb{R}^n}
\newtheorem{theorem}{Theorem}[section]
\newtheorem{definition}[theorem]{Definition}
\newtheorem{lemma}[theorem]{Lemma}
\newtheorem{corollary}[theorem]{Corollary}
\newtheorem{proposition}[theorem]{Proposition}
\newtheorem{problem}[theorem]{Problem}
\numberwithin{equation}{section}
 \def\LC{{\mathrm{LC}}_n}
 \def\R{\mathbb{R}}
 \def\C{\mathfrak C}
 \def\dom{{\mathrm {dom}}}
 \def\sphere{S^{n-1}}
 \def\R{\mathbb{R}}
\def\Rma{\mathfrak{R}_{\alpha}}
\def\Ia{\mathcal{I}_{\alpha}}
\begin{document}

\title{{The Riesz $\alpha$-energy of log-concave functions and related Minkowski problem} \footnote{Keywords: Minkowski problems, log-concave functions, Riesz $\alpha$-energy, Riesz $\alpha$-potential.}}

\author{Niufa Fang,   Deping Ye and Zengle Zhang}
\date{}

\maketitle

\begin{abstract}We calculate the first order variation of the Riesz $\alpha$-energy of a log-concave function $f$ with respect to the Asplund sum. Such a variational formula induces the Riesz $\alpha$-energy measure of log-concave function $f$, which will be denoted by $\mathfrak{R}_{\alpha}(f, \cdot)$. We pose the related Riesz $\alpha$-energy Minkowski problem aiming  to find necessary and/or sufficient conditions on a pregiven Borel measure $\mu$ defined on $\Rn$ so that $\mu=\mathfrak{R}_{\alpha}(f,\cdot)$ for some log-concave function $f$. Assuming enough smoothness, the Riesz  $\alpha$-energy Minkowski problem reduces to a new Monge-Amp\`{e}re type equation involving the Riesz $\alpha$-potential. Moreover, this new Minkowski problem can be viewed as a functional  counterpart  of the recent   Minkowski problem for the chord measures in integral geometry posed by Lutwak, Xi, Yang and Zhang (Comm.\ Pure\ Appl.\ Math.,\ 2024). The Riesz $\alpha$-energy Minkowski problem will be solved under certain mild conditions on $\mu$. 

\vskip 2mm \noindent Mathematics Subject Classification (2020):  26B25 (primary),  52A40, 52A41, 35G20, 31B99.\end{abstract}

\section{Introduction} 
The variational formula regrading log-concave functions has received a lot of attention recently. Hereafter, by a log-concave function, we mean a function $f:\mathbb{R}^n\to[0,\infty)$ such that $\varphi=-\log f:\mathbb{R}^n\to\mathbb{R}\cup \{+\infty\}$ is convex, namely $$\varphi(\lambda x+(1-\lambda)y) \leq \lambda\varphi( x)+(1-\lambda)\varphi(y)$$ holds for any $x, y\in \Rn$ and $\lambda\in [0, 1]$.   Let $\LC$ be the set of upper semi-continuous log-concave functions such that the total mass of $f$ is nonzero and finite, i.e., $$0<J(f)=\int_{\Rn} f(x)dx<\infty.$$
The story of calculating  the variational formula regrading log-concave functions starts from the seminal work by Colesanti and Fragal\`a in \cite{CF13}, where they established some variational formulas of $J(f)$ in terms of the Asplund sum  $f\oplus t\bullet g$. Hereafter, for two log-concave functions $f\!=\!e^{-\varphi}, g\!=\!e^{-\psi}$ and $t>0$,   
\begin{equation}\label{def-asp-sum-1}
	(f\oplus t\bullet g) = e^{-(\varphi^*+t\psi^*)^*},
\end{equation} where $\phi^*$ denotes the Legendre transform of $\phi:\R^n\to\R \cup \{+\infty\}$ (not necessarily convex), i.e., 
\begin{equation}\label{def-dual}
\phi^*(y)=\sup_{x\in\mathbb{R}^n}\left\{\langle x,y\rangle-\phi(x)\right\} \ \ \ \mathrm{for} \ \ y\in\mathbb{R}^n,
\end{equation} where $\langle x, y\rangle $ is the inner product of $x,y\in \R^n$.  The results of  Colesanti and Fragal\`a \cite{CF13} require certain regularity on functions $f$ and $g$, and in particular the requirement that $g$ is an admissible perturbation of $f$, namely, there exists a constant $c_1>0$ such that $\varphi^\ast-c_1\psi^\ast$ is a convex function. Fang, Xing and Ye in \cite{FXY20+} obtained the variational formula for the total mass in term of the $L_p$ Asplund sum of log-concave functions.

Rotem in \cite{Rot20, Rotem2022} dropped the regularity assumptions and the admissible-perturbation, and obtained the following more general variational formula.  For any $f\in\LC$, let $K_f=\overline{\{x\in\mathbb{R}^n: f(x)>0\}}$, where $\overline{E}$ denotes the closure of $E\subset\Rn.$  Note that  $K_f$ is a closed convex set and its boundary $\partial K_f$ is a  Lipschitz manifold. Therefore, the Gauss map $\nu_{K_f}$ is defined $\mathcal{H}^{n-1}$-almost everywhere (where $\,\mathcal{H}^{n-1}$ is the $(n-1)$-dimensional Hausdorff measure) and the support function of $K_f$  can be  defined as $h_{K_f}(u)=\sup_{x\in K_f}\langle u,x\rangle$ for $u\in S^{n-1}$.

\begin{theorem}\label{Rotem-11}
Let $f=e^{-\varphi}\in \LC$ and $g=e^{-\psi}\in\LC$.  Then
\begin{align}
\lim_{t\to 0^+}\frac{J(f\oplus t\bullet g)-J(f)}{t}
=\int_{K_f}\psi^\ast(\nabla \varphi(x)) f(x)\,dx+\int_{\partial K_f}h_{K_g}(\nu_{K_f}(x)) f(x)\,d\mathcal{H}^{n-1}(x), \label{formula-Rotem}
\end{align} where    $\nabla \varphi$ is the gradient of $\varphi$.  
\end{theorem}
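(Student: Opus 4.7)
The plan is to reduce the variational problem to the well-understood Minkowski sum $K_f + tK_g$. The starting observation is that the convex function $\varphi_t := (\varphi^* + t\psi^*)^*$ can be rewritten as the inf-convolution
\begin{equation*}
\varphi_t(x) = \inf_{z\in\Rn}\bigl[\varphi(x - tz) + t\psi(z)\bigr],
\end{equation*}
whose effective domain equals $\dom(\varphi) + t\,\dom(\psi)$; consequently $K_{f\oplus t\bullet g} = K_f + tK_g$. This motivates the decomposition
\begin{equation*}
J(f\oplus t\bullet g) - J(f) \;=\; \int_{K_f}\!\bigl[e^{-\varphi_t(x)} - e^{-\varphi(x)}\bigr]\,dx \;+\; \int_{(K_f + tK_g)\setminus K_f}\! e^{-\varphi_t(x)}\,dx \;=:\; I_1(t) + I_2(t),
\end{equation*}
which I expect, after dividing by $t$ and letting $t\to 0^+$, to produce respectively the interior volume integral and the boundary surface integral on the right-hand side of \eqref{formula-Rotem}.

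For $I_1(t)/t$, the pointwise limit of the integrand comes from the envelope theorem applied to the dual representation $\varphi_t(x) = \sup_{y}[\langle x, y\rangle - \varphi^*(y) - t\psi^*(y)]$: at every $x \in \mathrm{int}(K_f)$ at which $\varphi$ is differentiable (hence almost everywhere, by standard convex analysis), the supremum at $t = 0$ is uniquely attained at $y = \nabla\varphi(x)$, whence $\partial_t\varphi_t(x)|_{t=0^+} = -\psi^*(\nabla\varphi(x))$ and therefore
\begin{equation*}
\lim_{t\to 0^+}\frac{e^{-\varphi_t(x)} - e^{-\varphi(x)}}{t} \;=\; \psi^*(\nabla\varphi(x))\,f(x).
\end{equation*}
The passage to $I_1(t)/t\to\int_{K_f}\psi^*(\nabla\varphi)f\,dx$ is then a dominated convergence step; a suitable majorant is obtained from the monotonicity of the difference quotients $(\varphi_t(x)-\varphi(x))/t$ in $t$ (inherited from the convexity of $t\mapsto\varphi_t(x)$, which is immediate from $\varphi_t^* = \varphi^*+t\psi^*$ being affine in $t$), together with the integrability supplied by $J(f\oplus 1\bullet g)<\infty$.

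For $I_2(t)/t$, I would parametrize the collar $(K_f + tK_g)\setminus K_f$ via the Gauss map of $K_f$: for $\mathcal{H}^{n-1}$-a.e.\ boundary point $y \in \partial K_f$, the segment $\{y + s\,\nu_{K_f}(y) : 0 < s \le t\,h_{K_g}(\nu_{K_f}(y))\}$ sweeps out (up to Lebesgue-negligible sets) the collar as $y$ ranges over $\partial K_f$. A Lipschitz tube/coarea argument, the non-smooth analogue of the classical identity $\frac{d}{dt}\mathrm{vol}(K_f + tK_g)|_{t=0^+} = \int_{\partial K_f}h_{K_g}(\nu_{K_f})\,d\mathcal{H}^{n-1}$, combined with the convergence $e^{-\varphi_t(y + s\nu_{K_f}(y))}\to f(y)$ for $y\in\partial K_f$ as $t\to 0^+$ (obtained from upper semi-continuity of $f$ and the simple one-sided bound $\varphi_t(\,\cdot\,) \le \varphi(\,\cdot\, - tz) + t\psi(z)$ applied to suitable test vectors $z$), then yields the claimed boundary integral.

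The main obstacle is the integrable majorant for $I_1$: because $\nabla\varphi$ can blow up as $x\to\partial K_f$ while $\psi^*$ is typically unbounded at infinity, the product $\psi^*(\nabla\varphi(x))f(x)$ is integrable only through the rapid decay of $f$ that is itself encoded in the finiteness of $J(f\oplus 1\bullet g)$; formalizing this interplay --- producing a $t$-uniform integrable bound on $|e^{-\varphi_t}-e^{-\varphi}|/t$ across all of $K_f$ --- is the delicate technical point. A secondary issue is that $\partial K_f$ is only a Lipschitz manifold, so the tubular parametrization and the coarea identity underlying the analysis of $I_2$ must be justified up to an $\mathcal{H}^{n-1}$-null exceptional set where $\nu_{K_f}$ is not defined.
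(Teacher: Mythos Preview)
This theorem is not proved in the paper at all: it is quoted as Rotem's result from \cite{Rot20,Rotem2022}, and the paper only summarizes his method in one sentence --- ``Rotem used the representation formula of variation measures for bounded variation functions and also used the geometric variational formulas of convex bodies (employing to super-level sets of $f\in\LC$).'' So there is no proof here to compare against line by line, but the described strategy is quite different from yours: Rotem writes $J(f)=\int_0^\infty |\{f>s\}|\,ds$, applies the classical first-variation formula for volumes to each super-level set, and packages the result through the total-variation measure of the BV function $f$. Your plan is the more ``direct'' one: split $J(f_t)-J(f)$ into an interior piece $I_1$ on $K_f$ and a collar piece $I_2$ on $(K_f+tK_g)\setminus K_f$, and treat each separately.

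Your outline is reasonable and the pointwise limits are correct, but the specific mechanism you propose for the majorant in $I_1$ does not work as written. Convexity of $t\mapsto\varphi_t(x)$ gives that the difference quotient $(\varphi_t(x)-\varphi(x))/t$ is \emph{non-decreasing} in $t$; for $t\in(0,1]$ this yields only the upper bound $(\varphi_t-\varphi)/t\le\varphi_1-\varphi$, i.e.\ a \emph{lower} bound on $e^{-\varphi_t}$, which is the wrong direction for dominating $(e^{-\varphi_t}-e^{-\varphi})/t$ from above. The lower bound on $(\varphi_t-\varphi)/t$ coming from monotonicity is exactly $-\psi^*(\nabla\varphi(x))$, so using it is circular. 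Nor is $t\mapsto(e^{-\varphi_t(x)}-e^{-\varphi(x)})/t$ itself monotone in general (log-concavity in $t$ does not imply this), so a bare monotone-convergence argument also fails. This is precisely the obstruction that pushes Rotem to the BV/super-level-set route, and it is the reason the present paper, for its own functional $\Ia$, imposes the comparison hypothesis \eqref{nat-condition-1}: that hypothesis manufactures an explicit dominating family $\widehat f_t$ (see \eqref{hat-f} and Lemma~\ref{Le.Iff-bound.}) against which the generalized dominated convergence theorem can be run. Without either an assumption of that type or Rotem's layer-cake argument, your $I_1$ step is a genuine gap.
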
  
Theorem \ref{Rotem-11} gives ideas about how a typical variational formula for log-concave functions looks like. Note that the first integral in the right side of \eqref{formula-Rotem}  gives a Borel measure $\mu_f$ defined on $\Rn$, the so-called surface area measure of $f$ \cite{CF13} (or the moment measure used in \cite{CK2015}): \begin{align}\label{surface-a-f} \int_{K_f} \mathbf{h}(\nabla \varphi(x)) f(x)\,dx= \int_{\mathbb{R}^n}\mathbf{h}(y)  \,d\mu_f(y),\end{align}
for each Borel function $\mathbf{h}:\mathbb{R}^n\to\mathbb{R}$ such that $\mathbf{h}\in\mathcal{L}^1(\mu_f(\cdot))$ or $\mathbf{h}$ is non-negative. The second integral in \eqref{formula-Rotem} generates a Borel measure defined on the unit sphere $\sphere$, i.e., \begin{align}\label{surface-a-f-s} 
 \int_{\sphere}\mathbf{g}(u)\, d\mu_f^s(f)=\int_{\partial K_f}\mathbf{g}(\nu_{K_f}(x)) f(x)\,d\mathcal{H}^{n-1}(x), 
\end{align}  for each Borel function $\mathbf{g}:\sphere \to\R$ such that $\mathbf{g}\in\mathcal{L}^1(\mu_f^s(\cdot))$ or $\mathbf{g}$ is non-negative.

 To obtain Theorem \ref{Rotem-11}, Rotem used the representation formula of variation measures for bounded variation functions and also used the geometric variational formulas of convex bodies (employing to super-level sets of $f\in \LC$). Rotem's techniques have been successfully applied to the  $(q-n)$-th moment of $f\in \LC$ in \cite{HLXZ} by Huang, Liu, Xi and Zhao, and to the Orlicz moment of $f\in \LC$ by Fang, Ye, Zhang and Zhao  \cite{FYZZ}. However, the variational results in \cite{FYZZ,HLXZ} require the following  additional condition: for some $\alpha_0\in(0,1)$,
\begin{align}
\limsup_{|x|\to0}\frac{|f(x)-f(o)|}{|x|^{\alpha_0+1}}<+\infty \label{addition-at-o},
\end{align} where $|x|$ denotes the Euclidean norm of $x\in \Rn$. We would like to mention that additional condition \eqref{addition-at-o} is mainly used to control the position of the origin $o\in \Rn$ so that  the origin $o$ sits in the interiors of all super-level sets of $f\in \LC$ in \cite{FYZZ,HLXZ}. Very recently,  Ulivelli in \cite{Ulivelli}, found another way to prove the variational formula of the $(q-n)$-th moments of log-concave function $f\in \LC$ in terms of the Asplund sum \eqref{def-asp-sum-1} where Ulivelli was able to remove the extra condition \eqref{addition-at-o}. Ulivelli's approach is again based on geometric variational formulas for convex bodies $K^{\varphi}$ constructing from the given convex function $\varphi=-\log f$ whose domain is assumed to be compact. By an approximation approach, Ulivelli proves the following variational formula (see \cite[Theorem 1.3]{HLXZ} for the statement with the extra condition \eqref{addition-at-o}). Note that, again like in \eqref{surface-a-f} and \eqref{surface-a-f-s}, the integral expressions in Theorem \ref{HLXZ-Theo1} produce a Borel measure defined on $\Rn$  and a Borel measure defined on $\sphere,$ respectively, (see \cite{HLXZ} for more details).

\begin{theorem}\label{HLXZ-Theo1} 
   Let $q>0$, $f=e^{-\varphi}\in \LC$ and $g=e^{-\psi}\in\LC$. Suppose that $\widetilde{V}_q(f):=\int_{\R^n}f(x)|x|^{q-n}dx>0$ and $K_g$ is compact. If $\varphi(o),\psi(o)<+\infty$, then
\begin{align*}
\lim_{t\to 0^+}\frac{\widetilde{V}_q(f\oplus t\bullet g)-\widetilde{V}_q(f)}{t}
=\!\!\int_{K_f}\!\psi^\ast(\nabla\varphi(x)) f(x)|x|^{q-n}\,dx \!+\!\int_{\partial K_f}\!\!h_{K_g}(\nu_{K_f}(x))   f(x)|x|^{q-n}\,d\mathcal{H}^{n-1}(x).
\end{align*}  
\end{theorem}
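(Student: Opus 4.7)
The plan is to follow Ulivelli's strategy: lift the convex function $\varphi=-\log f$ to a convex body in $\R^{n+1}$, apply a weighted first-variation formula for Minkowski sums of convex bodies, and then remove any compactness assumption by approximation. The hypotheses $\varphi(o),\psi(o)<+\infty$ are used to keep the origin in the interiors of $K_f$ and $K_g$, which, together with the compactness of $K_g$, controls both the weight $|x|^{q-n}$ near $o$ and the various boundary integrands at infinity.

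First assume $K_f$ is compact. For $M>\sup_{K_f}\varphi$ set
$$K^{\varphi}_{M} := \{(x,s)\in\Rn\times\R : x\in K_f,\ \varphi(x)\le s\le M\},$$
a convex body in $\R^{n+1}$. By Fubini,
$$\int_{K^{\varphi}_{M}} e^{-s}|x|^{q-n}\,d(x,s) \;=\; \widetilde{V}_q(f)\;-\;e^{-M}\!\int_{K_f}|x|^{q-n}\,dx,$$
so $\widetilde{V}_q(f)$ is recovered as a weighted $(n+1)$-volume on $K^{\varphi}_{M}$ up to a term vanishing as $M\to\infty$. The Asplund sum lifts to Minkowski addition via the inf-convolution identity $(\varphi^*+t\psi^*)^*=\varphi\,\square\,(t\cdot\psi)$, which reduces the limit in the statement (up to the $e^{-M}$ correction) to a Minkowski difference quotient for the bodies $K^{\varphi}_{M}$ and $K^{\psi}_{M}$ with weight $w(x,s)=e^{-s}|x|^{q-n}$.

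Next I would invoke the weighted mixed-volume formula
$$\lim_{t\to 0^+}\frac{1}{t}\left(\int_{K+tL}w\,dy-\int_{K}w\,dy\right) = \int_{\partial K} h_L(\nu_K)\,w\,d\mathcal{H}^n$$
and decompose $\partial K^{\varphi}_{M}$ into the graph of $\varphi$ over $K_f$, the lateral part over $\partial K_f$, and the top face $K_f\times\{M\}$. On the graph the outer unit normal is $(\nabla\varphi(x),-1)/\sqrt{1+|\nabla\varphi(x)|^2}$ with area element $\sqrt{1+|\nabla\varphi(x)|^2}\,dx$; combined with $h_{K^{\psi}_{M}}(\nabla\varphi(x),-1)\to\psi^*(\nabla\varphi(x))$ as $M\to\infty$, this gives the graph contribution $\int_{K_f}\psi^*(\nabla\varphi)\,f\,|x|^{q-n}\,dx$. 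On the lateral part the normal is $(\nu_{K_f}(x),0)$ and $h_{K^{\psi}_{M}}(\nu_{K_f},0)=h_{K_g}(\nu_{K_f})$, producing $\int_{\partial K_f}h_{K_g}(\nu_{K_f})\,f\,|x|^{q-n}\,d\mathcal{H}^{n-1}$; the top face contributes $O(e^{-M})$ and disappears.

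Finally, to drop compactness of $K_f$ I would take an increasing sequence of log-concave truncations $f_R\in\LC$ with $K_{f_R}$ compact and $f_R\uparrow f$, apply the previous step to each $f_R\oplus t\bullet g$, and pass to $R\to\infty$. The main obstacle is this last step: one must justify exchanging the limits $R\to\infty$ and $t\to 0^+$ by a uniform-in-$t$ bound on the difference quotient $t^{-1}(\widetilde{V}_q(f_R\oplus t\bullet g)-\widetilde{V}_q(f_R))$. The weight $|x|^{q-n}$ is singular at the origin when $q<n$ and grows at infinity when $q>n$, so it has to be balanced against the exponential decay of $f$, the Lipschitz control of $\varphi$ near $o$ (from $\varphi(o)<+\infty$), and the boundedness of $h_{K_g}$ (from $K_g$ compact). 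The geometric-lift picture of step~2 is what makes this tractable, because the $R$-limit then amounts to Hausdorff continuity of convex-body functionals applied to $K^{\varphi_R}_{M}$.
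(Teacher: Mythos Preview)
The paper does not prove this theorem. Theorem~\ref{HLXZ-Theo1} is quoted in the introduction as a known result from the literature---attributed to Ulivelli \cite{Ulivelli}, who removed the extra regularity condition~\eqref{addition-at-o} from the version in \cite[Theorem~1.3]{HLXZ}---and serves only as motivation. The paper's own variational analysis is for the Riesz $\alpha$-energy $\Ia$, not for $\widetilde V_q$, so there is no in-paper argument to compare your proposal against.

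That said, your outline matches precisely what the paper describes as Ulivelli's method: lift $\varphi$ to a convex body $K^{\varphi}\subset\R^{n+1}$, use a geometric first-variation formula for Minkowski sums, split the boundary into graph, lateral, and top pieces, and then pass from compact $\dom(\varphi)$ to the general case by approximation. A few points in your write-up would still need work before it becomes a proof. First, $\varphi(o)<+\infty$ only gives $o\in K_f$, not $o\in\mathrm{int}(K_f)$; your opening sentence overstates the hypothesis, and with $o\in\partial K_f$ the weight $|x|^{q-n}$ can sit on the lateral boundary. Second, the weighted first-variation identity you invoke is usually stated for continuous weights, and here $w(x,s)=e^{-s}|x|^{q-n}$ is singular along $\{o\}\times\R$ inside $K^{\varphi}_M$ when $q<n$; this needs justification. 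Third---and you flag this yourself---the exchange of $R\to\infty$ and $t\to0^+$ is the crux, but the remark that it ``amounts to Hausdorff continuity'' does not close it: when $K_f$ is unbounded the truncations $K^{\varphi_R}_M$ do not converge in the Hausdorff metric, so one needs a tailored monotonicity or dominated-convergence argument on the difference quotient, which is where the real work in \cite{Ulivelli} lies.
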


Notice that, in general, the conditions on $f, g$ in the variational formulas \cite{FYZZ,HLXZ, Ulivelli} may not cover the case when $g=f$
; while the variational formulas in \cite{CF13, 
FXY20+} require high regularities although do cover the case $g=  f$.  A major goal in this paper is to find arguably more suitable conditions on $f, g\in \LC$ so that the related variational formulas can cover the case $g=  f$ but also require less  or even no regularities. In order to fulfill the goal, let us take a look at how the geometric variational formulas for convex bodies work in e.g., \cite{GHWXY19, HLYZ16, LYZ18, XY17}. When the radial functions of convex bodies are involved, one always uses the perturbation on a convex  body $K$ containing the origin $o$ in its interior: $f_t(u)=h_K(u)+t\mathfrak{g}(u)$ for $u\in S^{n-1}$, where $h_K(u)=\max_{x\in K}\langle x, u\rangle$ is the support function of $K$, $\mathfrak{g}$ is a continuous function on $\sphere$ and $t$ is close to $0$. Note that the support function $h_K$ is  positive  on $\sphere$, and hence a constant $a>0$ can be found such that  \begin{align}
   -\infty <\inf_{u\in \sphere} \mathfrak{g}(u)\leq \mathfrak{g}(u) \leq a h_K(u) \quad  \text{for all}\   u\in \sphere. \label{compare-K-g}
\end{align}  When $\mathfrak{g}=h_L$ for some compact convex set $L$,  
the right-most inequality in \eqref{compare-K-g} gives 
\begin{align} \label{KsubL}
L\subseteq a K. \end{align}

Back to the case of log-concave functions, it is natural to have a condition similar to \eqref{compare-K-g} and an assumption similar to $o\in \mathrm{int}(K)$, the interior of $K$. For log-concave function $f=e^{-\varphi}$, its support function $h_f$ is indeed equal to $\varphi^*$. So a natural replacement of \eqref{compare-K-g} is that the log-concave functions  $f=e^{-\varphi}$ and $g=e^{-\psi}$ satisfy the following condition: there exist two constants $\beta_1>0$ and $\beta_2\in\R$ such that \begin{align}
   -\infty <\inf \psi^*\leq \psi^\ast\le \beta_1\varphi^\ast+\beta_2\ \ \ \mathrm{on} \ \ \R^n. \label{nat-condition-1} 
\end{align} (The need of $\beta_2$ is due to the nature of log-concave functions: $f=e^{-(\varphi+a)} =e^{-a}e^{-\varphi}$.) We take use of $o\in \mathrm{int}(K_f)$ to control the position of $o$, implying that $e^{-\varphi^*}$ is also belonging to $\LC$. As one can see in later context, this is a natural assumption. In particular, the upper bound in \eqref{nat-condition-1} yields  $\psi\geq   \varphi \beta_1-\beta_2$ (where $(\varphi\beta_1)(x)=\beta_1\varphi(x/\beta_1)$).   
This implies that $\dom (\psi)\subseteq \beta_1\dom (\varphi)$, or equivalently $K_g\subseteq \beta_1K_f$, resembling  condition \eqref{KsubL} for convex bodies. 
Note that  condition \eqref{nat-condition-1} does cover the case when $g=  f$, which follows easily from the facts that $h_f=h_g$ and $\inf h_g=  \inf h_f\geq -\varphi(o)>-\infty.$ 

Let us pause here to comment that, when $o\in \mathrm{int}(K_f)$,  the conditions on log-concave functions in all previously mentioned variational formulas do satisfy  condition \eqref{nat-condition-1}.  For example, if $f, g\in \LC$ satisfy the regularity conditions in  \cite{CF13} (mainly $f, g$ are continuous differentiable) and $g=e^{-\psi}$ is an admissible perturbation of $f\!=\!e^{-\varphi}$, then  $\varphi^\ast\!-\!c_1\psi^\ast$ is  convex  (with $c_1\!>0$ a given constant) and  
\begin{align}\label{A1}(\varphi^*-c_1\psi^*)(y)\geq (\varphi^*-c_1\psi^*)(o)+\langle y, \nabla \varphi^*(o)-c_1\nabla\psi^*(o)\rangle\geq c_2-c_3|y|
\end{align}
where $c_2=(\varphi^*-c_1\psi^*)(o)$ and $c_3=|\nabla \varphi^*(o)-c_1\nabla\psi^*(o)|.$  
As $o\in \mathrm{int}(K_f)$, one gets $e^{-\varphi^*}$ is integrable and hence $\varphi^*(y)\geq c_4|y|+c_5$ for some constants $c_4>0$ and $c_5\in \R$ (see \eqref{In.CF13-11}). This, together with \eqref{A1}, gives $$\psi^*\leq \Big(\frac{c_3+c_4}{c_1c_4}\Big)\varphi^*-\frac{c_3c_5+c_2c_4}{c_1c_4}.$$ That is, the admissible perturbation along with the regularity assumption in \cite{CF13} yields \eqref{nat-condition-1}. On the other hand, when $g\!=\! e^{-\psi}$ is supported on a compact set $E\subset\Rn$, then  $\psi(x)=\infty$ if  $x\notin E$ and   \begin{align*}
\psi^\ast(y) =\sup_{x\in \Rn} \Big\{\langle x, y \rangle -\psi(x) \Big\} =\sup_{x\in E} \Big\{\langle x, y \rangle -\psi(x) \Big\} \leq d_0|y|-\min_{x\in E}\psi(x),\end{align*} where $d_0$ a constant such that $E\subset\{x\in \Rn: |x|\leq d_0\}$. Again using \eqref{In.CF13-11}, one gets that $f$ and $g$ satisfy   condition \eqref{nat-condition-1}. 

 In the main context of this paper, we will use the fundamental Reisz $\alpha$-energy as an example to illustrate how \eqref{nat-condition-1} along with $o\in \mathrm{int}(K_f)$ can be used to establish the variational formula  for log-concave functions.  Our approach can also be used  in \cite{FYZZ,HLXZ} to get rid  of condition \eqref{addition-at-o}.  Although the Reisz $\alpha$-energy can be defined for more general functions, we shall concentrate on the log-concave functions.  Let $f: \Rn\rightarrow [0, \infty)$ be a log-concave function  and $\alpha>0$ be a fixed real number. The Riesz $\alpha$-potential of $f$, denoted by $I_{\alpha}(f,\cdot)$, is given by: for $y\in \Rn$,  \begin{align}
I_{\alpha}(f, y)= \int_{\mathbb{R}^n}\frac{f(x)}{|x-y|^{n-\alpha}}\,dx. \label{Riesz-P-def-1}\end{align} The Riesz $\alpha$-energy of $f$ is formulated by 
\begin{align*} 
\Ia(f)=\int_{\mathbb{R}^n}I_{\alpha}(f,y) f(y)\,dy=\int_{\mathbb{R}^n}\int_{\mathbb{R}^n}\frac{f(x)f(y)}{|x-y|^{n-\alpha}}\,dx\,dy.\end{align*}  The importance of both the Riesz $\alpha$-potential  and  the Riesz $\alpha$-energy can never be overemphasized, because they are fundamental notions in mathematics and related areas. Here we give two examples to explain how the Riesz $\alpha$-potential and Riesz $\alpha$-energy are useful in applications. The first example is $\mathcal{I}_2(f)$ with $f$ being a locally integrable function in $\mathbb{R}^n$. It is known that $\mathcal{I}_2(f)$ is the Coulomb (or Newton) energy of $f$; and when $n=3$, $\mathcal{I}_2(f)$ describes the real physical energy associated with the charge density $f(x)$ (see e.g., \cite{Landkof1972,LiebLoss2001}). The second example is the case when $f=\mathbf{1}_K$ with $K$ being a convex body, i.e., $f(x)=0$ if $x\notin K$ and $f(x)=1$ if $x\in K$, then $\Ia(f)=\Ia(\mathbf{1}_K)$ becomes  the $(\alpha+1)$-chord integral of $K$ up to a multiplicative constant (see e.g.,  \cite[(3.11)]{LXYZ}). Note that the chord integral is a fundamental object and plays an important role in the integral geometry, see e.g.,  \cite{Bernig2011, Bernig2014,   Santalo2004,  Zhang1991} for some important works related to the chord integral. The $(\alpha+1)$-chord integral contains many well-studied objects in convex geometry as its special cases, including the surface area and volume of $K$. 

We define the first order variation of the Riesz $\alpha$-energy of $f$ along $g$ with respect to the Asplund sum as follows. 
\begin{definition} Let $\alpha>0$, $f\in \LC$, and $g: \Rn\to \R$ be a log-concave function.  The first order variation of the Riesz $\alpha$-energy of $f$ along $g$ with respect to the Asplund sum, denoted by $\delta \Ia(f, g)$, is defined by 
\begin{align}\label{first variation}
\delta \Ia(f,g)=\frac{1}{2}\lim_{t\to0^{+}}\frac{ \Ia(f\oplus t\bullet g)-\Ia(f)}{t}, 
\end{align} provided that the above limit exists. 
\end{definition}

In general, it is not easy to find  an explicit integral expression for  $\delta \Ia(f,g)$. However,  one can always calculate $\delta \Ia(f,f)$ precisely, see Propositions \ref{prop 4.3} and \ref{Le.ff-2}. Based on these results, we are able to get the following result. 

\begin{theorem}\label{var.bound.-1}
 Let $\alpha\!>\! 0$,  $f\!=\! e^{-\varphi}\in {\rm LC}_n$ and $g\!=\!e^{-\psi}\in {\rm LC}_n$.  If   \eqref{nat-condition-1} holds and $o\in \mathrm{int}(K_f)$, 
 then
\begin{align*} 
 \delta \Ia(f,g)
=\int_{K_f}\int_{K_f}\frac{\psi^*(\nabla\varphi(x))  f(y)f(x)}{|x-y|^{n-\alpha}}dxdy
+\int_{\partial K_f}\int_{K_f}\frac{ h_{K_g}(\nu_{K_f}(x)) f(y)f(x) }{|x-y|^{n-\alpha}}dyd\mathcal{H}^{n-1}(x).
\end{align*}
\end{theorem}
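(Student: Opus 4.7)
The plan is to exploit the bilinear structure of the Riesz $\alpha$-energy and reduce the problem to a weighted version of Rotem's variational formula (Theorem \ref{Rotem-11}), the weight being the Riesz $\alpha$-potential $I_\alpha(f,\cdot)$ itself. Define the symmetric form
\begin{equation*}
B(u,v):=\int_{\Rn}\int_{\Rn}\frac{u(x)\,v(y)}{|x-y|^{n-\alpha}}\,dx\,dy
\end{equation*}
on nonnegative Borel functions, so that $\Ia(h)=B(h,h)$ and $B(f,h)=\int h(y)\,I_\alpha(f,y)\,dy$. Writing $h_t:=f\oplus t\bullet g$ and $r_t:=h_t-f$, polarization gives
$\Ia(h_t)-\Ia(f)=2B(f,r_t)+B(r_t,r_t)$,
and hence it suffices to show (i) $B(r_t,r_t)=o(t)$ as $t\to 0^+$, and (ii)
\begin{equation*}
\lim_{t\to 0^+}\frac{B(f,r_t)}{t}=\lim_{t\to 0^+}\int_{\Rn}\frac{r_t(y)}{t}\,I_\alpha(f,y)\,dy
\end{equation*}
equals the claimed right-hand side. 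Step (ii) is precisely the first Asplund-sum variation, at $f$ and along $g$, of the weighted total-mass functional $h\mapsto\int h(y)\,I_\alpha(f,y)\,dy$, with $w(y):=I_\alpha(f,y)$ playing the role occupied by $|y|^{q-n}$ in Theorem \ref{HLXZ-Theo1}.

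For step (ii) I would follow the scheme of Ulivelli's proof of Theorem \ref{HLXZ-Theo1}. Representing $\varphi_t:=-\log h_t$ as the infimal convolution $\varphi_t(y)=\inf_u\{\varphi(y-tu)+t\psi(u)\}$, one obtains the pointwise limit $r_t(y)/t\to f(y)\psi^*(\nabla\varphi(y))$ for $y$ in the interior of $K_f$, while the spreading of the support of $h_t$ outside $K_f$ contributes the boundary term, with the Gauss map $\nu_{K_f}$ and the support function $h_{K_g}$ emerging from the variational geometry of the super-level sets of $\varphi$. The weight $w=I_\alpha(f,\cdot)$ is nonnegative, continuous on $\Rn$, locally bounded, and has at worst polynomial growth at infinity (for $\alpha>n$), which is absorbed by the exponential tails of $f\in \LC$. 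Consequently Ulivelli's approximation of $\varphi$ by convex functions with compact domains goes through verbatim with $w$ incorporated as an extra factor. Unravelling $I_\alpha(f,x)=\int_{K_f}f(y)|x-y|^{\alpha-n}\,dy$ in the resulting one-weight formula then produces the two double integrals stated in the theorem.

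The uniform-in-$t$ bounds required for (i) and for the dominated convergence argument in (ii) are exactly where the hypotheses are used. From $\inf\psi^*>-\infty$ and the upper bound in \eqref{nat-condition-1}, Legendre duality gives, for $t>0$ small,
\begin{equation*}
e^{t\inf\psi^*}\,f(x)\;\le\;h_t(x)\;\le\;e^{t\beta_2}\bigl[f(x/(1+t\beta_1))\bigr]^{1+t\beta_1},
\end{equation*}
which delivers an integrable majorant $C\Phi$ for $|r_t|/t$ uniformly on a small interval $t\in(0,t_0]$, and hence the bound $B(r_t,r_t)\le (Ct)^2 B(\Phi,\Phi)=O(t^2)$ that settles (i). The assumption $o\in\mathrm{int}(K_f)$ enters to ensure linear growth of $\varphi^*$ at infinity so that $\Phi$ remains integrable against the additional weight $w$. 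I expect the hardest step to be the precise identification of the boundary contribution in (ii): extracting the factor $h_{K_g}(\nu_{K_f}(x))$ from the inf-convolution structure requires a careful analysis of how $r_t/t$ concentrates on $\partial K_f$. This is essentially the fine geometric content that will be packaged into the authors' Propositions \ref{prop 4.3} and \ref{Le.ff-2}; once available, the polarization reduction above delivers Theorem \ref{var.bound.-1} for general $g$.
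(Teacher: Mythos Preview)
Your polarization idea---writing $\Ia(h_t)-\Ia(f)=2B(f,r_t)+B(r_t,r_t)$ and separating a linear term from a quadratic remainder---is a clean and genuinely different organization from the paper's, but the proof of step~(i) as you state it has a real gap. The pointwise bound $|r_t|/t\le C\Phi$ cannot hold when $K_f\neq\Rn$: for $x\in K_{h_t}\setminus K_f$ one has $f(x)=0$ while $h_t(x)$ is of order~$1$ (e.g.\ when $\varphi$ stays bounded on $\partial K_f$), so $r_t(x)/t\sim 1/t$ there. This is precisely the mechanism by which $r_t/t$ concentrates as a surface density on $\partial K_f$ and produces the boundary integral; it also means the inequality $B(r_t,r_t)\le (Ct)^2 B(\Phi,\Phi)$ is unjustified. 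You yourself flag the boundary contribution as the delicate point in step~(ii), but the same boundary layer simultaneously breaks your majorant argument for step~(i).

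The paper does not attempt such a majorant. Instead it first treats the extremal perturbation $\widehat f_t$ with $\psi^*=\beta_1\varphi^*+\beta_2$ by direct computation: Proposition~\ref{prop 4.3} evaluates $\delta\Ia(f,f)$ via scaling, and Proposition~\ref{Le.ff-2} converts it into the desired integral form through a divergence-theorem/integration-by-parts argument (Lemmas~\ref{Le.div.bound.}--\ref{Le.epsilon.bound.}). With $\delta\Ia(f,\widehat f_t)$ thus known explicitly, Lemma~\ref{Le.Iff-bound.} shows the integrand $E_t$ attached to $\widehat f_t$ satisfies $\int E_t\to\int\lim E_t$; since condition~\eqref{nat-condition-1} with $\psi(o)=0$ gives $0\le F_t\le E_t$ for the general perturbation, the generalized dominated convergence theorem then yields $\int F_t\to\int\lim F_t$. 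In your language, the paper first establishes both~(i) and~(ii) for the special $\widehat f_t$ and then uses that as the dominating envelope. Your framework can be rescued along the same line: once~(ii) holds for $\widehat f_t$, polarization gives $B(\widehat f_t-f,\widehat f_t-f)=o(t)$, and $0\le r_t\le \widehat f_t-f$ then forces $B(r_t,r_t)=o(t)$---but this requires doing the proportional case first, which is exactly the paper's route, not a direct majorant. Note also that the paper explicitly avoids the Ulivelli/super-level-set machinery you invoke for~(ii), working instead with polar coordinates and the radial-function derivative of Lemma~\ref{radial-diff-lem}.
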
 Unlike those in \cite{FYZZ, HLXZ,Rot20, Rotem2022, Ulivelli}, our Theorem \ref{var.bound.-1} will be proved based on analytical tools,  instead of using the geometric variational formulas for convex bodies. When $f=e^{-\varphi}$ satisfies  that $K_f=\Rn$ (equivalently  $\dom \varphi=\{x\in \Rn: \varphi(x)<\infty\}=\Rn$), it is easier to get $\delta\Ia(f,g)$, as the boundary of unbounded convex set $K_f$ is not involved,  that is, in this case,
    \begin{align*}
        \delta \Ia(f,g)
=\int_{\mathbb{R}^n}\int_{\mathbb{R}^n}\frac{\psi^\ast(\nabla\varphi(y))f(x)f(y)}{|x-y|^{n-\alpha}}dxdy.
    \end{align*}
Indeed, using the translation invariance of $\Ia(f)$, we can  get the following result.  
\begin{corollary}\label{var.bound.}
Let $\alpha>0$,  $f=e^{-\varphi}\in {\rm LC}_n$ and $g=e^{-\psi}\in {\rm LC}_n$.  If $\varphi$ and $\psi$ satisfy \eqref{nat-condition-1}  and $\mathrm{int}(K_f)\neq \emptyset$, then  the integral formula in Theorem  \ref{var.bound.-1} still holds. 
 
\end{corollary}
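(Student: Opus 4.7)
The plan is to reduce Corollary~\ref{var.bound.} to Theorem~\ref{var.bound.-1} by a translation that moves a chosen interior point of $K_f$ to the origin, exploiting the translation invariance of the Riesz $\alpha$-energy. Pick any $x_0 \in \mathrm{int}(K_f)$ and set $\tilde f(x) = f(x+x_0)$, $\tilde\varphi(x) = \varphi(x+x_0)$. Then $K_{\tilde f} = K_f - x_0$ contains $o$ in its interior, and a one-line computation from \eqref{def-dual} gives $\tilde\varphi^*(y) = \varphi^*(y) - \langle x_0, y\rangle$.

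First I show the first-order variation is unchanged by this translation. The substitution $(u,v) = (x+x_0, y+x_0)$ in \eqref{Riesz-P-def-1} and the double integral defining $\Ia$ yields $\Ia(h) = \Ia(h(\cdot+x_0))$ for every $h \in \LC$. Next, using $\tilde\varphi^* = \varphi^* - \langle x_0, \cdot\rangle$ one computes directly that $(\tilde\varphi^* + t\psi^*)^*(x) = (\varphi^* + t\psi^*)^*(x+x_0)$, so $\tilde f \oplus t\bullet g$ is precisely the translate of $f \oplus t\bullet g$ by $-x_0$. Combining these two facts with \eqref{first variation} gives $\delta \Ia(\tilde f, g) = \delta \Ia(f, g)$.

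Next I verify that $\tilde f$ and $g$ still satisfy \eqref{nat-condition-1}. The bound $\inf \psi^* > -\infty$ is intrinsic to $g$ and untouched. For the upper bound, since $o \in \mathrm{int}(K_{\tilde f})$, the integrability argument recalled in the paragraph following \eqref{nat-condition-1} supplies constants $c_4 > 0$ and $c_5 \in \R$ with $\tilde\varphi^*(y) \geq c_4|y| + c_5$. Rewriting the original hypothesis as $\psi^* \leq \beta_1 \varphi^* + \beta_2 = \beta_1 \tilde\varphi^* + \beta_1 \langle x_0, \cdot\rangle + \beta_2$ and absorbing the linear perturbation via $\beta_1|\langle x_0, y\rangle| \leq (\beta_1|x_0|/c_4)(\tilde\varphi^*(y) - c_5)$ produces new constants $\tilde\beta_1 > 0$, $\tilde\beta_2 \in \R$ such that $\psi^* \leq \tilde\beta_1 \tilde\varphi^* + \tilde\beta_2$. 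Thus the pair $\tilde f, g$ meets the hypotheses of Theorem~\ref{var.bound.-1}.

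Finally, applying Theorem~\ref{var.bound.-1} to $\tilde f$ and $g$ and substituting $u = x+x_0$, $v = y+x_0$ in the resulting integrals sends $K_{\tilde f}$ and $\partial K_{\tilde f}$ back to $K_f$ and $\partial K_f$ respectively; combined with $\nabla \tilde\varphi(x) = \nabla\varphi(x+x_0)$ and the translation invariance of the outer unit normal, $\nu_{K_{\tilde f}}(x) = \nu_{K_f}(x+x_0)$, the transformed expression is literally the integral formula stated in Theorem~\ref{var.bound.-1} for the original $f$. The only substantive point in this plan is the third step, the preservation of \eqref{nat-condition-1} under the shift; everything else is translation bookkeeping that relies only on the definitions of $\Ia$, the Asplund sum, and the Legendre transform.
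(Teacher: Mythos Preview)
Your proof is correct and follows essentially the same approach as the paper: translate $f$ so that the origin lies in the interior of its support, verify that condition \eqref{nat-condition-1} persists for the translated pair (using the linear lower bound on $\tilde\varphi^*$ coming from $o\in\mathrm{int}(K_{\tilde f})$), apply Theorem~\ref{var.bound.-1}, and undo the translation in both $\delta\Ia$ and the integral terms. The only differences from the paper are the order in which you present the steps and notation.
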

 
As usual, the two integral terms in Theorem \ref{var.bound.-1} define two measures. The second integral in Theorem \ref{var.bound.-1} generates $\Rma^s(f, u)$, a Borel measure on $\sphere$ as follows: 
\begin{align*}
 \int_{\sphere}\!\! \mathbf{g}(u)d\Rma^s(f, u)\!=\!\!\! \int_{\partial K_f}\!\int_{K_f}\!\!\!\!\frac{\mathbf{g}(\nu_{K_f}\!(x)) f(y)f(x) }{|x-y|^{n-\alpha}}dyd\mathcal{H}^{n-1}(x) \! =\!\! \!\int_{\partial K_f}\!\! \!\mathbf{g}(\nu_{K_f}\!(x)) I_{\alpha}(f,x)f(x)d\mathcal{H}^{n-1}(x), 
\end{align*} for each Borel function $\mathbf{g}:\sphere \to\R$ such that $\mathbf{g}\in\mathcal{L}^1(\Rma^s(f, \cdot))$ or $\mathbf{g}$ is non-negative.

 Of more important for our paper in Section \ref{section:-6} is the Borel measure $\Rma(f,\cdot)$ defined on $\Rn$, the 
Riesz $\alpha$-energy measure of $f$ induced by the first integral in Theorem \ref{var.bound.-1}. Namely,  
\begin{align}
\int_{\mathbb{R}^n} \mathbf{h}(z)d\Rma(f,z)=\int_{K_f}\int_{K_f} \frac{\mathbf{h}(\nabla\varphi(y))f(x)f(y)}{|x-y|^{n-\alpha}}dxdy, \label{def-Riesz-measure-1}
\end{align}
for each Borel function $\mathbf{h}:\mathbb{R}^n\to\mathbb{R}$ such that $\mathbf{h}\in\mathcal{L}^1(\Rma(f,\cdot))$ or $\mathbf{h}$ is non-negative. According to \eqref{Riesz-P-def-1} and \eqref{def-Riesz-measure-1}, it follows from the Tonolli-Fubini theorem that \begin{align}
\int_{\mathbb{R}^n}\mathbf{h}(z)d\Rma(f,z)&= \int_{K_f}\left(\int_{K_f} \frac{\mathbf{h}(\nabla\varphi(y))f(x)f(y)}{|x-y|^{n-\alpha}}dx\right) dy \nonumber \\&=\int_{K_f} \left(\int_{K_f} \frac{f(x)}{|x-y|^{n-\alpha}}dx\right) \mathbf{h}(\nabla\varphi(y)) f(y) dy \nonumber\\&=\int_{K_f}  \mathbf{h}(\nabla\varphi(y)) I_{\alpha}(f,y)  f(y)dy.\label{def-Rma-22-1}
\end{align} That is, the Riesz $\alpha$-energy measure of $f$ is the push-forward of the measure $f(y)I_{\alpha}(f,y)  dy$ under $\nabla \varphi.$  A nature question to study is   the following 
 Riesz $\alpha$-energy Minkowski problem aiming to characterize the  Riesz $\alpha$-energy measure of log-concave functions. 

\begin{problem}[{\bf The Riesz $\alpha$-energy Minkowski problem}] \label{Chord-Mink-Pro-1}
 Let $\mu$ be a finite Borel measure on $\Rn$ and $\alpha>0$. Find the necessary and/or sufficient conditions on $\mu$ so that $
    \mu=\Rma(f,\cdot)$
holds for some log-concave function $f\in\LC$. 
\end{problem}

When the function $\varphi$ is  smooth enough, then 
\eqref{def-Rma-22-1} and $z=\nabla \varphi(y)$ give  \begin{align}\label{RN}
\frac{\,d\Rma(f, z)}{dz}=I_{\alpha}(f,\nabla \varphi^*(z)) f(\nabla \varphi^*(z))\det (\nabla^2\varphi^*(z)). 
\end{align} If the measure $\mu$ has a sufficiently smooth density function $\mathbf{h}$ on $\Rn$, Problem \ref{Chord-Mink-Pro-1} reduces to the following Monge-Amp\`{e}re type partial differential equation: 
\begin{align}
\mathbf{h}(z)=I_{\alpha}(f,\nabla \varphi^*(z)) f(\nabla \varphi^*(z))\det (\nabla^2\varphi^*(z)), \label{M-A-a-R-e}
\end{align} where $f=e^{-\varphi}$ is the unknown function.

The Minkowski type problems play central roles in (convex) geometry. Such problems aim to find sufficient and/or necessary conditions on a pre-given measure $\nu$ (defined on $\sphere$) so that $\nu$ is equal to a Borel measure $\mathfrak{m}(K, \cdot)$   generated by a variational formula related to some continuous functionals defined on convex bodies in terms of certain  perturbations of $K$. For example, if $\mathfrak{m}(K, \cdot)$ equals the surface area measure of $K$ (obtained by the variational formula of volume in terms of Minkowski addition), then one gets the classical Minkowski problem back to Minkowski \cite{Minkowski1987, Minkowski1903}. See e.g., \cite{GHWXY19, HLYZ2010, HLYZ16, KL2023,Lut93, LYZ18, XY17} for other extensively studied Minkowski type problems. The geometric Minkowski problem closely related to the Riesz $\alpha$-energy is the recent chord Minkowski problem, initiated by  Lutwak, Xi, Yang and Zhang in  \cite{LXYZ}, where the authors established a variational formula of chord integral and defined a new family of Borel measures: chord measures. The Minkowski type problem for chord measures has already received a lot of attention, see e.g.,  \cite{GXZ, HHL2023, HHLW, HQ,YLi24-1,YLi24-2,Qin2023,Qin2024,XYZZ}.

The study of the Minkowski problem for log-concave functions starts from the seminal work \cite{CK2015}, where Cordero-Erausquin and Klartag proposed the functional Minkowski problem aiming to characterize the surface area measure $\mu_f$ defined in \eqref{surface-a-f}. In particular, Cordero-Erausquin and Klartag proved the existence and uniqueness of solutions to the functional Minkowski problem. Please see \cite{CF13} for a more complete setting of the functional Minkowski problem. Around the same time, Rotem in \cite{Rot20} and Fang, Xing and Ye in \cite{FXY20+} provided solutions to the functional $L_p$ Minkowski problem for $p\in (0, 1)$, and respectively, for $p>1$. A first step towards the functional dual Minkowski problem was given in \cite{HLXZ} and was pushed forward to the functional dual Orlicz Minkowski problem in \cite{FYZZ}. All of the above mentioned solutions to the functional Minkowski type problems do provide weak solutions to their corresponding Monge-Amp\`{e}re equations. These further enhance the already existing close connections between the Minkowski problems and partial differential equations. 

Our second main result in this paper is the existence of solutions to the Riesz $\alpha$-energy Minkowski problem when the given measure $\mu$ is even. In particular,  Theorem \ref{Solution-a-R-e} provides a weak solution to the Monge-Amp\`{e}re equation \eqref{M-A-a-R-e}. 

\begin{theorem}\label{Solution-a-R-e}
Let $\alpha>0$. Suppose that $\mu$ is an even nonzero finite Borel measures on $\R^n$  satisfying that $\mu$ is not concentrated in any subspaces and the first moment of $\mu$ is finite. There exists a log-concave function $f\in \LC$ with $\Ia(f)=\int_{\Rn}\, \,d\mu$, such that, $
   \mu=\Rma(f,\cdot). $
\end{theorem}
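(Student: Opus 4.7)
The plan is to use the direct method of calculus of variations, following the template of \cite{CK2015, FYZZ, HLXZ}: design a scale-invariant functional on even log-concave functions whose Euler-Lagrange equation, computed via Theorem \ref{var.bound.-1}, encodes exactly $\mu = \Rma(f, \cdot)$. For $f = e^{-\varphi} \in \LC$ even, set
\[\Phi(f) = \int_{\R^n} \varphi^{*} \, d\mu - \frac{\mu(\R^n)}{2} \log \Ia(f).\]
Using $(-\log(\lambda f))^{*} = \varphi^{*} + \log\lambda$ and $\Ia(\lambda f) = \lambda^{2} \Ia(f)$, one checks $\Phi(\lambda f) = \Phi(f)$ for every $\lambda > 0$. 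So one may normalize the eventual minimizer to satisfy $\Ia(f) = \mu(\R^n)$, and the problem reduces to minimizing the linear functional $\int \varphi^{*}\, d\mu$ over the class of even log-concave functions with this prescribed Riesz energy.

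Next I would prove existence of a minimizer. Take a minimizing sequence $\{f_{k} = e^{-\varphi_{k}}\}$. The hypothesis that $\mu$ is not concentrated in any proper subspace prevents $\varphi_{k}^{*}$ from degenerating (being finite only on a proper subspace in the limit), while the finite first moment of $\mu$ controls $\int \varphi_{k}^{*}\, d\mu$ against the linear growth of $\varphi_{k}^{*}$ in every direction. Together with evenness, these bounds yield locally uniform bounds on the sequence $\{\varphi_{k}\}$, and a Blaschke-type selection theorem for even convex functions produces a subsequence $\varphi_{k} \to \varphi_{0}$ converging locally uniformly on $\mathrm{int}(\dom \varphi_{0})$. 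Lower semicontinuity of $\int \varphi^{*}\, d\mu$ (Fatou) together with continuity of $\Ia$ along this convergence (dominated convergence, using the uniform log-concave tail decay) show that $f_{0} = e^{-\varphi_{0}}$ is admissible and attains the infimum.

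I would then derive the Euler-Lagrange equation. Pick a test function $g = e^{-\psi} \in \LC$ with $K_{g}$ compact and even, chosen so that condition \eqref{nat-condition-1} holds for the pair $(f_{0}, g)$; since $f_{0}$ is nondegenerate and even, $o \in \mathrm{int}(K_{f_{0}})$, so Theorem \ref{var.bound.-1} applies. Differentiating $t \mapsto \Phi(f_{0} \oplus t \bullet g)$ at $t = 0^{+}$ and using the normalization $\Ia(f_{0}) = \mu(\R^n)$ gives
\[\int \psi^{*} \, d\mu \,\geq\, \int \psi^{*} \, d\Rma(f_{0}, \cdot) \,+\, \int_{\sphere} h_{K_{g}} \, d\Rma^{s}(f_{0}, \cdot),\]
with the reverse inequality following from a symmetric, admissible two-sided perturbation. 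Varying $g$ through a sufficiently rich family of test functions and showing that the minimizer forces $K_{f_{0}} = \R^n$ (so that the boundary measure $\Rma^{s}(f_{0}, \cdot)$ vanishes), one concludes $\mu = \Rma(f_{0}, \cdot)$ as Borel measures on $\R^n$.

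The hard part will be the a priori bounds in the existence step: ruling out the two modes of degeneration of the minimizing sequence, namely $K_{f_{k}}$ collapsing toward a proper subspace and $\varphi_{k}(o)$ escaping to $\pm\infty$ once the normalization $\Ia(f_{k}) = \mu(\R^n)$ is imposed. This is exactly where the non-concentration and finite first moment assumptions on $\mu$ must be used quantitatively. A closely related subtlety is to verify that the minimizer itself has $K_{f_{0}} = \R^n$; without this, one would only recover $\mu = \Rma(f_{0}, \cdot)$ modulo the unwanted spherical boundary measure $\Rma^{s}(f_{0}, \cdot)$.
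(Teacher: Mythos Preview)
Your overall strategy---direct method of calculus of variations on a scale-invariant functional, with the non-concentration and first-moment hypotheses providing compactness---matches the paper's. The paper formulates it as a constrained problem (minimize $\int \varphi\, d\mu$ over even nonnegative convex $\varphi$ with $\Ia(e^{-\varphi^*})\geq\tau$), which is equivalent to your unconstrained $\Phi$ via the scale invariance; the existence step is essentially as you describe.

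The genuine gap is in your Euler--Lagrange step. You propose to perturb the minimizer via the Asplund sum $f_0\oplus t\bullet g$ and invoke Theorem~\ref{var.bound.-1}. This creates two difficulties you flag but do not resolve: Theorem~\ref{var.bound.-1} gives only a one-sided derivative at $t=0^+$ (there is no ``symmetric two-sided'' Asplund perturbation), and it produces the boundary term $\Rma^s(f_0,\cdot)$, which you would have to kill by proving $K_{f_0}=\R^n$. The paper neither proves nor needs this last fact.

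Instead, the paper bypasses Theorem~\ref{var.bound.-1} entirely at this stage. Writing the minimizer as $f_0=e^{-\varphi_0^*}$, it perturbs the \emph{dual} potential additively, $\varphi_0\mapsto\varphi_0+t\zeta$, with $\zeta$ an arbitrary even, compactly supported, continuous function (not required to be of the form $\psi^*$). A short dominated-convergence argument (the paper's Lemma preceding the proof) gives the \emph{two-sided} derivative
\[
\frac{d}{dt}\Big|_{t=0}\Ia\big(e^{-(\varphi_0+t\zeta)^*}\big)=2\int_{\R^n}\zeta(\nabla\varphi_0^*(x))\,I_\alpha(e^{-\varphi_0^*},x)\,e^{-\varphi_0^*(x)}\,dx,
\]
with no boundary term. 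Since the class of such $\zeta$ is rich enough to separate measures, the Euler--Lagrange equation $\mu=\Rma(f_0,\cdot)$ follows immediately. Switching to this perturbation is the fix.
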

 
Our paper is structured as follows. In Section \ref{section:-2}, we present some basic notations and background information that are needed. Some useful properties of both Riesz $\alpha$-potential and  Riesz $\alpha$-energy of log-concave functions are explained in Section \ref{Section:-3}. These properties include the homogeneity, translation-invariance, monotonocity, and finiteness of some integrals related to the Riesz $\alpha$-potential and Riesz $\alpha$-energy. Section \ref{nobound.} aims to find the explicit integral expression of $\delta \Ia(f,g)$ when $f=e^{-\varphi}$ and $g=e^{-\psi}$ satisfying that $\varphi^*$ and $\psi^*$ are proportional to each other (i.e., the case when $g=\tau_0\bullet f$ for some constant $\tau_0>0$). Section \ref{Section:-5} is dedicated to proving the general variational formulas for the Riesz $\alpha$-energy, i.e., Theorem  \ref{var.bound.-1}. Our main result Theorem \ref{Solution-a-R-e}, which provides a solution to the Riesz $\alpha$-energy Minkowski problem (i.e., Problem \ref{Chord-Mink-Pro-1}), is provided in Section \ref{section:-6}. 

\section{Preliminaries and notations}\label{section:-2}
We now introduce basic background for convex and log-concave functions. We refer readers to the book \cite{Roc70} for more details on the theory of convex functions, and the book \cite{Sch14} for more details on the geometric theory of convex sets.  

Let $\Rn$ be the $n$-dimensional, $n\geq 1$, Euclidean space. The letter $o$ denotes the origin of $\Rn$. Throughout, let  $\varphi: \Rn\rightarrow \R \cup\{+\infty\}$ be a convex function. 
The effective domain of $\varphi$ is $$\dom (\varphi)=\{x\in\mathbb{R}^n\,:\, \varphi(x)<+\infty\},$$
which is a convex set in $\R^n$. If $\dom (\varphi)\neq \emptyset$, then $\varphi$ is called a proper function. Hereafter, we say  a set $E\subset\Rn$ is convex if  $\lambda x+(1-\lambda)y\in E$ holds for all $x, y\in E$ and $\lambda\in [0, 1].$ The closure of $E\subset \Rn$ is denoted by $\overline{E}$. Thus, $\overline{\dom(\varphi)}$ is a closed convex set in $\Rn$. A function $f: \Rn\rightarrow [0, \infty)$ is called a log-concave function if $f=e^{-\varphi}$ for some convex function $\varphi$. If $x\in \dom(\varphi)$, then $f=e^{-\varphi}>0$. Hence the function $f=e^{-\varphi}$ is supported on $\overline{\dom(\varphi)}$. For convenience, we always let $K_f=\overline{\dom(\varphi)}$ to mean the closure of the support of a log-concave function $f$. That is, $$K_f = \overline{\{x\in \R^n: f(x)\neq 0\}}=\overline{\{x\in\mathbb{R}^n\,:\, \varphi(x)<+\infty\}}.$$ For a closed convex set $E\subset \Rn$, we always use $\partial E$ to mean the boundary of $E$, and $\nu_E$ to mean the Gauss map on $\partial E$ which maps subsets of $\partial E$ to subsets of the unit sphere $\sphere=\{x\in \Rn: |x|=1\}$ with $|x|$ being the Euclidean norm of $x\in \Rn.$ We use  $\langle x, y\rangle $ for the inner product of $x,y\in \R^n$.  We write   $B_2^n(y, R)$
 for the Euclidean ball centered at $y$ with radius $R.$

Note that for a closed convex set $E\subset \Rn$,  its Gauss map $\nu_E$ is defined almost everywhere on $\partial E$ with respect to $\mathcal{H}^{n-1}\big|_{\partial E}$, the $(n-1)$-dimensional Hausdorff measure of $\partial E.$ Throughout the paper, whenever there is no confusion on the set $F\subset \Rn$, we always write $\mathcal{H}^{n-1}\big|_F$ by $\mathcal{H}^{n-1}$. In particular, we will write $du$ for $d\mathcal{H}^{n-1}(u)\big|_{\sphere}$ for convenience. Since $K_f$ is a closed convex set in $\Rn$,   $\nu_{K_f}$  is defined almost everywhere on $\partial K_f$ with respect to $\mathcal{H}^{n-1}\big|_{K_f}$. Clearly, a log-concave function $f$ is differentiable  almost everywhere on $\R^n$ with respect to the Lebesgue measure.

By $\LC$, we mean the set of upper semi-continuous log-concave functions with nonzero finite $\mathcal{L}^1$ norm. That is, if $f\in \LC$, then $f=e^{-\varphi}$ with $\varphi$ a lower-semicontinuous convex function such that $0<\int_{\Rn} f\,dx<\infty$ where $\,dx$ denotes the Lebesgure measure on $\Rn.$ 
We will often use the following well-known and easily established fact (see e.g., \cite[Lemma 2.5]{CF13}):  a log-concave function $f=e^{-\varphi}$ has finite  $\mathcal{L}^1$ norm if and only if
 \begin{equation*}
\liminf_{|x|\rightarrow \infty} \frac{\varphi(x)}{|x|}>0.
 \end{equation*}   In particular,  there exist constants $b>0$ and $c\in \R$ such that \begin{align} \label{In.CF13-11} \varphi(x)\geq b|x|+c. \end{align}

The operations on log-concave functions considered in this paper are the Asplund sum and  the scalar multiplication.  Let $t>0$  be a constant and $f=e^{-\varphi}, g=e^{-\psi}: \Rn\rightarrow [0, \infty)$ be two log-concave functions. The  Asplund sum  of $f$ and $g$, denoted by $f\oplus g$, and the scalar multiplication of $t$ and $f$, denoted by $t \bullet f$, are defined by  
\begin{equation}\label{def-asp-sum}
	(f\oplus g) = e^{-(\varphi^*+\psi^*)^*} \ \ \ \mathrm{and} \ \
	t\bullet f = e^{-(t\varphi^*)^*},
\end{equation} where $\phi^*$ is the Legendre transform of $\phi:\R^n\to\R \cup \{+\infty\}$ defined in  \eqref{def-dual}; namely for $y\in\mathbb{R}^n$, $$\phi^*(y)=\sup_{x\in\mathbb{R}^n}\left\{\langle x,y\rangle-\phi(x)\right\}.$$  It follows from \eqref{def-dual} that, for any function $\phi:\R^n\to\R\cup \{+\infty\}$ (not necessarily convex),  $\phi^*$ is always a lower semi-continuous convex function, and $(\phi^*)^*=\phi^{**}\leq\phi$ with equality if and only if $\phi$ is a lower semi-continuous convex function.  For $c>0$ and $a\in \R$, one has
\begin{equation*}
\label{Eq.(avarphi)*} (\phi+a)^* = \phi^*-a \ \ \mathrm{and} \ \ 
	(c\phi)^* = c \phi^*(\cdot/c).
\end{equation*} If $\phi(x_0)< \infty$ for some $x_0\in \Rn$, then for any $y\in \Rn$, $\phi^*(y)\geq \langle x_0, y\rangle-\phi(x_0)>-\infty$. Moreover, \begin{align*}
\label{equ-dual-1} \phi(x)+ \phi^*(y)\geq \langle x,y\rangle \ \ \ \mathrm{for\ any}\ \ x, y\in\mathbb{R}^n.
\end{align*} It is also a direct consequence from \eqref{def-dual} that  $\phi_1^*\leq \phi_2^*$ if $\phi_1\geq \phi_2. $ By \eqref{def-asp-sum}, for log-concave functions $f=e^{-\varphi}, g=e^{-\psi}$, and $t,s>0$, 
\begin{equation} \label{addition-minkowski-relation}
	(t\bullet f\oplus s\bullet g)=e^{-(t\varphi^*+s\psi^*)^*}.
\end{equation} The one we often use is the special case $(f\oplus t\bullet g)=e^{-(\varphi^*+t\psi^*)^*}.$

The Asplund sum of log-concave functions has strong connections with the Minkowski sum of sets. To see this, we need two special functions defined on a subset $E\subset \mathbb{R}^n$: the indicator function of $E$ denoted by  $\mathbf{1}_E$ and the characteristic function of $E$ denoted by $\chi_{E}=-\log \mathbf{1}_E$. That is, $\mathbf{1}_E(x)=1$ and so $\chi_E(x)=0$ if $x\in E$;  and $\mathbf{1}_E(x)=0$ and so $\chi_E(x)=+\infty$ if $x\notin E$. We say $K\subset\Rn$ a convex body if $K$ is a convex compact set with nonempty interior. Denote by $\mathcal{K}^n$ the set of all convex bodies. Then, for $K\in \mathcal{K}^n,$ a simple calculation based on \eqref{def-dual} gives: for $y\in \Rn,$  
\begin{align}
    (\chi_K)^*(y)=h_K(y):=\sup\{\langle x, y\rangle: x\in K\}. \label{relation-supp-dual}
\end{align} Hereafter the function $h_K: \Rn\rightarrow \R$ is called the support function of $K.$ Let us pause here to mention that the support function can be defined for general convex sets (for example unbounded  closed convex compact sets, or convex compact sets with empty interiors). Clearly, $h_{tK}=th_K$ if $t>0$.  Now if  $f=\mathbf{1}_K$ and $g=\mathbf{1}_L$ for $K, L\in \mathcal{K}^n,$ then  \eqref{addition-minkowski-relation}  and  \eqref{relation-supp-dual}  imply that  \begin{equation*}  
	(t\bullet f\oplus s\bullet g)=e^{-(t\chi_K^*+s\chi_L^*)^*}=e^{-(th_K+sh_L)^*}=e^{-(h_{tK+sL})^*}=e^{-\chi_{tK+sL}}=\mathbf{1}_{tK+sL},
\end{equation*} where $tK+sL=\{tx+sy: x\in K, y\in L\}$ is the Minkowski addition of $K$ and $L$ with coefficients $t, s>0$. Note that $h_{tK+sL}=th_K+sh_L.$ In other words, the Asplund sum reduces to the Minkowski sum of convex bodies when $f$ and $g$ are the indicator functions of convex bodies. We would like to mention that 
$\varphi^*$ for a convex function $\varphi$ resembles 
the support function $h_K$ for convex body, and  indeed  $\varphi^*$  is  the support function of $f=e^{-\varphi}$ and often denoted by $h_f$ (i.e., $h_f=\varphi^*$), see e.g., \cite{AM11,Roc70} for more details. 

For $E\subset \Rn$, its interior is denoted by $\mathrm{int}(E)$. Denote by $\nabla \varphi$ the gradient of $\varphi$ whenever it exists. A well-known fact is that, if $\varphi: \Rn \rightarrow \R\cup\{+\infty\}$ is a convex function, then $\varphi$ is differentiable almost everywhere in $\mathrm {int}(\dom(\phi))$ with respect to Lebesgue measure. 
If $\nabla \varphi(x)$ exists, then
\begin{equation}
	\label{basicequ}
	\varphi(x)+\varphi^*(\nabla\varphi(x)) = \langle x, \nabla \varphi(x)\rangle.
\end{equation}
We will need  the following lemma proved in \cite[Proposition 2.1]{Rot20}. 
\begin{lemma}\label{rotem-111}
Let $\varphi, \mathfrak{g}:\mathbb{R}^n\to\mathbb{R}\cup\{+\infty\}$ be lower semi-continuous functions. Assume that $\mathfrak{g}$ is bounded from below and that $\varphi(o),\mathfrak{g}(o)<\infty$. Then at every point $x\in\mathbb{R}^n$ where $\varphi^*$ is differentiable, we have
\begin{eqnarray*}\label{Rot-000}
\frac{d}{dt}\Big|_{t=0^+}(\varphi+t\mathfrak{g})^*(x)=-\mathfrak{g}(\nabla \varphi^*(x)) .
\end{eqnarray*}
\end{lemma}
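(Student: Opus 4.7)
The plan is to analyze the function $F(t) := (\varphi+t\mathfrak{g})^*(x) = \sup_{y\in\R^n}\{\langle x,y\rangle - \varphi(y) - t\mathfrak{g}(y)\}$, viewed as a function of $t\geq 0$. As a supremum of functions that are affine in $t$, $F$ is convex and lower semi-continuous in $t$, so the right derivative $F'_+(0)$ automatically exists in $[-\infty,+\infty]$. Writing $y_0 := \nabla\varphi^*(x)$ and choosing $M\geq 0$ with $\mathfrak{g}\geq -M$, I first check that $F$ is finite near $0$ and right-continuous at $0$: the test point $y=o$ gives $F(t) \geq -\varphi(o)-t\mathfrak{g}(o)$, while $\mathfrak{g}\geq -M$ gives $F(t) \leq \varphi^*(x) + tM$, so $F(t)\to F(0)=\varphi^*(x)$ as $t\to 0^+$. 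The goal is then to pinch $F'_+(0)$ between $-\mathfrak{g}(y_0)$ from above and below.

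The lower bound $F'_+(0) \geq -\mathfrak{g}(y_0)$ will drop out of \eqref{basicequ} applied to $\varphi^*$: differentiability of $\varphi^*$ at $x$ (together with $\varphi^{**}=\varphi$, which I use the convexity of $\varphi$ for) yields $\varphi^*(x) + \varphi(y_0) = \langle x,y_0\rangle$, so inserting $y=y_0$ as a test point in the supremum gives $F(t) \geq \varphi^*(x) - t\mathfrak{g}(y_0)$, and dividing by $t$ produces the bound. For the matching upper bound I will take, for each small $t>0$, a near-maximizer $y_t$ satisfying
\[
\langle x, y_t\rangle - \varphi(y_t) - t\mathfrak{g}(y_t) \geq F(t) - t^2.
\]
Since $\langle x, y_t\rangle - \varphi(y_t) \leq \varphi^*(x) = F(0)$, rearranging yields $[F(t)-F(0)]/t \leq -\mathfrak{g}(y_t) + t$, so it remains to show $\liminf_{t\to 0^+}\mathfrak{g}(y_t) \geq \mathfrak{g}(y_0)$; by lower semi-continuity of $\mathfrak{g}$, this will follow once I prove $y_t \to y_0$.

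The convergence $y_t\to y_0$ is the step I expect to be the main obstacle, and I plan to handle it in two parts. First, I will argue the family $\{y_t\}_{t>0}$ is bounded. Differentiability of $\varphi^*$ at $x$ forces $x\in\mathrm{int}(\dom\varphi^*)$, so $\varphi^*$ is finite on an open ball around $x$; if some subsequence $|y_{t_k}|\to\infty$ with $t_k\to 0^+$, then for $\epsilon>0$ small enough the points $z_k := x + \epsilon\, y_{t_k}/|y_{t_k}|$ lie in that ball, yet
\[
\varphi^*(z_k) \geq \langle z_k, y_{t_k}\rangle - \varphi(y_{t_k}) = \langle x, y_{t_k}\rangle - \varphi(y_{t_k}) + \epsilon|y_{t_k}| \geq F(t_k) - t_k M - t_k^2 + \epsilon|y_{t_k}| \to +\infty,
\]
contradicting local boundedness of $\varphi^*$. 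Second, once $\{y_t\}$ is bounded, any subsequential limit $y_\infty$ satisfies $\langle x, y_\infty\rangle - \varphi(y_\infty) \geq \varphi^*(x)$ by lower semi-continuity of $\varphi$ applied to the near-maximizer inequality in the limit $t_k\to 0^+$, so equality holds and $y_\infty$ is a maximizer. The convex-duality identification of maximizers of $\langle x,\cdot\rangle - \varphi(\cdot)$ with $\partial\varphi^*(x)$, together with differentiability giving $\partial\varphi^*(x) = \{y_0\}$, forces $y_\infty = y_0$; since this holds for every subsequential limit, $y_t\to y_0$ along the whole family, completing the argument.
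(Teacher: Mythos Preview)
The paper does not supply its own proof of this lemma; it merely quotes it from Rotem \cite[Proposition~2.1]{Rot20}. So there is nothing in the paper to compare against, and the question is whether your argument stands on its own. Your strategy---pinch $F'_+(0)$ between $-\mathfrak{g}(y_0)$ via the test point $y_0$ and via near-maximizers $y_t\to y_0$---is the natural one and essentially Rotem's, but two points need repair.

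First, you invoke $\varphi^{**}=\varphi$, i.e.\ convexity of $\varphi$, which the lemma does not assume. The equality $\varphi^*(x)+\varphi(y_0)=\langle x,y_0\rangle$ you need in fact holds without convexity: run your own boundedness/limit-point argument at $t=0$, with near-maximizers $y_k$ of $\sup_y(\langle x,y\rangle-\varphi(y))$; it shows the supremum is \emph{attained}, and since any maximizer lies in $\partial\varphi^*(x)=\{y_0\}$, one gets $\varphi(y_0)=\langle x,y_0\rangle-\varphi^*(x)$ directly. Second, and more substantively, your step $y_t\to y_0$ relies on every subsequential limit $y_\infty$ being a maximizer, but unwinding the near-maximizer inequality only gives $\langle x,y_\infty\rangle-\varphi(y_\infty)\ge\liminf_{k}F(t_k)$, not $\ge F(0)$; your sandwich $-\varphi(o)-t\mathfrak{g}(o)\le F(t)\le F(0)+tM$ does \emph{not} force $F(t)\to F(0)$. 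If $\mathfrak{g}(y_0)<\infty$ this is harmless, since $F(t)\ge F(0)-t\mathfrak{g}(y_0)$ supplies the missing bound. But if $\mathfrak{g}(y_0)=+\infty$ the step can fail outright: take $n=1$, $\varphi(y)=|y-1|$, $x=0$, and $\mathfrak{g}=+\infty$ on $(1/2,3/2)$, $\mathfrak{g}=0$ elsewhere; then $y_0=1$, $F(0)=0$, $F(t)=-1/2$ for all $t>0$, and the $y_t$ cluster at $1/2$ or $3/2$, not at $y_0$. Treat this case separately: having shown $y_0$ is the \emph{unique} maximizer of $\langle x,\cdot\rangle-\varphi(\cdot)$, use lower semi-continuity of $\mathfrak{g}$ to get $\mathfrak{g}>N$ on $\{|y-y_0|<\delta\}$, note $\sup_{|y-y_0|\ge\delta}(\langle x,y\rangle-\varphi(y))\le F(0)-\eta$ for some $\eta>0$, and split the supremum to obtain $F(t)\le F(0)-tN$ for small $t$, hence $F'_+(0)=-\infty=-\mathfrak{g}(y_0)$.
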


\section{The Riesz \texorpdfstring{$\alpha$}{}-potential and  Riesz \texorpdfstring{$\alpha$}{}-energy of log-concave functions}	\label{Section:-3}

In this section, we will study basic properties of Riesz $\alpha$-potential and Riesz $\alpha$-energy of log-concave functions.  Let $f\in \LC$  and $\alpha>0$. Recall that the Riesz $\alpha$-potential of log-concave function $f$ defined in \eqref{Riesz-P-def-1} is: for $x\in \Rn$,  
\begin{align*}
I_{\alpha}(f, x)= \int_{\mathbb{R}^n}\frac{f(y)}{|x-y|^{n-\alpha}}\,dy.\end{align*} The Riesz $\alpha$-energy of log-concave function $f$ is formulated by 
\begin{align} \label{alpha-energy-def}
\Ia(f)=\int_{\mathbb{R}^n}\int_{\mathbb{R}^n}\frac{ f(x) f(y)}{|x-y|^{n-\alpha}}dxdy=\int_{\mathbb{R}^n}    f(y) I_{\alpha}(f, y)dy.\end{align} 
When $K\subset\R^n$ is a convex body, one sees that \begin{align*}
\Ia(\mathbf{1}_K)=\int_{K}\int_{ K}\frac{ 1}{|x-y|^{n-\alpha}}dxdy,\end{align*} which equals the $(\alpha+1)$-chord integral of $K$ up to a multiplicative constant (see e.g.,  \cite[(3.11)]{LXYZ}). 

The Riesz $\alpha$-energy has the following properties. 
\begin{proposition}\label{properties-a-energy}
    Let $\alpha>0$ and $f\in \LC$. 
    
  \vskip 2mm \noindent i)
  For any $c>0$, one has  \begin{align}\label{homogeneity}
\Ia(cf)=c^2 \Ia(f) \ \ \ \mathrm{and}\ \ \Ia(f\circ c)=c^{-(n+\alpha)} \Ia(f),
\end{align}  where $(f\circ c)(x)=f(cx)$ for $x\in \Rn.$

\vskip 2mm \noindent ii)  For any $x_0\in \Rn,$\begin{align}\label{translation invariance}
\Ia( f(\cdot+x_0))=  \Ia(f).
\end{align}

\vskip 2mm \noindent iii) For $g\in \LC$ such that $f\leq g$, then 
\begin{align}\label{monotone increasing}
\Ia(f)\leq   \Ia(g).
\end{align}

\end{proposition}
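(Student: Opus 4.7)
The plan is to verify all three properties directly from the definition \eqref{alpha-energy-def} by routine manipulations, since the Riesz $\alpha$-energy is a symmetric bilinear-type integral whose structure makes the scaling and translation behavior transparent.

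For part (i), the first identity $\Ia(cf)=c^{2}\Ia(f)$ is immediate by pulling the two factors of $c$ out of the double integral, once one notes that $cf\in\LC$ since $c>0$ preserves log-concavity (just adds a constant to $\varphi=-\log f$) and keeps the total mass finite and nonzero. For the second identity, I would apply the change of variables $u=cx$, $v=cy$ in the double integral; the Jacobian contributes $c^{-2n}$, while $|x-y|^{n-\alpha}=c^{\alpha-n}|u-v|^{n-\alpha}$ contributes an extra $c^{n-\alpha}$ in the numerator, producing the factor $c^{-n-\alpha}$.

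For part (ii), I would apply the translation $u=x+x_{0}$, $v=y+x_{0}$, which has unit Jacobian and leaves $|x-y|=|u-v|$ invariant, so the integral is unchanged; one should note that $f(\cdot+x_{0})\in\LC$ because translation preserves log-concavity and total mass. For part (iii), the integrand $\frac{f(x)f(y)}{|x-y|^{n-\alpha}}$ is pointwise nondecreasing in $f$ since $f,g\geq 0$ and $|x-y|^{\alpha-n}\geq 0$; hence $f\leq g$ implies the integrand for $f$ is dominated by that for $g$ almost everywhere, and monotonicity of the Lebesgue integral on $\R^{n}\times\R^{n}$ yields \eqref{monotone increasing}.

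There is no substantial obstacle to any of these arguments; the only mild point requiring a word of justification is the applicability of Tonelli/Fubini when performing the change of variables, which is guaranteed because the integrand is nonnegative (so the iterated integrals and the double integral agree, whether finite or $+\infty$, and the change of variables is legitimate in either case). I would present the three parts in order, keeping each verification to a few lines.
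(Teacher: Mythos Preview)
Your proposal is correct and follows exactly the same approach as the paper's own proof: direct substitution $\tilde x=cx$, $\tilde y=cy$ for the scaling identity, the translation $\tilde x=x+x_0$, $\tilde y=y+x_0$ for (ii), and the pointwise inequality $f(x)f(y)\le g(x)g(y)$ for (iii). The paper is slightly more terse (it does not comment on Tonelli or on $cf\in\LC$), but the content is identical.
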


\begin{proof}
{\it i)}:  The first equality in \eqref{homogeneity} is trivial, so we only show the second equality. It follows from \eqref{alpha-energy-def}, $\tilde{x}=cx$ and $\tilde{y}=cy$ that 
\begin{align*}
\Ia(f\circ c)=\int_{\mathbb{R}^n}\int_{\mathbb{R}^n}\frac{ f(cx) f(cy)}{|x-y|^{n-\alpha}}dxdy=c^{-(n+\alpha)}\int_{\mathbb{R}^n}\int_{\mathbb{R}^n}\frac{ f(\tilde{x}) f(\tilde{y})}{|\tilde{x}-\tilde{y}|^{n-\alpha}}d\tilde{x}d\tilde{y}=c^{-(n+\alpha)}\Ia(f).\end{align*}

\noindent {\it ii)}:  For any $x_0\in\R^n$, one has, by the variable changes $\tilde{x}=x+x_0$ and  $\tilde{y}=y+x_0$, 
\begin{align*}
\Ia(f(\cdot +x_0))=\int_{\mathbb{R}^n}\int_{\mathbb{R}^n}\frac{ f(x+x_0) f(y+x_0)}{|x-y|^{n-\alpha}}dxdy= \int_{\mathbb{R}^n}\int_{\mathbb{R}^n}\frac{ f(\tilde{x}) f(\tilde{y})}{|\tilde{x}-\tilde{y}|^{n-\alpha}}d\tilde{x} d\tilde{y}=\Ia(f).\end{align*} 

\noindent {\it iii)}:  As $f\leq g$, one gets $f(x)f(y)\leq g(x)g(y)$ for all $x, y\in \Rn.$ The desired inequality \eqref{monotone increasing} is an immediate consequence of \eqref{alpha-energy-def}. 
\end{proof}

We have the following proposition. 

\begin{proposition} \label{estimate-riesz potential} 
    Let $f\!=\!e^{-\varphi}\!\in\! \LC$ and $\alpha\! >\!0$. For $y\in\Rn$ fixed,  one gets $0\!<\!I_{\alpha}(f,y)\!<\!\infty$ and 
    \begin{align}   -\infty< \int_{\mathbb{R}^n}\frac{\varphi(x)f(x)}{|x-y|^{n-\alpha}}\,dx<\infty. \label{positive-r-potential}
    \end{align}
\end{proposition}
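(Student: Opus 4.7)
The engine for the whole proposition is the affine lower bound \eqref{In.CF13-11}: there exist $b>0$ and $c\in\mathbb{R}$ with $\varphi(x)\ge b|x|+c$ on $\Rn$. This delivers three consequences I will use repeatedly: globally $f(x)=e^{-\varphi(x)}\le e^{-c}$ (so $f$ is bounded); at infinity $f(x)\le e^{-b|x|-c}$ (so $f$ decays exponentially); and $\varphi\ge c$ everywhere on $\Rn$ (controlling how negative $\varphi$ can become).

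First I would dispose of $0<I_{\alpha}(f,y)<\infty$. Positivity is immediate: since $\int_{\Rn} f\,dx>0$ and $f$ is upper semi-continuous, there exist a bounded Borel set $A$ and $\varepsilon>0$ with $f\ge\varepsilon$ on $A$ and $|A|>0$, so that $I_{\alpha}(f,y)\ge\varepsilon\int_{A}|x-y|^{\alpha-n}\,dx>0$. For finiteness I split $\Rn=B_2^n(y,1)\cup B_2^n(y,1)^c$. On the first piece, the bound $f\le e^{-c}$ reduces the question to $\int_{B_2^n(y,1)}|x-y|^{\alpha-n}\,dx$, which is finite precisely because $\alpha>0$. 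On the second piece, $|x-y|^{\alpha-n}$ grows at most polynomially in $|x|$, while $f$ decays exponentially, so the integrand is dominated by an integrable function.

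The second assertion \eqref{positive-r-potential} follows from the same decomposition once I establish the pointwise domination
\[
|\varphi(x)|\,f(x)\le C_1\,\mathbf{1}_{\{|x|\le R\}}(x)+C_2\,e^{-(b/4)|x|}\quad\text{on }\Rn,
\]
for constants $C_1,C_2$ and a radius $R$ depending only on $b$ and $c$. Indeed, on $\{|x|\le R\}\cap\dom(\varphi)$ the function $\varphi$ is bounded below by $c$, and the scalar map $t\mapsto |t|e^{-t}$ is bounded on $[c,+\infty)$ (the only potential blow-up at $t\to+\infty$ being cured by the exponential); on $\{|x|>R\}$ one chooses $R$ large enough so that $\varphi(x)\ge(b/2)|x|\ge 0$, and then $t\mapsto t\,e^{-t/2}$ bounded on $[0,\infty)$ converts $\varphi e^{-\varphi}$ into an exponential tail in $|x|$. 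With this bound, the near/far splitting used for $I_{\alpha}(f,y)$ shows that $\varphi(x)f(x)/|x-y|^{n-\alpha}$ is in fact absolutely integrable, which is stronger than what \eqref{positive-r-potential} asks for. The main obstacle I anticipate is purely sign-related: because $\varphi$ may take negative values the integrand has no definite sign, so one must control $|\varphi|\,f$ rather than $\varphi f$ directly; the uniform bound $\varphi\ge c$ coming from \eqref{In.CF13-11} is precisely what allows the negative part of the integrand to be bounded on a bounded set, which the singular kernel $|x-y|^{\alpha-n}$ then absorbs with room to spare.
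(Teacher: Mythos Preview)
Your argument is correct. The route differs from the paper's in execution though the engine is the same affine bound \eqref{In.CF13-11}. For the finiteness of $I_{\alpha}(f,y)$, you use the standard near/far decomposition $B_2^n(y,1)\cup B_2^n(y,1)^c$, handling the singularity with $f\le e^{-c}$ and the tail with exponential decay versus polynomial growth; the paper instead substitutes $z=x-y$, applies the reverse triangle inequality $|x|\ge\big||z|-|y|\big|$, and splits at $|z|=|y|$ to obtain explicit bounds involving $\Gamma(\alpha)$. For \eqref{positive-r-potential}, you build a pointwise majorant $|\varphi|\,f\le C_1\mathbf{1}_{\{|x|\le R\}}+C_2e^{-(b/4)|x|}$ and then rerun the near/far splitting on each piece; the paper proceeds more algebraically, using $(\varphi-c)e^{-(\varphi-c)/2}\le 2e^{-1}$ to get $\varphi e^{-\varphi}\le 2e^{-1-c/2}e^{-\varphi/2}+ce^{-\varphi}$, which reduces the upper bound directly to the already--established finiteness of $I_{\alpha}(e^{-\varphi/2},y)$ and $I_{\alpha}(f,y)$. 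Your approach is perhaps more elementary and yields absolute integrability of $\varphi f|x-y|^{\alpha-n}$ in one stroke; the paper's bootstrap is cleaner in that it stays entirely within the class $\LC$ (noting $e^{-\varphi/2}\in\LC$) and avoids a second explicit splitting.
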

\begin{proof} Let  $f=e^{-\varphi}\in \LC$. As $0<\int_{\Rn} e^{-\varphi}\,dx<\infty,$   $\{x: f(x)\neq 0\}$ is a set with positive Lebesgue measure, and also \eqref{In.CF13-11} holds.  The former one yields $ I_{\alpha} (f,y)>0$ for any $y\in \Rn.$ The latter one implies that,  for any $x\in\mathbb{R}^n$, there exist constans $b>0$ and $c\in\R$ such that
\begin{eqnarray}\label{upperboundoff0}
f(x)\leq e^{-c}e^{-b|x|}. 
\end{eqnarray}    
Taking $z=x-y$ and using  the triangle inequality, one gets
\begin{align*}
    I_{\alpha}(f,y)&\leq   e^{-c}\int_{\R^n}\frac{e^{-b|x|}}{|x-y|^{n-\alpha}}dx\\
    &\leq  e^{-c}\int_{\R^n}e^{-b\big||z|-|y|\big|}|z|^{\alpha-n}dz\\
    &= e^{-c}e^{b|y|}\int_{\{z\in\mathbb{R}^n: |y|<|z|\}}e^{-b|z|}|z|^{\alpha-n}dz+ e^{-c}\int_{\{z\in\mathbb{R}^n: |y|\geq|z|\}}e^{b|z|-b|y|}|z|^{\alpha-n}dz.
\end{align*} From the polar coordinates formula,  it is easy to check that 
\begin{align*}
 \int_{\{z\in\mathbb{R}^n: |y|\geq|z|\}}e^{b|z|-b|y|}|z|^{\alpha-n}dz&\leq \int_{\{z\in\mathbb{R}^n: |y|\geq|z|\}} |z|^{\alpha-n}dz= \frac{n\omega_n|y|^{\alpha}}{\alpha},
\end{align*} where $\omega_n$ denotes the volume of the unit ball.  Similarly, 
\begin{align*}
 e^{b|y|}\int_{\{z\in\mathbb{R}^n: |y|<|z|\}}e^{-b|z|}|z|^{\alpha-n}dz
 &\leq e^{b |y|}\int_{\R^n}e^{-b|z|}|z|^{\alpha-n}dz=b^{-\alpha}n\omega_n \Gamma (\alpha)e^{b|y|},
\end{align*} 
 where $\Gamma(\cdot)$ denotes the Gamma function: $\Gamma(s)=\int_0^{\infty} e^{-t} t^{s-1}\,dt$ for any $s>0$. 
Therefore, for any fixed $y\in\R^n$, we have
\begin{align*}
    I_{\alpha}(f,y)& \leq \frac{e^{-c}n\omega_n}{\alpha}|y|^{\alpha} +e^{-c}b^{-\alpha}n\omega_n \Gamma (\alpha)e^{b|y|}, 
\end{align*} which shows $0<I_\alpha (f,y)<\infty$ for any $f\in{\rm LC}_n$ and $y\in \Rn$ fixed.

Now let us prove \eqref{positive-r-potential}. To this end, by \eqref{In.CF13-11},    $\varphi-c\geq b|x|\geq 0$ for all $x\in \Rn.$  Applying the fact that $e^{-\frac{s}{2}}s\leq 2e^{-1}$ for $s\ge 0$ to $\varphi-c$, one gets $e^{-\frac{\varphi-c}{2}}(\varphi-c)\leq 2e^{-1}.$ This further gives $$\varphi e^{-\varphi}\leq 2e^{-1-\frac{c}{2}}e^{-\frac{\varphi}{2}}+ce^{-\varphi}.$$ Note that $e^{-\frac{\varphi}{2}}\in \LC$ and hence for all $y\in \Rn,$
$$
\int_{\mathbb{R}^n}\frac{\varphi(x)e^{-\varphi(x)}}{|x-y|^{n-\alpha}}dx\leq  2e^{-1-\frac{c}{2}} \int_{\mathbb{R}^n}\frac{e^{-\frac{\varphi(x)}{2}}}{|x-y|^{n-\alpha}}dx+ c\int_{\mathbb{R}^n}\frac{e^{-\varphi(x)}}{|x-y|^{n-\alpha}}dx<\infty.
$$ On the other hand, as $\varphi\geq c,$ one sees that for all $y\in \Rn$, $$
\int_{\mathbb{R}^n}\frac{\varphi(x)e^{-\varphi(x)}}{|x-y|^{n-\alpha}}dx\geq c 
\int_{\mathbb{R}^n}\frac{e^{-\varphi(x)}}{|x-y|^{n-\alpha}}dx>-\infty.$$ This completes the proof of \eqref{positive-r-potential}.  \end{proof}

Our next result is regarding the finiteness of the Riesz $\alpha$-energy of log-concave functions.

\begin{proposition}\label{finiteness of I_q}
For $f\in \LC$ and $\alpha >0$, one has $0< \Ia(f)<\infty.$ 
\end{proposition}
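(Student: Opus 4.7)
The plan is to establish $\Ia(f) > 0$ and $\Ia(f) < \infty$ separately, with the latter being the substantive direction.

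\textbf{Positivity.} The lower bound is essentially immediate from Proposition \ref{estimate-riesz potential}: since $I_\alpha(f,y) > 0$ for every $y \in \Rn$, and $f \in \LC$ forces $\{f > 0\}$ to have positive Lebesgue measure (because $J(f) > 0$), the integrand $f(y) I_\alpha(f,y)$ in the representation $\Ia(f) = \int_{\Rn} f(y) I_\alpha(f,y)\,dy$ is strictly positive on a set of positive measure, yielding $\Ia(f) > 0$.

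\textbf{Finiteness.} The main tool is the exponential decay bound $f(x) \leq e^{-c} e^{-b|x|}$ furnished by \eqref{upperboundoff0}, which in particular gives the uniform bound $f \leq e^{-c}$. Since the integrand is non-negative, Tonelli applies, and I would split along the kernel's behaviour into a local (singular) part and a far-field (growth) part:
$$\Ia(f) = \int\!\!\int_{|x-y| \leq 1}\!\! \frac{f(x)f(y)}{|x-y|^{n-\alpha}}\,dx\,dy \;+\; \int\!\!\int_{|x-y| > 1}\!\! \frac{f(x)f(y)}{|x-y|^{n-\alpha}}\,dx\,dy \;=:\; A + B.$$
For $A$, bound one copy of $f$ by $e^{-c}$ and pass to polar coordinates to compute $\int_{|z|\leq 1} |z|^{\alpha - n}\,dz = n\omega_n/\alpha$, which is finite precisely because $\alpha > 0$. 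This gives $A \leq e^{-c}\,n\omega_n \alpha^{-1} J(f) < \infty$.

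For $B$, the behaviour of $|x-y|^{\alpha-n}$ depends on the size of $\alpha$. If $\alpha \leq n$, then $|x-y|^{\alpha-n} \leq 1$ on $\{|x-y| > 1\}$, so $B \leq J(f)^2 < \infty$. If $\alpha > n$, I would apply the triangle inequality $|x-y|^{\alpha - n} \leq 2^{\alpha-n}(|x|^{\alpha-n} + |y|^{\alpha-n})$, which (together with symmetry in $x,y$) reduces matters to the finiteness of $\int_{\Rn} f(x) |x|^{\alpha-n}\,dx$; this is immediate from the exponential decay of $f$.

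The one subtlety worth flagging is that the pointwise estimate on $I_\alpha(f,\cdot)$ produced in Proposition \ref{estimate-riesz potential} is not strong enough to conclude finiteness of $\Ia(f)$ directly: its $e^{b|y|}$ growth term exactly cancels the decay of $f(y) \leq e^{-c} e^{-b|y|}$, leaving a non-integrable majorant over $\Rn$. The splitting above sidesteps this by separating the singular and tail regions, where the same constant $b$ can be used effectively in both.
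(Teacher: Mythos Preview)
Your proof is correct. Both the positivity and finiteness arguments go through without gaps; in particular, the near/far split on $|x-y|$ handles the singularity and the tail cleanly, and the sub-split on $\alpha\le n$ versus $\alpha>n$ for the far part is the right move.

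The paper takes a different route for finiteness. Rather than splitting on $|x-y|$, it first uses monotonicity and homogeneity (Proposition~\ref{properties-a-energy}) to reduce to the single model case $\Ia(e^{-|\cdot|})$, and then estimates that integral via the substitution $z=x-y$ together with the triangle-inequality bound $e^{-|x|}\le e^{-\big||z|-|y|\big|}$, splitting instead on $|y|<|z|$ versus $|y|\ge|z|$ and computing with polar coordinates and Gamma functions. Your approach is more elementary and direct---it avoids any reduction step and the Gamma-function bookkeeping---while the paper's reduction has a secondary payoff: the explicit finiteness of the constant $\Ia(e^{-|\cdot|})$ is reused later (see Lemma~\ref{Le.vaphi(o)=0}), so proving it once here is not wasted effort in the paper's larger architecture.
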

\begin{proof} Let  $f=e^{-\varphi}\in \LC$. As $0<\int_{\Rn} e^{-\varphi}\,dx<\infty,$ one sees that $\{x: f(x)\neq 0\}$ is a set with positive Lebesgue measure, and also \eqref{upperboundoff0} holds. The former one yields $ \Ia(f)>0.$   
By \eqref{upperboundoff0}  and Proposition \ref{properties-a-energy}, one gets $$\Ia(f)\leq\Ia(e^{-b|\cdot |-c})= e^{-2c} b^{-n-\alpha} \Ia(e^{-|\cdot|}).  $$ Hence, if $\Ia(e^{-|\cdot|})<\infty$, the desired inequality $\Ia(f)<\infty$ follows,

Taking $z=x-y$ and using  the triangle inequality, one gets 
\begin{align}
\Ia(e^{-|\cdot|})& = \int_{\mathbb{R}^n}\int_{\mathbb{R}^n}\frac{ e^{-|x|}e^{-|y|} }{|x-y|^{n-\alpha}}dxdy \nonumber \\
 &\leq  \int_{\mathbb{R}^n}\int_{\mathbb{R}^n} e^{- \big||z|-|y|\big|}e^{- |y| } |z|^{\alpha-n}dzdy \nonumber \\
 &= \int_{ \mathbb{R}^n}\int_{\{z\in\mathbb{R}^n: |y|<|z|\}} e^{-  |z|  }  |z|^{\alpha-n}dzdy +\int_{ \mathbb{R}^n}\int_{\{z\in\mathbb{R}^n: |y|\geq|z|\}} e^{ |z|  -2 |y| } |z|^{\alpha -n}dzdy. \label{estimate-1}
  \end{align}
 From  the Fubini's theorem and the polar coordinates formula,  it is not hard to check that  
  \begin{align*}
  A_1&:=\int_{ \mathbb{R}^n}\int_{\{y\in\mathbb{R}^n: |y|<|z|\}} e^{-  |z|  }  |z|^{\alpha-n}dydz \\
  & =\int_{ \mathbb{R}^n} e^{-|z| }  |z|^{\alpha -n}\left(\int_{\{y\in\mathbb{R}^n: |y|<|z|\}}dy\right)dz\\
 & =\omega_n\int_{ \mathbb{R}^n}e^{- |z| } |z|^{\alpha }dz\\& = n\omega_n^2\Gamma(n+\alpha )<\infty.
  \end{align*}  
 Similarly,  one has 
\begin{align*}
 \int_{ \mathbb{R}^n}\int_{\{y\in\mathbb{R}^n: |z|\leq |y|\}}  e^{- |y| } |z|^{\alpha-n}dydz
 &=\int_{ \mathbb{R}^n} |z|^{\alpha-n}\left(n\omega_n\int_{|z|}^{+\infty}  e^{-t}t^{n-1}dt\right)dz\\
 &\leq\int_{ \mathbb{R}^n} |z|^{\alpha-n} e^{-\frac{|z|}{2}}\left(n\omega_n\int_{|z|}^{+\infty}  e^{-\frac{ t}{2}}t^{n-1}dt\right)dz\\
 &\leq\int_{ \mathbb{R}^n} |z|^{\alpha-n} e^{-\frac{|z|}{2}}\left(n\omega_n\int_{0}^{+\infty}  e^{-\frac{t}{2}}t^{n-1}dt\right)dz\\
 &= n\omega_n 2^n \Gamma(n)\int_{ \mathbb{R}^n} |z|^{\alpha-n} e^{-\frac{|z|}{2}} dz\\
 &= (n\omega_n)^2 2^{\alpha+n}\Gamma(n)\Gamma(\alpha)<\infty.
  \end{align*}
 By the Fubini's theorem, one gets 
\begin{align*}
 A_2:=\int_{ \mathbb{R}^n}\int_{\{z\in\mathbb{R}^n: |y|\geq|z|\}} e^{ |z|  -2 |y| } |z|^{\alpha -n}dzdy  &
 \leq\int_{ \mathbb{R}^n}\int_{\{z\in\mathbb{R}^n: |z|\leq |y|\}}  e^{-|y| } |z|^{\alpha-n}dzdy\\   &=\int_{ \mathbb{R}^n}\int_{\{y\in\mathbb{R}^n: |z|\leq |y|\}}  e^{- |y| } |z|^{\alpha-n}dydz<\infty. \end{align*} It follows from \eqref{estimate-1} that  $\Ia(f)$ is finite, and this concludes the proof. 
\end{proof}
 
It has been proved in \cite[Theorem 5.12]{HLXZ} (see also  \cite[Proposition 3.2]{FYZZ}) that, for $f=e^{-\varphi}\in \LC$ and $\alpha>0$, one has \begin{eqnarray}\label{FYZZ-1--1}\int_{\mathbb{R}^n} \frac{|\nabla f(y)|}{  |y|^{n-\alpha}} dy=
\int_{\mathbb{R}^n} \frac{|\nabla \varphi(y)|e^{-\varphi(y)}}{    |y|^{n-\alpha}} dy<\infty. 
\end{eqnarray}  In particular, when $\alpha=n, $ one gets \cite[Lemma 4]{CK2015} \begin{eqnarray}\label{FYZZ-1--2}\int_{\mathbb{R}^n} |\nabla f(y)|     dy=
\int_{\mathbb{R}^n} |\nabla \varphi(y)|  e^{-\varphi(y)} dy<\infty. 
\end{eqnarray}

 Using this inequality, we can obtain the following result.   

\begin{proposition}\label{bounded}
 Let $\alpha>0$ and $f\in \LC$.   Then 
  \begin{align*}
 \int_{\mathbb{R}^n}\int_{\mathbb{R}^n} \frac{|\nabla f(y)|f(z)}{  |z-y|^{n-\alpha}}   \,dy \,dz<\infty.
   \end{align*}
   
\end{proposition}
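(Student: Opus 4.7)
The plan is to rewrite the integrand using the identity $|\nabla f(y)| = |\nabla \varphi(y)|e^{-\varphi(y)}$ and decompose the domain of integration according to the distance $|z-y|$. Denote the double integral by $T$ and split $T = T_1 + T_2$, where $T_1$ is the part over the near region $\{(y,z): |z-y|\leq 1\}$ and $T_2$ the part over the far region $\{(y,z): |z-y|>1\}$. The main tools will be the finiteness of $\int_{\Rn}|\nabla f|\,dy$ and of $\int_{\Rn}|\nabla f(y)|/|y|^{n-\alpha}\,dy$ from \eqref{FYZZ-1--2} and \eqref{FYZZ-1--1}, together with the exponential pointwise decay $f(x)\leq e^{-c-b|x|}$ coming from \eqref{In.CF13-11}.

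For $T_1$, I would use the boundedness of $f$ (automatic for $f\in \LC$, say $f\leq M$) and apply Tonelli's theorem to bring the $z$-integral inside. A polar coordinate computation yields
\[
\int_{\{|u|\leq 1\}} \frac{du}{|u|^{n-\alpha}} = n\omega_n\int_0^1 r^{\alpha-1}\,dr = \frac{n\omega_n}{\alpha},
\]
and the condition $\alpha>0$ is exactly what makes this integral finite near the origin. Hence $T_1\leq (Mn\omega_n/\alpha)\int_{\Rn}|\nabla f(y)|\,dy<\infty$ by \eqref{FYZZ-1--2}.

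For $T_2$, I would distinguish two subcases depending on the sign of $\alpha-n$. If $\alpha\leq n$, the kernel satisfies $|z-y|^{\alpha-n}\leq 1$ on $\{|z-y|>1\}$, so Tonelli gives $T_2\leq J(f)\cdot \int_{\Rn}|\nabla f(y)|\,dy<\infty$ by \eqref{FYZZ-1--2}. If $\alpha>n$, the Riesz kernel grows at infinity and I would separate the variables via the elementary inequality
\[
|z-y|^{\alpha-n}\leq (|y|+|z|)^{\alpha-n}\leq C_\alpha\bigl(|y|^{\alpha-n}+|z|^{\alpha-n}\bigr),
\]
valid for some constant $C_\alpha>0$ depending only on $\alpha-n$. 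This produces
\[
T_2 \leq C_\alpha J(f)\int_{\Rn}|\nabla f(y)||y|^{\alpha-n}\,dy + C_\alpha \int_{\Rn}|\nabla f(y)|\,dy\cdot \int_{\Rn}f(z)|z|^{\alpha-n}\,dz.
\]
The first piece is finite by \eqref{FYZZ-1--1}; the $(\alpha-n)$-th moment of $f$ is finite thanks to $f(z)\leq e^{-c-b|z|}$ via a direct polar-coordinate integration producing a Gamma-function factor.

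The main obstacle is the range $\alpha>n$, where the Riesz kernel is no longer singular at the diagonal but instead grows at large distances. Separating $y$ and $z$ by means of the subadditivity-type inequality for $x\mapsto x^{\alpha-n}$ is exactly what reduces this case to the already-established derivative-weighted estimate \eqref{FYZZ-1--1} and to the moment control coming from the exponential decay \eqref{In.CF13-11}.
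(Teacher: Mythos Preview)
Your proof is correct and follows essentially the same strategy as the paper: both rely on \eqref{FYZZ-1--1}, \eqref{FYZZ-1--2}, the boundedness of $f$, and the subadditivity bound $|z-y|^{\alpha-n}\le C_\alpha(|y|^{\alpha-n}+|z|^{\alpha-n})$ when $\alpha>n$. The only cosmetic difference is organizational---you perform the near/far split uniformly for all $\alpha$ and then subdivide the far region by the sign of $\alpha-n$, whereas the paper first splits by the sign of $\alpha-n$ and only invokes a near/far decomposition in the case $0<\alpha<n$.
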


 \begin{proof} First, let $\alpha\geq n.$ From the triangle inequality, one has
 \begin{align*}
 \int_{\mathbb{R}^n}\int_{\{y\in\mathbb{R}^n: |y|\leq |z|\}} \frac{|\nabla f(y)| f(z)}{ |z-y|^{n-\alpha} } \,dy \,dz  
 & \leq\int_{\mathbb{R}^n}\int_{\{y\in\mathbb{R}^n: |y|\leq |z|\}} \frac{|\nabla f(y)| f(z)} {(|z|+|y|)^{n-\alpha}}  dy dz\\
 & \leq 2^{\alpha-n}\int_{\mathbb{R}^n}\int_{\{y\in\mathbb{R}^n: |y|\leq |z|\}} \frac{|\nabla f(y)|f(z)}{  |z|^{n-\alpha}}   dy dz\\
 & \leq 2^{\alpha-n}\left(\int_{   \mathbb{R}^n} |\nabla f(y)| dy\right) \times\left(\int_{\mathbb{R}^n}  \frac{f(z)}{|z|^{n-\alpha} }dz\right)<\infty,
   \end{align*} 
   where the last inequality follows from \eqref{FYZZ-1--2} 
and the fact that $f\in \LC$ which guarantees  (see \cite[Proposition 2.1]{HLXZ} or \cite[Lemma 3.1]{FYZZ}), for $\alpha>0$,  $$ \int_{\mathbb{R}^n}|z|^{\alpha-n}   f(z)dz <\infty. $$ Similarly,  when combined  with \eqref{FYZZ-1--1}, we also have
  \begin{align*}
 \int_{\mathbb{R}^n}\int_{\{y\in\mathbb{R}^n: |y| > |z|\}} \frac{|\nabla f(y)|f(z)}{ |z-y|^{n-\alpha}}   dy dz 
   & \leq 2^{\alpha-n}\int_{\mathbb{R}^n}\int_{\{y\in\mathbb{R}^n: |y|> |z|\}} \frac{|\nabla f(y)|f(z)}{ |y|^{n-\alpha }}  dy dz\\
&    \leq 2^{\alpha-n}\left(\int_{   \mathbb{R}^n} |\nabla f(y)| \cdot |y|^{\alpha-n}dy\right) \times\left(\int_{\mathbb{R}^n}    f(z)dz\right)<\infty.
   \end{align*}
 Consequently, the desired result holds when $\alpha \geq n$.

 For the case $0<\alpha<n$,  let $\epsilon_0>0$ be a fixed constant and then   \begin{align}
 \int_{\mathbb{R}^n} \left(\int_{\mathbb{R}^n} \frac{|\nabla f(y)|f(z)} { |z-y|^{n-\alpha}   } dz\right) dy  
 & =\int_{\Rn}\left( \int_{\{z\in \Rn: |z-y|\geq \epsilon_0\}} \frac{|\nabla f(y)|f(z)} { |z-y|^{n-\alpha}   }  dz \right) dy \nonumber \\ &\quad \quad \quad +\int_{\Rn}\left( \int_{\{z\in \Rn: |z-y|< \epsilon_0\}} \frac{|\nabla f(y)|f(z)} { |z-y|^{n-\alpha}   }  dz \right) dy\nonumber  \\ & \leq \epsilon_0^{\alpha-n}  \int_{\Rn} \left(\int_{\{z\in \Rn: |z-y|\geq \epsilon_0\}}  |\nabla f(y)|   f(z) dz \right) dy\nonumber \\ &\quad \quad \quad + e^{-c} \int_{\Rn}\left( \int_{\{z\in \Rn: |z-y|< \epsilon_0\}}  \frac{|\nabla f(y)|} { |z-y|^{n-\alpha}   }    dz \right) dy, \label{long-ineq-1} \end{align}  where, in the last equality, we have used \eqref{upperboundoff0}.  It can be checked that \begin{align} \int_{\Rn} \left(\int_{\{z\in \Rn: |z-y|\geq \epsilon_0\}}  |\nabla f(y)|   f(z) dz\right) dy &\leq  \int_{\Rn} \left(\int_{\Rn} |\nabla f(y)|   f(z) dz\right) dy\nonumber  \\&= \left(\int _{\Rn} |\nabla f(y)| \,dy\right) \times \left(\int_{\Rn}   f(z) dz\right)<\infty, \label{long-ineq-2} \end{align} where we have used \eqref{FYZZ-1--2} and the fact that $f\in \LC$ (hence is integrable). On the other hand, \begin{align*}
     \int_{\Rn} \left(\int_{\{z\in \Rn: |z-y|< \epsilon_0\}}  \frac{|\nabla f(y)| }{ |z-y|^{n-\alpha}}   dz\right) dy &=\int_{\Rn} |\nabla f(y)| \left(\int_{\{z\in \Rn: |z-y|< \epsilon_0\}}  |z-y|^{\alpha-n}   dz\right) dy\\ &=\int_{\Rn} |\nabla f(y)| \left(\int_{\{x\in \Rn: |x|<\epsilon_0\}}  |x|^{\alpha-n}   dx\right) dy\\ &=\frac{n\omega_n \epsilon_0^{\alpha}}{\alpha} \int_{\Rn}|\nabla f(y)| \,dy<\infty, 
 \end{align*} where again we have used \eqref{FYZZ-1--2}. Together with \eqref{long-ineq-1} and \eqref{long-ineq-2}, one gets, for $0<\alpha<n$,   \begin{align*}
\int_{\mathbb{R}^n}\left(\int_{\mathbb{R}^n} \frac{|\nabla f(y)|f(z)} {|z-y|^{\alpha-n}} dz \right) dy<\infty.\end{align*} The desired result holds for $0<\alpha <n$ by Fubini-Tonelli theorem. This completes the proof. \end{proof}  

  We will need the following result. 
  \begin{proposition}\label{finite-22-33}
Let $\alpha>0$ and $f=e^{-\varphi}\in\LC$.  Then
\begin{eqnarray}\label{ff0}
 \int_{\mathbb{R}^n}\int_{\mathbb{R}^n}   \frac{\varphi(x)f(x)f(y)}{ |x-y|^{n-\alpha}}dxdy\in(-\infty,\infty).
\end{eqnarray}
\end{proposition}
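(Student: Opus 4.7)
The plan is to split $\varphi$ into a nonnegative piece and a constant piece using the growth estimate \eqref{In.CF13-11}, so that the double integral decomposes into a manifestly nonnegative part plus a bounded multiple of $\Ia(f)$. By \eqref{In.CF13-11}, there exist $b>0$ and $c\in\R$ with $\varphi(x)\ge b|x|+c\ge c$ for all $x\in\Rn$. Writing $\varphi(x)=(\varphi(x)-c)+c$ gives
\begin{align*}
\int_{\mathbb{R}^n}\!\int_{\mathbb{R}^n}\!\frac{\varphi(x)f(x)f(y)}{|x-y|^{n-\alpha}}\,dx\,dy
=\int_{\mathbb{R}^n}\!\int_{\mathbb{R}^n}\!\frac{(\varphi(x)-c)f(x)f(y)}{|x-y|^{n-\alpha}}\,dx\,dy + c\,\Ia(f).
\end{align*}
The last summand is a finite real number by Proposition \ref{finiteness of I_q}, and the integrand of the first summand is now nonnegative, so it suffices to show that first double integral is finite.

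The key step will be to dominate $(\varphi-c)f$ pointwise by $\sqrt{f}$, precisely the same trick already exploited in Proposition \ref{estimate-riesz potential}. Namely, the elementary inequality $se^{-s/2}\le 2e^{-1}$ for $s\ge 0$, applied at $s=\varphi(x)-c\ge 0$, yields
$$(\varphi(x)-c)f(x)=e^{-c}(\varphi(x)-c)e^{-(\varphi(x)-c)}\le 2e^{-1-c}e^{-(\varphi(x)-c)/2}=2e^{-1-c/2}\sqrt{f(x)}.$$
Since $\varphi/2$ is convex and satisfies $\varphi(x)/2\ge (b/2)|x|+c/2$, the function $\sqrt{f}=e^{-\varphi/2}$ lies in $\LC$. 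Moreover, $\varphi\ge c$ gives the uniform upper bound $\sqrt{f(y)}\le e^{-c/2}$, hence $f(y)\le e^{-c/2}\sqrt{f(y)}$.

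Combining these two pointwise estimates,
\begin{align*}
0\le \int_{\mathbb{R}^n}\!\int_{\mathbb{R}^n}\!\frac{(\varphi(x)-c)f(x)f(y)}{|x-y|^{n-\alpha}}\,dx\,dy
\le 2e^{-1-c}\!\int_{\mathbb{R}^n}\!\int_{\mathbb{R}^n}\!\frac{\sqrt{f(x)}\sqrt{f(y)}}{|x-y|^{n-\alpha}}\,dx\,dy=2e^{-1-c}\,\Ia(\sqrt{f}),
\end{align*}
which is finite by a second application of Proposition \ref{finiteness of I_q} to the log-concave function $\sqrt{f}\in\LC$. Adding the finite term $c\,\Ia(f)$ gives \eqref{ff0}.

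The main (and only real) obstacle is avoiding an $\infty-\infty$ ambiguity, since $\varphi$ is not a priori of one sign. The preliminary affine shift $\varphi\mapsto\varphi-c$ sidesteps this issue cleanly: the shifted piece is nonnegative, so Tonelli-Fubini applies without restriction, and the constant piece is absorbed into the already-known finite quantity $\Ia(f)$. Everything else reduces to bookkeeping around the bound $se^{-s}\le 2e^{-1}e^{-s/2}$ already used in the proof of Proposition \ref{estimate-riesz potential}.
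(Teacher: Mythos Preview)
Your proof is correct and follows essentially the same approach as the paper: both shift $\varphi$ by the constant $c$ from \eqref{In.CF13-11} to reduce to a nonnegative integrand, then apply the pointwise bound $se^{-s}\le (2/e)e^{-s/2}$ to dominate by $\Ia(e^{-\varphi/2})$, which is finite by Proposition~\ref{finiteness of I_q}. Your write-up is slightly more streamlined in that it handles both signs of $c$ uniformly rather than splitting into the cases $c\ge 0$ and $c<0$ as the paper does.
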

\begin{proof} Let $f\!\!=\!\!e^{-\varphi}\!\in\! \LC$. By    \eqref{In.CF13-11} (i.e., $\varphi(x)\!\geq\! c$ for any $x\!\in\!\R^n$) and   Proposition  \ref{finiteness of I_q}, one gets    
\begin{eqnarray*}
 \int_{\mathbb{R}^n}\int_{\mathbb{R}^n}   \frac{\varphi(x)f(x)f(y)}{ |x-y|^{n-\alpha}}dxdy\geq  c\Ia (f)>-\infty.
\end{eqnarray*}

To prove the boundedness from above, we first let $c\geq 0$. That is, $\varphi (x)\geq 0$ for all $x\in \Rn$ and hence $\varphi\geq \frac{\varphi}{2}.$ From the basic inequality $te^{-\frac{t}{2}}\leq \frac{2}{e}$ for $t>0$ and Proposition \ref{finiteness of I_q}, we have 
\begin{align*}
    \int_{\mathbb{R}^n}\int_{\mathbb{R}^n}   \frac{\varphi(x)f(x)f(y)}{ |x-y|^{n-\alpha}}dxdy
    \leq  \frac{2}{e}\int_{\mathbb{R}^n}\int_{\mathbb{R}^n}   \frac{ e^{-\frac{\varphi(x)}{2}}e^{-\frac{\varphi(y)}{2}}}{ |x-y|^{n-\alpha}}dxdy
    = \frac{2}{e}\Ia(e^{-\frac{\varphi}{2}}) <\infty.
\end{align*} If $c<0,$ then let $\widetilde{\varphi}=\varphi-c\geq0$ and $\widetilde{f}=e^{-\widetilde{\varphi}}=e^c f$. This further implies that  
 \begin{align*}
    \int_{\mathbb{R}^n}\int_{\mathbb{R}^n}   \frac{\varphi(x)f(x)f(y)}{ |x-y|^{n-\alpha}}dxdy& = e^{-2c} \int_{\mathbb{R}^n}\int_{\mathbb{R}^n}   \frac{\widetilde{\varphi}(x)\widetilde{f}(x)\widetilde{f}(y)}{ |x-y|^{n-\alpha}}dxdy+c\Ia(f)\\ &\leq 2 e^{-2c-1} \Ia(e^{-\frac{\widetilde{\varphi}}{2}}) +c\Ia(f)<\infty.
\end{align*}
Therefore, we obtain \eqref{ff0}.
\end{proof}

\section{Variational formulas for the Riesz \texorpdfstring{$\alpha$}{}-energy  when  \texorpdfstring{$\psi^*=\beta_1\varphi^*+\beta_2$}{}}\label{nobound.}
 Recall that,  $\delta \Ia(f,g)$ defined in \eqref{first variation}: for $\alpha>0$, $f\in \LC$, and $g: \Rn\to \R$ a log-concave function,  \begin{align*} \delta \Ia(f,g)=\frac{1}{2}\lim_{t\to0^{+}}\frac{ \Ia(f_t)-\Ia(f)}{t}, 
\end{align*} provided that the above limit exists. Hereafter, we let $f_t=f\oplus t\bullet g$. To find the integral expression for $\delta \Ia(f, g)$ is not easy; this can be seen from the variational formulas regarding log-concave functions  in \cite{CF13, FYZZ, HLXZ,Rot20, Rotem2022, Ulivelli}. Moreover, in general  $\delta \Ia(f, g)$ may  be infinite. So when calculate $\delta \Ia(f, g)$, certain conditions on the relation between $f$ and $g$ are required. Throughout the section, we always let $f=e^{-\varphi}$ and $g=e^{-\psi}.$

Before we proceed to establish the explicit integral expression of $\delta \Ia(f, g)$ for $f\in \LC$ and general $g$, let us mention the following often used but  easily established fact: if $\widetilde{\varphi}=\varphi-c$ and $\widetilde{\psi}=\psi-d$ for $c,d\in\R$, let $\widetilde{f}=e^{-\widetilde{\varphi}}, \widetilde{g}=e^{-\widetilde{\psi}}$, and then  
\begin{eqnarray*} \widetilde{f}_t=\widetilde{f}\oplus t\bullet \widetilde{g}=e^{(c+td)} (f\oplus t\bullet g) = e^{(c+td)} f_t.
\end{eqnarray*}
Proposition \ref{properties-a-energy} implies  that
$
\Ia(\widetilde{f}_t)=e^{ 2(c+td)} \Ia(f_t).
$ By the product rule, \eqref{first variation}, and Proposition \ref{properties-a-energy}, one gets, 
 \begin{align*} 
\delta \Ia(\widetilde{f},\widetilde{g})
&=\frac{e^{2c}}{2}\lim_{t\to0^{+}}\frac{e^{2td} -1 }{t} \cdot \lim_{t\to 0^+}  \Ia(f_t)  +\frac{e^{2c}}{2}\lim_{t\to0^{+}}\frac{ \Ia(f_t) - \Ia(f) }{t} \nonumber\\
&=e^{2c}\Big( \delta \Ia(f,g)+d \lim_{t\to 0^+}  \Ia(f_t) \Big).
\end{align*} In general, one cannot expect to have $\lim_{t\to 0^+} \Ia(f_t) =\Ia(f)$; but if it is the case, then 
  \begin{align}\label{add-1}
\delta \Ia(\widetilde{f},\widetilde{g})=e^{2c}\Big( \delta \Ia(f,g)+d  \Ia(f) \Big).
\end{align}

\subsection{The calculation of \texorpdfstring{$\delta \Ia(f, f)$}{}} \label{proportional case}

For the Riesz $\alpha$-energy, we can calculate $\delta \Ia(f, f)$ without any additional conditions.

\begin{proposition}\label{prop 4.3}
Let $\alpha>0$ and $f=e^{-\varphi}\in\LC$.  Then
\begin{eqnarray}\label{ff}
\delta \Ia(f,f)=\Big(\frac{n+\alpha}{2}\Big) \Ia(f)-\int_{\mathbb{R}^n}\int_{\mathbb{R}^n}   \frac{\varphi(x)f(x)f(y)}{ |x-y|^{n-\alpha}}dxdy.
\end{eqnarray} In particular, $\delta \Ia(f, f)\in (-\infty, \infty)$. 
\end{proposition}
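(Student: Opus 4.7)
The plan is to reduce $\Ia(f_t)$ with $f_t = f\oplus t\bullet f$ to a sufficiently explicit one-variable function of $t$ to permit direct differentiation. Using $\varphi^{**}=\varphi$ (since $\varphi$ is lower semi-continuous and convex) together with the identity $(c\phi)^* = c\phi^*(\cdot/c)$ for $c>0$, one checks that $((1+t)\varphi^*)^*(x) = (1+t)\varphi(x/(1+t))$, so
$$f_t(x) = e^{-(1+t)\varphi(x/(1+t))} = f\!\left(\tfrac{x}{1+t}\right)^{1+t}.$$
Performing the change of variables $x\mapsto(1+t)x$, $y\mapsto(1+t)y$ in the double integral $\Ia(f_t)$ (the Jacobians contribute $(1+t)^{2n}$ and the denominator contributes $(1+t)^{-(n-\alpha)}$) produces the clean identity
$$\Ia(f_t) = (1+t)^{n+\alpha}\int_{\Rn}\!\int_{\Rn}\frac{f(x)^{1+t}f(y)^{1+t}}{|x-y|^{n-\alpha}}\,dx\,dy = (1+t)^{n+\alpha}F(t),$$
where, using $f(x)^{1+t}=f(x)e^{-t\varphi(x)}$, the function $F$ is given by $F(t):=\int_{\Rn}\!\int_{\Rn}\frac{f(x)f(y)e^{-t(\varphi(x)+\varphi(y))}}{|x-y|^{n-\alpha}}\,dx\,dy$.

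By the product rule, $\frac{d}{dt}\big|_{t=0}\Ia(f_t) = (n+\alpha)F(0) + F'(0)$, with $F(0)=\Ia(f)$ by definition. If the derivative can be moved past the integral in $F$, then the symmetry in $(x,y)$ gives $F'(0) = -2\int_{\Rn}\!\int_{\Rn}\frac{\varphi(x)f(x)f(y)}{|x-y|^{n-\alpha}}\,dx\,dy$; halving and reassembling then yields formula \eqref{ff}. Finiteness of $\delta\Ia(f,f)$ will then follow at once from Propositions \ref{finiteness of I_q} and \ref{finite-22-33}.

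The one technical point I expect to be the main obstacle is justifying the interchange of $\frac{d}{dt}$ and the double integral in $F$, since the integrand is not absolutely dominated in an obvious way. By \eqref{In.CF13-11} there exists $c\in\R$ with $\varphi\geq c$, so for $t\in[0,1]$ one has $\varphi(x)+\varphi(y)\geq 2c$ and hence $e^{-t(\varphi(x)+\varphi(y))}\leq e^{2|c|}$. Writing
$$\frac{e^{-ts}-1}{t}=-s\int_0^1 e^{-uts}\,du$$
with $s=\varphi(x)+\varphi(y)$ and bounding the integrand uniformly in $u\in[0,1]$ yields the pointwise estimate
$$\left|\frac{e^{-t(\varphi(x)+\varphi(y))}-1}{t}\right|\leq e^{2|c|}\bigl(|\varphi(x)|+|\varphi(y)|\bigr),\qquad t\in(0,1].$$
Since $|\varphi|\leq\varphi+2|c|$, the majorant $(|\varphi(x)|+|\varphi(y)|)\frac{f(x)f(y)}{|x-y|^{n-\alpha}}$ is integrable by Propositions \ref{finiteness of I_q} and \ref{finite-22-33}, so dominated convergence applies and legitimizes the computation of $F'(0)$, completing the proof.
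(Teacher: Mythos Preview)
Your proof is correct and follows essentially the same route as the paper: compute $(f\oplus t\bullet f)(x)=e^{-(1+t)\varphi(x/(1+t))}$, change variables to obtain $\Ia(f_t)=(1+t)^{n+\alpha}F(t)$, and differentiate. The only difference is in the justification for passing the limit inside $F$: the paper first reduces to the case $\varphi\ge 0$ and invokes monotone convergence (handling $c<0$ by a shift), whereas you use a direct dominated-convergence bound via Propositions~\ref{finiteness of I_q} and~\ref{finite-22-33}, which avoids the case split.
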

\begin{proof}   As $f=e^{-\varphi}\in \LC$, it follows from \eqref{In.CF13-11} that $\varphi(x)\geq b|x|+c$. First, assumed  that  $c\geq0$. From \eqref{def-asp-sum}, one has   $(f\oplus t\bullet f)(x)=e^{-(1+t)\varphi\left(\frac{x}{1+t}\right)}.$ In this case,  replacing $\frac{x}{1+t}$ and $\frac{y}{1+t}$ by $x$ and $y$, respectively, one has 
\begin{align*}
    \Ia(f\oplus t\bullet f)
    &=(1+t)^{n+\alpha} \int_{\mathbb{R}^n}\int_{\mathbb{R}^n} \frac{ e^{-(1+t)\varphi(x)} e^{-(1+t)\varphi(y)} }{ |x-y|^{n-\alpha}}dxdy.
\end{align*} Hence, one can write $
2\delta \Ia(f,f)=B_1+B_2$, where $B_1$ and $B_2$ are 
 \begin{align*} 
B_1 &=\lim_{t\to0^{+}}\frac{ (1+t)^{n+\alpha}-1}{t}\int_{\mathbb{R}^n}\int_{\mathbb{R}^n} \frac{e^{-(1+t)\varphi\left(x\right)} e^{-(1+t)\varphi\left(y\right)}} {|x-y|^{n-\alpha}}dxdy,\\
B_2 &=\lim_{t\to0^{+}}\int_{\mathbb{R}^n}\int_{\mathbb{R}^n}  \frac{ e^{-(1+t)\varphi\left(x\right)} e^{-(1+t)\varphi\left(y\right)} - e^{-\varphi(x)}e^{-\varphi(y)} }{t|x-y|^{n-\alpha}}dxdy.
\end{align*} As $\varphi\geq 0$, one gets $(1+t)\varphi$ is increasing on $t$, and hence the   monotone convergence theorem can be applied to obtain 
\begin{align*}
 \lim_{t\to0^{+}} \int_{\mathbb{R}^n}\int_{\mathbb{R}^n} \frac{e^{-(1+t)\varphi\left(x\right)} e^{-(1+t)\varphi\left(y\right)}} {|x-y|^{n-\alpha}}dxdy 
= \int_{\mathbb{R}^n}\int_{\mathbb{R}^n}\lim_{t\to0^{+}} \frac{e^{-(1+t)\varphi\left(x\right)} e^{-(1+t)\varphi\left(y\right)}} {|x-y|^{n-\alpha}}dxdy= \Ia(f).
\end{align*} This in particular yields $\lim_{t\to 0^+} \Ia(f\oplus t\bullet f)=\Ia(f).$ Moreover, 
\begin{align}
B_1
=\left(\lim_{t\to 0^{+}}\frac{ (1+t)^{n+\alpha}-1}{t} \right) \cdot \left(\lim_{t\to0^{+}} \int_{\mathbb{R}^n}\int_{\mathbb{R}^n} \frac{e^{-(1+t)\varphi (x)} e^{-(1+t)\varphi(y)}}{ |x-y|^{n-\alpha} }dxdy \right)=(n+\alpha)  \Ia(f).\label{estimate-B1}
\end{align}

From the basic inequality $e^s\geq  1+s$ for $s\geq 0$,  it is easily checked that $\frac{1-e^{-\beta t}}{t}$ is   decreasing  on $t>0$ when $\beta\geq 0$, due to the fact that  $$\frac{d}{dt}\Big(\frac{1-e^{-\beta t}}{t}\Big)=\frac{e^{-\beta t}(\beta t+1)-1}{t^2}\leq0.$$   This fact can be applied to get, due to $\varphi\geq 0,$ that for any  $x,y\in\R^n$ $$0\leq \frac{1-e^{- t(\varphi(x)+\varphi(y))}}{t} $$ is decreasing on $t>0$. Moreover, for any $x, y\in \Rn$, one gets \begin{align*}  0\leq \frac{1-e^{- t(\varphi(x)+\varphi(y))}}{t}\leq \lim_{t\to 0^+} \frac{1-e^{- t(\varphi(x)+\varphi(y))}}{t}= \varphi(x)+\varphi(y),\end{align*}
where the value of $\varphi(x)+\varphi(y)$   could be $+\infty$.  
The monotone   convergence theorem, the Fubini's theorem, and \eqref{ff0} immediately yield  
 \begin{align*}
 B_2 
&=\int_{\mathbb{R}^n}\int_{\mathbb{R}^n} \lim_{t\to0^{+}}  \frac{ e^{-(1+t)\varphi\left(x\right)} e^{-(1+t)\varphi\left(y\right)} - e^{-\varphi(x)}e^{-\varphi(y)} }{t|x-y|^{n-\alpha}}dxdy \\&=\int_{\mathbb{R}^n}\int_{\mathbb{R}^n} \lim_{t\to0^{+}}  \frac{ e^{-t(\varphi\left(x\right)+\varphi\left(y\right))} - 1}{t} \cdot \frac{e^{-\varphi(x)}e^{-\varphi(y)}}{|x-y|^{n-\alpha}}   dxdy \\
&=-\int_{\mathbb{R}^n}\int_{\mathbb{R}^n} \frac{\varphi(x)+\varphi(y)}{ |x-y|^{n-\alpha}} f(x)f(y)dxdy\\
&=-2\int_{\mathbb{R}^n}\int_{\mathbb{R}^n}  \frac{\varphi(x)f(x)f(y)}{ |x-y|^{n-\alpha}} dxdy.
\end{align*} This, together with \eqref{estimate-B1}, implies that \begin{align*}
    \delta \Ia(f, f)=\frac{B_1+B_2}{2} =\Big(\frac{n+\alpha}{2}\Big) \Ia(f)-\int_{\mathbb{R}^n}\int_{\mathbb{R}^n}  \frac{\varphi(x)f(x)f(y)}{ |x-y|^{n-\alpha}}dxdy.
\end{align*} This concludes the proof when $\varphi\geq 0.$  

For $c<0$, let $\widetilde{\varphi}=\varphi-c\geq0$ and $\widetilde{f}\!=\!e^{-\widetilde{\varphi}}=e^c f$. Then, $ \widetilde{f}\oplus t\bullet \widetilde{f}\!=\! e^{(1+t)c} (f\oplus t\bullet f)$ and by \eqref{homogeneity}, $$\Ia(f\oplus t\bullet f)=e^{-2(1+t)c}  \Ia(\widetilde{f}\oplus t\bullet \widetilde{f}). $$  By the product rule, \eqref{homogeneity}, \eqref{first variation} and the fact that $\lim_{t\to 0^+} \Ia(f\oplus t\bullet f)=\Ia(f),$ one gets, $$\delta \Ia(f, f)=e^{-2c} \delta \Ia(\widetilde{f}, \widetilde{f})-ce^{-2c} \Ia(\widetilde{f})=e^{-2c} \delta \Ia(\widetilde{f}, \widetilde{f})- c \Ia(f).$$ Applying \eqref{ff} to  $\widetilde{f}$ and by  \eqref{homogeneity}, one gets  \begin{align*}
\delta \Ia(\widetilde{f},\widetilde{f}) &=\Big(\frac{n+\alpha}{2}\Big) \Ia(\widetilde{f})-\int_{\mathbb{R}^n}\int_{\mathbb{R}^n}   \frac{\widetilde{\varphi}(x)\widetilde{f}(x)\widetilde{f}(y)}{ |x-y|^{n-\alpha}}dxdy\\ 
&=e^{2c}\Big(\frac{n+\alpha}{2}+c\Big) \Ia(f)-e^{2c} \int_{\mathbb{R}^n}\int_{\mathbb{R}^n}   \frac{\varphi(x)f(x)f(y)}{ |x-y|^{n-\alpha}}dxdy.   
\end{align*} This further implies that  
 \begin{align*}
 \delta \Ia(f,f)&=e^{-2c}\delta \Ia(\widetilde{f},\widetilde{f})-c\Ia(f) \\
 &=\Big(\frac{n+\alpha}{2}+c\Big) \Ia(f)- \int_{\mathbb{R}^n}\int_{\mathbb{R}^n}   \frac{\varphi(x)f(x)f(y)}{|x-y|^{n-\alpha}}dxdy -c\Ia(f)\\
 &=\Big(\frac{n+\alpha}{2} \Big) \Ia(f)- \int_{\mathbb{R}^n}\int_{\mathbb{R}^n}   \frac{\varphi(x)f(x)f(y)}{ |x-y|^{n-\alpha}}dxdy.
\end{align*} Finally,  $\delta \Ia(f,f)\!\in\! (-\infty, \infty)$ follows from  Propositions \ref{finiteness of I_q} and \ref{finite-22-33}.  This completes the proof.
\end{proof}

Let  $\varphi^*$ and $\psi^*$ be such that  $\psi^*=\beta_1 \varphi^*+\beta_2$ for some $\beta_1>0$ and $\beta_2\in \R$.  From \eqref{def-dual}, the lower semi-continuity  and convexity of $\varphi$ and $\psi$, we know  $g=e^{\beta_2} (\beta_1 \bullet f)$, and  from \eqref{def-asp-sum},   \begin{align} (f\oplus t\bullet g)(x)=e^{\beta_2t} e^{-(1+\beta_1 t)\varphi\left(\frac{x}{1+\beta_1 t}\right)}. \label{Asp-sum-spe-case}\end{align}
It follows from \eqref{Asp-sum-spe-case} and  Proposition \ref{properties-a-energy} that  
\begin{align*}
    \Ia(f\oplus t\bullet g)&=e^{2\beta_2t} \int_{\mathbb{R}^n}\int_{\mathbb{R}^n} \frac{ e^{-(1+\beta_1 t)\varphi\left(\frac{x}{1+\beta_1 t}\right)} e^{-(1+\beta_1 t)\varphi\left(\frac{y}{1+\beta_1 t}\right)} }{ |x-y|^{n-\alpha}}dxdy \nonumber \\
&=e^{2\beta_2t} (1+\beta_1t)^{n+\alpha}\int_{\mathbb{R}^n}\int_{\mathbb{R}^n} \frac{ e^{-(1+\beta_1 t)\varphi\left(x\right)} e^{-(1+\beta_1 t)\varphi\left(y\right)} }{ |x-y|^{n-\alpha}}dxdy.
\end{align*}

  With the help of Proposition \ref{prop 4.3}, we obtain  the following  result. 
  \begin{corollary}\label{variational-formula-beta}
      Let $\alpha>0$, $f=e^{-\varphi}\in\LC \ \text{and}\ g=e^{-\psi}\in\LC$. Assume that $\psi^*=\beta_1 \varphi^*+\beta_2$ for some $\beta_1>0$ and $\beta_2 \in\R$.  Then
\begin{eqnarray}
\delta \Ia(f,g)=\beta_1 \delta \Ia(f, f) +\beta_2 \Ia(f). \label{formula 4.6}
\end{eqnarray}
  \end{corollary}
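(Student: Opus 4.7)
The plan is to reduce the computation of $\delta\Ia(f,g)$ to that of $\delta\Ia(f,f)$, which has already been computed in Proposition \ref{prop 4.3}, by exploiting the fact that $g$ is, up to scalar multiplication, a dilation of $f$.

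First, I would unpack the hypothesis $\psi^* = \beta_1 \varphi^* + \beta_2$ exactly as done in the excerpt just above the statement, to write out
\begin{equation*}
(f \oplus t\bullet g)(x) \;=\; e^{t\beta_2}\, e^{-(1+t\beta_1)\varphi\left(\frac{x}{1+t\beta_1}\right)}.
\end{equation*}
The point of this formula is that if we set $s = t\beta_1$, the exponential factor to the right of $e^{t\beta_2}$ is precisely $(f \oplus s\bullet f)(x)$ as computed in the proof of Proposition \ref{prop 4.3}. Thus
\begin{equation*}
f \oplus t\bullet g \;=\; e^{t\beta_2}\bigl(f \oplus (t\beta_1)\bullet f\bigr).
\end{equation*}
Applying the first equation of \eqref{homogeneity} from Proposition \ref{properties-a-energy} with $c = e^{t\beta_2}$, this yields the clean identity
\begin{equation*}
\Ia\bigl(f \oplus t\bullet g\bigr) \;=\; e^{2t\beta_2}\, \Ia\bigl(f \oplus (t\beta_1)\bullet f\bigr).
\end{equation*}

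Next, I would differentiate both sides with respect to $t$ at $t=0^+$. From Proposition \ref{prop 4.3}, the one-sided limit $\lim_{s\to 0^+}\frac{\Ia(f\oplus s\bullet f)-\Ia(f)}{s} = 2\,\delta\Ia(f,f)$ exists and is finite; in particular $s \mapsto \Ia(f \oplus s\bullet f)$ is right-continuous at $s=0$ with value $\Ia(f)$. Since $\beta_1 > 0$, the composition $t \mapsto \Ia(f \oplus (t\beta_1)\bullet f)$ therefore has a right-derivative at $t=0$ equal to $2\beta_1\,\delta\Ia(f,f)$ by the chain rule. The factor $e^{2t\beta_2}$ is smooth with value $1$ and derivative $2\beta_2$ at $t=0$, so the one-sided product rule gives
\begin{equation*}
\lim_{t \to 0^+} \frac{\Ia(f\oplus t\bullet g) - \Ia(f)}{t} \;=\; 2\beta_2\, \Ia(f) \;+\; 2\beta_1\, \delta\Ia(f,f).
\end{equation*}
Dividing by $2$ and invoking the definition \eqref{first variation} of $\delta\Ia(f,g)$ yields formula \eqref{formula 4.6}.

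There is essentially no genuine obstacle: the hard analytic work (namely the interchange of limit and integral, handled via the monotone convergence theorem after the shift $\varphi \mapsto \varphi - c$) has already been carried out inside Proposition \ref{prop 4.3}. The only point that requires a moment of care is the one-sided product rule, but this is elementary given that one factor is smooth and the other is one-sidedly differentiable at $0$ with a finite derivative.
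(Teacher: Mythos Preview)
Your proof is correct and follows essentially the same route as the paper: the paper establishes the identity $(f\oplus t\bullet g)(x)=e^{\beta_2 t}\,e^{-(1+\beta_1 t)\varphi(x/(1+\beta_1 t))}$ just before the corollary, which is exactly your observation that $f\oplus t\bullet g = e^{t\beta_2}\bigl(f\oplus (t\beta_1)\bullet f\bigr)$, and then leaves the differentiation (via homogeneity and Proposition~\ref{prop 4.3}) to the reader. Your write-up of the product/chain rule step is clean and correct.
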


 \subsection{An integral representation of \texorpdfstring{$\delta  \Ia(f, f)$}{}}

In subsection \ref{proportional case}, we gave a   formula for $\delta \Ia(f,f)$, but it is not the right one we want. Based on this result, in this subsection, we will establish the following integral representation for $\delta \Ia(f,f)$.

\begin{proposition}\label{Le.ff-2}
Let $\alpha>0$ and  $f=e^{-\varphi}\in{\rm LC}_n$. If $o\in{\rm int}(K_f)$,  then
    \begin{align}\label{delta.ff}
\delta \Ia(f,f)
=\int_{\R^n}  \varphi^*(\nabla\varphi(x)) I_{\alpha}(f,x) f(x) dx+\int_{\partial K_f} h_{K_f}(\nu_{K_f}(x)) I_{\alpha}(f,x)f(x)d\mathcal{H}^{n-1}(x).
\end{align}
\end{proposition}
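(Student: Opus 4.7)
The plan is to start from Proposition \ref{prop 4.3}, which already gives
\[
\delta\Ia(f,f) = \tfrac{n+\alpha}{2}\Ia(f) - \int_{K_f}\varphi(x)\,I_{\alpha}(f,x)\,f(x)\,dx,
\]
and rewrite the bulk $\varphi$-term via the duality identity \eqref{basicequ}, namely $\varphi^*(\nabla\varphi(x))=\langle x,\nabla\varphi(x)\rangle-\varphi(x)$. After this substitution the target formula \eqref{delta.ff} becomes equivalent to the geometric identity
\[
\tfrac{n+\alpha}{2}\Ia(f) = \int_{K_f}\langle x,\nabla\varphi(x)\rangle I_{\alpha}(f,x)f(x)\,dx + \int_{\partial K_f}h_{K_f}(\nu_{K_f}(x))I_{\alpha}(f,x)f(x)\,d\mathcal{H}^{n-1}(x),
\]
where I use that $h_{K_f}(\nu_{K_f}(x))=\langle x,\nu_{K_f}(x)\rangle$ at $\mathcal{H}^{n-1}$-a.e.\ point of $\partial K_f$ and that the hypothesis $o\in\mathrm{int}(K_f)$ guarantees $h_{K_f}\ge 0$ on $S^{n-1}$.

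To prove this geometric identity, I would integrate by parts in $x$. Writing $\nabla\varphi(x)f(x)=-\nabla f(x)$ and applying Fubini, the first right-hand term equals
\[
-\int_{K_f}f(y)\int_{K_f}\Bigl\langle\frac{x}{|x-y|^{n-\alpha}},\nabla f(x)\Bigr\rangle\,dx\,dy.
\]
Applying the Gauss--Green formula to the vector field $V_y(x)=x/|x-y|^{n-\alpha}$ on the Lipschitz domain $K_f$, one computes
\[
\mathrm{div}_xV_y(x) = \frac{n}{|x-y|^{n-\alpha}} + (\alpha-n)\frac{\langle x,x-y\rangle}{|x-y|^{n-\alpha+2}}, \qquad \langle V_y(x),\nu_{K_f}(x)\rangle=\frac{h_{K_f}(\nu_{K_f}(x))}{|x-y|^{n-\alpha}}.
\]
Integrating in $y$ against $f(y)$, the first term of $\mathrm{div}_xV_y$ contributes $n\Ia(f)$, while the second is converted via the symmetrization
\[
\int_{K_f}\int_{K_f}\frac{\langle x,x-y\rangle f(x)f(y)}{|x-y|^{n-\alpha+2}}\,dx\,dy = \tfrac{1}{2}\Ia(f),
\]
which follows from swapping $x\leftrightarrow y$ and using $\langle x,x-y\rangle+\langle y,y-x\rangle=|x-y|^2$. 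Together these give $n\Ia(f)+\tfrac{\alpha-n}{2}\Ia(f)=\tfrac{n+\alpha}{2}\Ia(f)$, while the surface trace produces exactly the boundary term in \eqref{delta.ff}.

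The main obstacle is making the integration by parts rigorous: $\varphi$ is only convex so $\nabla\varphi$ exists merely $\mathcal{L}^n$-a.e.\ on $\mathrm{int}(K_f)$, $K_f$ may be unbounded with only a Lipschitz boundary, and the kernel $|x-y|^{\alpha-n}$ is singular on the diagonal when $\alpha<n$. The regularity is handled by observing that $f\in W^{1,1}_{\mathrm{loc}}(\mathrm{int}(K_f))$ and invoking the Gauss--Green formula for Sobolev functions on Lipschitz domains; the integrability of every piece follows from Propositions \ref{estimate-riesz potential}, \ref{finiteness of I_q}, \ref{bounded}, and \ref{finite-22-33} together with the exponential decay \eqref{upperboundoff0}. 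For unbounded $K_f$ one truncates by $K_f\cap B_2^n(o,R)$, performs the argument there, and verifies in the limit $R\to\infty$ that the spurious contribution on $K_f\cap\partial B_2^n(o,R)$ vanishes, using \eqref{FYZZ-1--1}--\eqref{FYZZ-1--2} and Proposition \ref{bounded} to control the flux at infinity.
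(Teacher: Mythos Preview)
Your strategy coincides with the paper's: it too reduces, via Proposition~\ref{prop 4.3} and the duality identity \eqref{basicequ}, to the geometric identity you state, applies the divergence theorem to the vector field $x/|x-y|^{n-\alpha}$ on a ball-truncated domain (their Lemma~\ref{Le.div.bound.}), symmetrizes in $(x,y)$ using $\langle x,x-y\rangle+\langle y,y-x\rangle=|x-y|^2$, and sends the truncation radius to infinity.

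There is, however, one genuine gap in your treatment of the diagonal singularity when $0<\alpha\le 1$. The term $\langle x,x-y\rangle/|x-y|^{n-\alpha+2}$ appearing in $\mathrm{div}_xV_y$ behaves like $|x-y|^{-(n-\alpha+1)}$ near $x=y$, and this is \emph{not} locally integrable in $\R^n$ once $\alpha\le 1$. None of the propositions you invoke addresses this: they all concern the kernel $|x-y|^{\alpha-n}$ itself, not its gradient. Your symmetrization therefore relies on Fubini for an integrand that need not be absolutely integrable, and the Gauss--Green step for a vector field whose divergence is not in $L^1_{\mathrm{loc}}$.

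The paper repairs this by working throughout with the mollified kernel $(|x-y|^2+\varepsilon)^{(n-\alpha)/2}$ for $0<\alpha<n$ (their $\varepsilon_\alpha$): every piece is then pointwise bounded by $\varepsilon$-dependent constants---for instance they use $\int_{K_f}\frac{|x|\,|x-y|\,f(x)}{(|x-y|^2+\varepsilon)^{(n-\alpha)/2+1}}\,dx\le \varepsilon^{(\alpha-n-1)/2}\int_{K_f}|x|f(x)\,dx$---so Fubini and the divergence theorem are legitimate for each fixed $\varepsilon>0$, and the symmetrized identity is established for the regularized kernel (Lemma~\ref{Le.epsilon.bound.}). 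Only at the end is $\varepsilon\to 0^+$ taken, via monotone convergence after normalizing $\varphi(o)=0$ so that $h_{K_f}\ge 0$ and $\varphi^*(\nabla\varphi)\ge 0$, giving all relevant integrands a sign. You should insert this regularization (or an equivalent excision of a neighbourhood of the diagonal) to close the argument.
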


To establish Propostion \ref{Le.ff-2}, we need the following lemmas. For convenience, for $\varepsilon\in (0,1)$, let 
\begin{equation}\label{vare-alpha}
  \varepsilon_{\alpha} =\left\{
\begin{array}{lll}
\varepsilon&\text{if} &0<\alpha<n, \\
   0&  \text{if} &\alpha\geq n.
\end{array}
  \right.
\end{equation}

\begin{lemma}\label{Le.div.bound.}
Let $\alpha>0$ and $f=e^{-\varphi}\in{\rm LC}_n$. Then,  for any $0<\varepsilon<1$ and any given $y\in K_f$,
    \begin{align*}     \lim_{R\to+\infty}\int_{B_2^n(y,R)\cap K_f} {\rm div} \left(\frac{xf(x)}{{(|x-y|^2+\varepsilon_{\alpha})^{\frac{n-\alpha}{2}}}}\right) dx=\int_{\partial K_f}\frac{\langle x, \nu_{K_f}(x)\rangle f(x)}{(|x-y|^2+\varepsilon_{\alpha})^{\frac{n-\alpha}{2}}}d\mathcal{H}^{n-1}(x),
\end{align*}
where ${\rm div}(F)$ denotes the divergence of a vector field $F$ and $\varepsilon_{\alpha}$ is defined in \eqref{vare-alpha}.
\end{lemma}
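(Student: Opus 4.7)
The plan is to apply the divergence theorem to the bounded convex set $E_R := B_2^n(y, R) \cap K_f$ and then send $R \to \infty$. Writing
$$F(x) = \frac{x f(x)}{(|x-y|^2 + \varepsilon_\alpha)^{(n-\alpha)/2}},$$
note that for $\alpha \geq n$ we have $\varepsilon_\alpha = 0$ and $(n-\alpha)/2 \leq 0$, so the denominator reduces to $|x-y|^{n-\alpha}$ with a harmless nonpositive exponent; for $0 < \alpha < n$, $\varepsilon_\alpha = \varepsilon > 0$ keeps the denominator bounded below by $\varepsilon^{(n-\alpha)/2}$. Since $\varphi$ is convex, $f = e^{-\varphi}$ is locally Lipschitz on $\mathrm{int}(K_f)$, so $F$ is locally Lipschitz there, making the Gauss--Green formula available on the bounded convex body $E_R$.

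The boundary $\partial E_R$ splits, up to an $\mathcal{H}^{n-1}$-null set, into $\partial K_f \cap B_2^n(y,R)$ with outer normal $\nu_{K_f}$, and $K_f \cap \partial B_2^n(y,R)$ with outer normal $(x-y)/R$. The divergence theorem then yields
\begin{align*}
\int_{E_R} \mathrm{div}(F)\, dx
&= \int_{\partial K_f \cap B_2^n(y,R)} \frac{\langle x, \nu_{K_f}(x)\rangle f(x)}{(|x-y|^2 + \varepsilon_\alpha)^{(n-\alpha)/2}}\, d\mathcal{H}^{n-1}(x) \\
&\quad + \int_{K_f \cap \partial B_2^n(y,R)} \frac{\langle x, x-y\rangle f(x)}{R\,(R^2 + \varepsilon_\alpha)^{(n-\alpha)/2}}\, d\mathcal{H}^{n-1}(x).
\end{align*}
On the spherical piece, $|x-y| = R$ and $|x| \leq R + |y|$, so by \eqref{upperboundoff0} the integrand is bounded by $(R + |y|)e^{-c-b(R-|y|)}/R^{n-\alpha}$; multiplying by the area $n\omega_n R^{n-1}$ produces a quantity of order $R^{\alpha} e^{-bR}$, which tends to $0$ as $R \to \infty$.

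It remains to pass to the limit in the $\partial K_f$-integral by dominated convergence: the integrands converge pointwise $\mathcal{H}^{n-1}$-a.e.\ to the claimed integrand over all of $\partial K_f$, with dominator $|x| f(x)(|x-y|^2 + \varepsilon_\alpha)^{-(n-\alpha)/2}$, bounded by $\varepsilon^{-(n-\alpha)/2}|x| f(x)$ when $0 < \alpha < n$ and by $|x| f(x)(|x|+|y|)^{\alpha - n}$ when $\alpha \geq n$. The exponential decay of $f$ from \eqref{upperboundoff0} together with the polynomial growth of $\mathcal{H}^{n-1}(\partial K_f \cap B_2^n(0,r))$ in $r$ (a consequence of convexity of $K_f$) makes this dominator integrable on $\partial K_f$. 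The main technical hurdle I anticipate is rigorously justifying the Gauss--Green identity on $E_R$ when $F$ is only locally Lipschitz inside $\mathrm{int}(K_f)$ and $\partial E_R$ is merely Lipschitz; this is handled by the standard Gauss--Green formula for Lipschitz vector fields on sets of finite perimeter, with the boundary trace of $f$ taken from inside $K_f$.
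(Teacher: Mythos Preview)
Your proposal is correct and follows essentially the same route as the paper: apply the divergence theorem on $B_2^n(y,R)\cap K_f$, split the boundary into the spherical cap and the piece on $\partial K_f$, kill the spherical contribution via the exponential decay \eqref{upperboundoff0}, and pass to the limit on the $\partial K_f$ piece. The only noteworthy difference is in the justification of the $\partial K_f$ limit: you invoke dominated convergence with integrability coming from the polynomial bound $\mathcal{H}^{n-1}(\partial K_f\cap B_2^n(o,r))\le n\omega_n r^{n-1}$ (a direct consequence of convexity) together with the exponential decay of $f$, whereas the paper shows the tail integral over $\partial K_f\setminus\Xi_2(R)$ vanishes by appealing to the fact, from \cite[Proposition~1.6]{Rotem2022}, that $\int_{\partial K_f}e^{-\varphi/2}\,d\mathcal{H}^{n-1}<\infty$. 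Your argument is slightly more self-contained; the paper's is slightly shorter given the citation.
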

\begin{proof}
The divergence theorem yields that for any given $y\in K_f$,
\begin{align*}
&\lim_{R\to+\infty}\int_{B_2^n(y,R)\cap K_f} {\rm div} \left(\frac{xf(x)}{{(|x-y|^2+\varepsilon_{\alpha})^{\frac{n-\alpha}{2}}}}\right) dx\nonumber\\
 &\quad =\lim_{R\to+\infty}\int_{\partial (B_2^n(y, R)\cap K_f)}\frac{\langle x, \nu_{B_2^n(y, R)\cap K_f}(x)\rangle f(x)}{(|x-y|^2+\varepsilon_{\alpha})^{\frac{n-\alpha}{2}}}d\mathcal{H}^{n-1}(x) \nonumber \\
 &\quad =\lim_{R\to+\infty}\int_{\Xi_1(R)}\frac{\langle x, \nu_{B_2^n(y, R) }(x)\rangle f(x)}{(|x-y|^2+\varepsilon_{\alpha})^{\frac{n-\alpha}{2}}}d\mathcal{H}^{n-1}(x) +\lim_{R\to+\infty}\int_{\Xi_2(R)}\frac{\langle x, \nu_{  K_f}(x)\rangle f(x)}{(|x-y|^2+\varepsilon_{\alpha})^{\frac{n-\alpha}{2}}}d\mathcal{H}^{n-1}(x),
\end{align*}
where 
$$
\Xi_1(R)= \partial \big(B_2^n(y, R)\big) \bigcap \partial \big(B_2^n(y, R)\cap K_f\big)\quad \text{and}\quad \Xi_2(R)=  \partial \big(B_2^n(y, R)\cap K_f\big)\setminus \Xi_1(R).
$$    
If  $\Xi_1(R)\subseteq \partial B_2^n(y,R)$ is not empty, one has
\begin{align*}
0\leq\lim_{R\to+\infty}\int_{\Xi_1(R)}\frac{\langle x, \nu_{B_2^n(y, R) }(x)\rangle f(x)}{(|x-y|^2+\varepsilon_{\alpha})^{\frac{n-\alpha}{2}}}d\mathcal{H}^{n-1}(x)\leq\lim_{R\to+\infty}\int_{S^{n-1}}\frac{(R+\langle y,  v\rangle) f(Rv+y)}{(R^2+\varepsilon_{\alpha})^{\frac{n-\alpha}{2}}}R^{n-1}dv,
\end{align*}
where we have used the substitutuion $x=y+Rv$ for $v\in S^{n-1}$. We can assume that $R>2|y|$ for fixed $y\in\R^n$ which ensures that $0\leq R+\langle v, y\rangle<2R$. Note that $(R^2+\varepsilon_{\alpha})^{\frac{\alpha-n}{2}}\leq   R^{\alpha-n}$ for any $\alpha>0$. Combining with \eqref{In.CF13-11}, one has 
 \begin{align*}   \lim_{R\to+\infty}\int_{S^{n-1}}\frac{(R+\langle y, v\rangle)f(Rv+y)}{(R^2+\varepsilon_{\alpha})^{\frac{n-\alpha}{2}}}R^{n-1}dv  
 &\leq \lim_{R\to+\infty} 2  R^{\alpha} \int_{S^{n-1}} e^{-b|Rv+y|-c}dv\nonumber \\&\leq \lim_{R\to+\infty} 2  R^{\alpha} \int_{S^{n-1}} e^{-\frac{bR}{2}-c}dv\nonumber \\&
=2 n\omega_n\lim_{R\to+\infty} R^{\alpha}   e^{-\frac{bR}{2}-c} =0,  
\end{align*}
 where in the last inequality we have used that $|Rv+y|\geq R-|y|>\frac{R}{2}$. This means 
\begin{align}\label{R-infty-1}
\lim_{R\to+\infty}\int_{\Xi_1(R)}\frac{\langle x, \nu_{B_2^n(y, R) }(x)\rangle f(x)}{(|x-y|^2+\varepsilon_{\alpha})^{\frac{n-\alpha}{2}}}d\mathcal{H}^{n-1}(x)=0.
\end{align}

 On the other hand, if   $\Xi_2(R)\subset \partial K_f$ is not empty, from \eqref{In.CF13-11}  we have 
\begin{align*}
0\leq\!\lim_{R\to+\infty}\int_{\partial K_f\setminus\Xi_2(R)} \!\left|\frac{\langle x, \nu_{K_f}(x)\rangle f(x)}{(|x\!-\!y|^2\!+\!\varepsilon_{\alpha})^{\frac{n-\alpha}{2}}}\right|d\mathcal{H}^{n-1}(x)\leq \!\lim_{R\to+\infty}\int_{\partial K_f\setminus\Xi_2(R)}\frac{|x| e^{-\frac{b|x|+c}{2}}e^{-\frac{\varphi(x)}{2}}}{(|x\!-\!y|^2\!+\!\varepsilon_{\alpha})^{\frac{n-\alpha}{2}}}d\mathcal{H}^{n-1}(x).
\end{align*}
For any $x\in \partial K_f\setminus\Xi_2(R)$, there exist $u\in S^{n-1}$ and $s>R$ such that $x-y=su$.    As  in the proof of \eqref{R-infty-1}, we can assume that $R>2|y|$ for fixed $y\in\R^n$ which ensures that $|x|\geq |x-y|-|y|>\frac{s}{2}$ and $(|x-y|^2+\varepsilon_{\alpha})^{\frac{\alpha-n}{2}}\leq  s^{\alpha-n}$. Combining with the basic inequality $te^{-\beta t}\leq \frac{1}{\beta e}$ (where  $\beta>0$ and $t\geq 0$), we have 
\begin{align*}
  \lim_{R\to+\infty}\int_{\partial K_f\setminus\Xi_2(R)}\frac{|x| e^{-\frac{b|x|+c}{2}}e^{-\frac{\varphi(x)}{2}}}{(|x\!-\!y|^2+\!\varepsilon_{\alpha})^{\frac{n-\alpha}{2}}}d\mathcal{H}^{n-1}(x) \leq \frac{4 }{be^{1+\frac{c}{2}}}\int_{\partial K_ f}\!\!\! e^{-\frac{\varphi(x)}{2}}d\mathcal{H}^{n-1}(x) \left(\lim_{s\to+\infty}\! s^{\alpha-n}   e^{-\frac{bs}{8}}\!\right) \! =0.
\end{align*}
Here, we have used the fact proved in \cite[Proposition 1.6]{Rotem2022} that $\int_{\partial K_ f}f(x)d\mathcal{H}^{n-1}(x)$ is finite for any $f\in\LC$.  Therefore,
\begin{align}\label{R-infty-2}
\lim_{R\to+\infty}\int_{\Xi_2(R)}\frac{\langle x, \nu_{K_f}(x)\rangle f(x)}{(|x-y|^2+\varepsilon_{\alpha})^{\frac{n-\alpha}{2}}}d\mathcal{H}^{n-1}(x)=\int_{\partial K_f}\frac{\langle x, \nu_{K_f}(x)\rangle f(x)}{(|x-y|^2+\varepsilon_{\alpha})^{\frac{n-\alpha}{2}}}d\mathcal{H}^{n-1}(x).
\end{align}
The desired formula comes from the above two equalities \eqref{R-infty-1} and  \eqref{R-infty-2}. 
\end{proof}

From the definition of Legendre transform (see \eqref{def-dual}) and \eqref{basicequ},  the following holds: for almost all $x\in K_f$ where $\varphi$ is differentiable, 
 \begin{align}
     \langle x, \nabla \varphi(x)\rangle=\varphi(x)+\varphi^*(\nabla\varphi(x)) \geq \varphi(x)-\varphi(o)\geq c-\varphi(o)>-\infty,\label{positive-nablavarphi-1}
 \end{align} where $c$ is the constant given by \eqref{In.CF13-11}. 

\begin{lemma}\label{Le.epsilon.bound.} Let $\alpha>0$ and $f=e^{-\varphi}\in{\rm LC}_n$. Then,  
    \begin{align*} 
\lim_{\varepsilon\to0^+}\!\int_{K_f}\!\! \left(\int_{K_f}
\!\frac{\langle x,\nabla\varphi(x)\rangle f(x)f(y)}{(|x-y|^2+\varepsilon_{\alpha})^{\frac{n-\alpha}{2}}}dx+\!\int_{\partial K_f}\!\!\frac{\langle x, \nu_{K_f}(x)\rangle f(x)f(y)}{(|x-y|^2+\varepsilon_{\alpha})^{\frac{n-\alpha}{2}}}d\mathcal{H}^{n-1}(x)\right)dy =\bigg(\!\frac{n+\alpha}{2}\!\bigg) \Ia(f).
\end{align*}
\end{lemma}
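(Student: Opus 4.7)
My plan is to apply Lemma \ref{Le.div.bound.} to the vector field $F_y(x) = xf(x)(|x-y|^2+\varepsilon_\alpha)^{-(n-\alpha)/2}$, compute $\mathrm{div}_x F_y$ explicitly via the product rule, and then symmetrize in $(x,y)$ to isolate the factor $(n+\alpha)/2$.

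First, using $\nabla f = -(\nabla\varphi)f$ (valid almost everywhere, since $\varphi$ is differentiable a.e.\ in $\mathrm{int}(K_f)$), the product rule yields
\[\mathrm{div}_x F_y(x) = \frac{(n-\langle x,\nabla\varphi(x)\rangle)f(x)}{(|x-y|^2+\varepsilon_\alpha)^{(n-\alpha)/2}} - \frac{(n-\alpha)\langle x, x-y\rangle f(x)}{(|x-y|^2+\varepsilon_\alpha)^{(n-\alpha+2)/2}}.\]
Substituting into Lemma \ref{Le.div.bound.} and rearranging expresses the bracketed quantity of our lemma, for each fixed $y\in K_f$, as a combination of $\int_{K_f} f(x)(\cdots)^{-(n-\alpha)/2}\,dx$ and $\int_{K_f}\langle x, x-y\rangle f(x)(\cdots)^{-(n-\alpha+2)/2}\,dx$. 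I then multiply through by $f(y)$ and integrate in $y$ over $K_f$.

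Second, I symmetrize the second iterated integral. By Fubini–Tonelli (justified via the joint integrability bounds of Section \ref{Section:-3}: $|\langle x,x-y\rangle|\le|x|\,|x-y|$ combined with Propositions \ref{estimate-riesz potential}, \ref{bounded} and \ref{finite-22-33}), swapping $x\leftrightarrow y$ and averaging gives
\[\int_{K_f}\!\!\int_{K_f}\!\frac{\langle x,x-y\rangle f(x)f(y)\,dx\,dy}{(|x-y|^2+\varepsilon_\alpha)^{(n-\alpha+2)/2}} = \frac{1}{2}\int_{K_f}\!\!\int_{K_f}\!\frac{|x-y|^2 f(x)f(y)\,dx\,dy}{(|x-y|^2+\varepsilon_\alpha)^{(n-\alpha+2)/2}}.\]
Using the identity $|x-y|^2 = (|x-y|^2+\varepsilon_\alpha) - \varepsilon_\alpha$ and collecting coefficients, the left-hand side of the lemma becomes
\[\mathrm{LHS}(\varepsilon) = \frac{n+\alpha}{2}\int_{K_f}\!\!\int_{K_f}\!\frac{f(x)f(y)\,dx\,dy}{(|x-y|^2+\varepsilon_\alpha)^{(n-\alpha)/2}} + \frac{(n-\alpha)\varepsilon_\alpha}{2}\int_{K_f}\!\!\int_{K_f}\!\frac{f(x)f(y)\,dx\,dy}{(|x-y|^2+\varepsilon_\alpha)^{(n-\alpha+2)/2}}.\]

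Finally, I pass to the limit $\varepsilon\to 0^+$. The first integrand is dominated by the $\mathcal{L}^1$ majorant $f(x)f(y)|x-y|^{\alpha-n}$ (integrability being Proposition \ref{finiteness of I_q}), so dominated convergence delivers the limit $\tfrac{n+\alpha}{2}\Ia(f)$. For the second term, $\varepsilon_\alpha = 0$ whenever $\alpha\ge n$; when $0<\alpha<n$, the elementary estimate $\varepsilon(|x-y|^2+\varepsilon)^{-(n-\alpha+2)/2}\le|x-y|^{\alpha-n}$ together with the pointwise limit $0$ allows another application of DCT to give $0$. The main obstacle is the clean term-by-term splitting of $\int_{K_f}\mathrm{div}_x F_y\,dx$ produced by Lemma \ref{Le.div.bound.} and the subsequent Fubini interchange: each of the two pieces must be absolutely integrable over $K_f\times K_f$ against $f(y)\,dy$, which for the $\langle x,\nabla\varphi(x)\rangle$-term is delicate and needs to be handled by combining the identity $\langle x,\nabla\varphi(x)\rangle = \varphi(x) + \varphi^*(\nabla\varphi(x))$ with the finiteness results of Section \ref{Section:-3}.
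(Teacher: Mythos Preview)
Your proposal is correct and follows essentially the same route as the paper: apply Lemma~\ref{Le.div.bound.} to the vector field $xf(x)(|x-y|^2+\varepsilon_\alpha)^{-(n-\alpha)/2}$, expand the divergence, symmetrize in $(x,y)$ to produce the factor $(n+\alpha)/2$, and pass to the limit by dominated convergence. The only cosmetic differences are that the paper keeps the left-hand side as a single combined expression (so that Fubini need only be justified for the right-hand side terms via \eqref{f1}--\eqref{control-term-22-bound-0.}, using the lower bound \eqref{positive-nablavarphi-1} on $\langle x,\nabla\varphi(x)\rangle$ rather than your identity $\langle x,\nabla\varphi(x)\rangle=\varphi(x)+\varphi^*(\nabla\varphi(x))$), and takes the limit of $|x-y|^2(|x-y|^2+\varepsilon_\alpha)^{-(n-\alpha+2)/2}$ directly instead of first splitting off your $\varepsilon_\alpha$-remainder.
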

\begin{proof}
Let  $0<\varepsilon<1$ be fixed. For any given $y\in K_f$, the monotone convergence theorem and \eqref{positive-nablavarphi-1} yield that
\begin{align*}
 \int_{K_f}
\frac{\langle x,\nabla\varphi(x)\rangle f(x) }{(|x-y|^2+\varepsilon_{\alpha})^{\frac{n-\alpha}{2}}}dx=\lim_{R\to+\infty}\int_{B_2^n(y,R)\cap K_f} \frac{\langle x,\nabla\varphi(x)\rangle f(x) }{(|x-y|^2+\varepsilon_{\alpha})^{\frac{n-\alpha}{2}}}dx.
\end{align*} From the integration by parts,  the fact that  $f(x)=0$ when $x\notin K_f$ and  Lemma \ref{Le.div.bound.}, one has
\begin{align}
 \int_{ K_f}
\frac{\langle x,\nabla\varphi(x)\rangle f(x) }{(|x-y|^2+\varepsilon_{\alpha})^{\frac{n-\alpha}{2}}}dx
&=- \lim_{R\to+\infty}\int_{B_2^n(y,R)\cap K_f}
\left\langle \frac{x}{{(|x\!-\!y|^2+\varepsilon_{\alpha})^{\frac{n-\alpha}{2}}}},\nabla f(x)\right\rangle dx \nonumber \\\nonumber
&=\lim_{R\to+\infty}\int_{B_2^n(y,R)\cap K_f} {\rm div} \left(\frac{x}{{(|x\!-\!y|^2+\varepsilon_{\alpha})^{\frac{n-\alpha}{2}}}}\!\right) f(x)dx \nonumber \\ 
&\quad \ \ -\lim_{R\to+\infty}\int_{B_2^n(y,R)\cap K_f} {\rm div} \left(\frac{xf(x)}{{(|x\!-\!y|^2+\varepsilon_{\alpha})^{\frac{n-\alpha}{2}}}}\!\right) dx \nonumber \\
&=\lim_{R\to+\infty}\int_{B_2^n(y,R)\cap K_f}{\rm div} \left(\frac{x}{{(|x\!-\!y|^2+\varepsilon_{\alpha})^{\frac{n-\alpha}{2}}}}\!\right) f(x)dx \nonumber\\
&\quad\quad   -\int_{\partial K_f}\frac{\langle x, \nu_{K_f}(x)\rangle f(x)}{(|x-y|^2+\varepsilon_{\alpha})^{\frac{n-\alpha}{2}}}d\mathcal{H}^{n-1}(x),\label{two-terms-bound.}
\end{align}
where we have used the following claim:  for fixed $y\in K_f$ and $\varepsilon\in(0,1)$,
\begin{align}\label{bound-div}
    \int_{K_f}\left|{\rm div} \left(\frac{x}{{(|x\!-\!y|^2+\varepsilon_{\alpha})^{\frac{n-\alpha}{2}}}}\!\right) f(x)\right|dx <\infty.
\end{align} Let us show why \eqref{bound-div} holds. Indeed, a direct calculation and the triangle inequality show that, for fixed $y\in K_f$,\begin{align*}
    &\int_{K_f}\left|{\rm div} \left(\frac{x}{{(|x\!-\!y|^2+\varepsilon_{\alpha})^{\frac{n-\alpha}{2}}}}\!\right) f(x)\right|dx\\
     &\quad =\int_{K_f}\left|\frac{n f(x)}{(|x-y|^2+\varepsilon_{\alpha})^{\frac{n-\alpha}{2}}}-(n-\alpha)\frac{\langle x,x-y\rangle f(x)}{(|x-y|^2+\varepsilon_{\alpha})^{\frac{n-\alpha}{2}+1}} \right|dx \\
    &\quad \leq n\int_{K_f}  \frac{f(x)}{{(|x\!-\!y|^2+\varepsilon_{\alpha})^{\frac{n-\alpha}{2}}}} dx+|n-\alpha|\int_{K_f}  \frac{|x|\cdot |x-y| f(x)}{{(|x\!-\!y|^2+\varepsilon_{\alpha})^{\frac{n-\alpha}{2}+1}}} dx\\
    &\quad \leq n\int_{K_f}  \frac{f(x)}{|x\!-\!y|^{n-\alpha}} dx+|n-\alpha|\int_{K_f}  \frac{|x|\cdot |x-y| f(x)}{{(|x\!-\!y|^2+\varepsilon_{\alpha})^{\frac{n-\alpha}{2}+1}}} dx.
\end{align*}
From  the definition of $\varepsilon_{\alpha}$ (see in \eqref{vare-alpha}), we infer that   if $0<\alpha <n$,
\begin{align}\label{f1}
\int_{K_f}\frac{|x|\cdot |x-y|f(x)}{(|x-y|^2+\varepsilon_{\alpha})^{\frac{n-\alpha}{2}+1}}dx    \le {\varepsilon^{\frac{\alpha-n-1}{2}}}
 \left(\int_{K_f}|x|
f(x)dx\right)<\infty,
\end{align}
 where we have used the fact that $\int_{\mathbb{R}^n}|x|^qf(x)dx<\infty$ for any $f\in{\rm LC}_n$ and $q>0$ (see \cite[Propostion 2.1]{HLXZ} and \cite[Lemma 3.1]{FYZZ}). If $\alpha\geq n$,  from \eqref{In.CF13-11},
\begin{align}\label{control-term-22-bound-0.}
 \int_{K_f}\frac{|x|\cdot |x-y|f(x)}{(|x-y|^2+\varepsilon_{\alpha})^{\frac{n-\alpha}{2}+1}}dx &=  \int_{K_f}\frac{|x|  f(x)}{|x-y|^{ n-\alpha+1}}dx\nonumber \\
 &\leq  \frac{1}{b}\int_{K_f
}\frac{ \varphi(x) f(x)}{|x-y|^{n-\alpha+1}}
dx -\frac{c}{b}\int_{K_f
}\frac{   f(x)}{|x-y|^{n-\alpha+1}}
dx.
\end{align}
Our claim \eqref{bound-div} follows  from   Proposition \ref{estimate-riesz potential}, \eqref{f1} and \eqref{control-term-22-bound-0.}. 

Due to \eqref{two-terms-bound.},  \eqref{bound-div} and the monotone convergence theorem, we infer that 
\begin{align}
 &\int_{ K_f}\left(\int_{ K_f}\frac{\langle x,\nabla\varphi(x)\rangle f(x) }{(|x-y|^2+\varepsilon_{\alpha})^{\frac{n-\alpha}{2}}}dx+\int_{\partial K_f}\frac{\langle x, \nu_{K_f}(x)\rangle f(x)}{(|x-y|^2+\varepsilon_{\alpha})^{\frac{n-\alpha}{2}}}d\mathcal{H}^{n-1}(x)\right)f(y)dy\nonumber \\
&\quad = n\int_{  K_f}\int_{  K_f}\frac{ f(x)f(y)}{(|x-y|^2+\varepsilon_{\alpha})^{\frac{n-\alpha}{2}}}dxdy-(n-\alpha)\int_{  K_f}\int_{  K_f}\frac{\langle x,x-y\rangle f(x)f(y)}{(|x-y|^2+\varepsilon_{\alpha})^{\frac{n-\alpha}{2}+1}} dxdy.\label{div-calculation-1-bound.}
\end{align}
By switching the roles of $x$ and $y$,
\begin{align}
 &\int_{ K_f}\left(\int_{ K_f}\frac{\langle y,\nabla\varphi(y)\rangle f(y) }{(|x-y|^2+\varepsilon_{\alpha})^{\frac{n-\alpha}{2}}}dy+\int_{\partial K_f}\frac{\langle y, \nu_{K_f}(y)\rangle f(y)}{(|x-y|^2+\varepsilon_{\alpha})^{\frac{n-\alpha}{2}}}d\mathcal{H}^{n-1}(y)\right)f(x)dx\nonumber \\
&\quad = n\int_{  K_f}\int_{  K_f}\frac{ f(x)f(y)}{(|x-y|^2+\varepsilon_{\alpha})^{\frac{n-\alpha}{2}}}dydx-(n-\alpha)\int_{  K_f}\int_{  K_f}\frac{\langle y,y-x\rangle f(x)f(y)}{(|x-y|^2+\varepsilon_{\alpha})^{\frac{n-\alpha}{2}+1}} dydx.\label{div-calculation-2-bound.}
\end{align}
Indeed, \eqref{f1} and \eqref{control-term-22-bound-0.} ensure that Fubini's Theorem can be used to obtain that, for any $\alpha >0$ and for any fixed $0<\varepsilon\le 1$,  
\begin{align}\label{add-4.29-bound.}
&\int_{K_f}\int_{K_f}\frac{\langle x,x-y\rangle f(x)f(y) }{(|x-y|^2+\varepsilon_{\alpha})^{\frac{n-\alpha}{2}+1}}dxdy+
\int_{K_f}\int_{K_f}\frac{\langle -y,x-y\rangle f(x)f(y)}{(|x-y|^2+\varepsilon_{\alpha})^{\frac{n-\alpha}{2}+1}}dydx \nonumber\\
&\quad \ \ =\int_{K_f}\int_{K_f}\frac{\langle x,x-y\rangle f(x)f(y)}{(|x-y|^2+\varepsilon_{\alpha})^{\frac{n-\alpha}{2}+1}}dxdy+
\int_{K_f}\int_{K_f}\frac{\langle -y,x-y\rangle f(x)f(y)}{(|x-y|^2+\varepsilon_{\alpha})^{\frac{n-\alpha}{2}+1}}dxdy \nonumber\\
&\quad\ \  =\int_{K_f}\int_{K_f}\frac{| x-y|^2f(x)f(y)}{(|x-y|^2+\varepsilon_{\alpha})^{\frac{n-\alpha}{2}+1}}dxdy<\infty.
\end{align}
Combining   \eqref{div-calculation-1-bound.}, \eqref{div-calculation-2-bound.}, \eqref{add-4.29-bound.} with  the dominated convergence theorem, we obtain
\begin{align*}\nonumber
&\lim_{\varepsilon\to0^+}\int_{K_f}\left(\int_{K_f}
\frac{\langle x,\nabla\varphi(x)\rangle f(x)f(y)}{(|x-y|^2+\varepsilon_{\alpha})^{\frac{n-\alpha}{2}}}dx+\int_{\partial K_f}\frac{\langle x, \nu_{K_f}(x)\rangle f(x)f(y)}{(|x-y|^2+\varepsilon_{\alpha})^{\frac{n-\alpha}{2}}}d\mathcal{H}^{n-1}(x)\right)dy\nonumber\\
&\quad=\lim_{\varepsilon\to0^+}\frac{1}{2}\left[\int_{K_f}\int_{K_f}
\frac{\langle x,\nabla\varphi(x)\rangle f(x)f(y)}{(|x-y|^2+\varepsilon_{\alpha})^{\frac{n-\alpha}{2}}}dxdy+\int_{K_f}\int_{\partial K_f}\frac{\langle x, \nu_{K_f}(x)\rangle f(x)f(y)}{(|x-y|^2+\varepsilon_{\alpha})^{\frac{n-\alpha}{2}}}d\mathcal{H}^{n-1}(x)dy\right.\\
&\quad\quad\quad    \quad +\left.\int_{K_f}\int_{K_f}
\frac{\langle y,\nabla\varphi(y)\rangle f(x)f(y)}{(|x-y|^2+\varepsilon_{\alpha})^{\frac{n-\alpha}{2}}}dydx+\int_{K_f}\int_{\partial K_f}\frac{\langle y, \nu_{K_f}(y)\rangle f(x)f(y)}{(|x-y|^2+\varepsilon_{\alpha})^{\frac{n-\alpha}{2}}}d\mathcal{H}^{n-1}(y)dx\right]\\
&\quad=\lim_{\varepsilon\to0^+}\int_{K_f}\int_{K_f}\left(\frac{n}{(|x-y|^2+\varepsilon_{\alpha})^{\frac{n-\alpha}{2}}}-\Big(\frac{n-\alpha}{2}\Big)\frac{|x-y|^2}{(|x-y|^2+\varepsilon_{\alpha})^{\frac{n-\alpha}{2}+1}}\right)
f(x)f(y)dxdy\nonumber\\
&\quad =n\lim_{\varepsilon\to0^+}\int_{K_f}\int_{K_f}\frac{f(x)f(y)}{(|x-y|^2+\varepsilon_{\alpha})^{\frac{n-\alpha}{2}}}dxdy-
\Big(\frac{n-\alpha}{2}\Big)
\lim_{\varepsilon\to0^+}\int_{K_f}\int_{K_f}\frac{|x-y|^2f(x)f(y)}{(|x-y|^2+\varepsilon_{\alpha})^{\frac{n-\alpha}{2}+1}}dxdy
\\\nonumber
&\quad=\left(\frac{n+\alpha}{2}\right) \Ia(f).
\end{align*}
This is the desired result.
\end{proof}

 Now, we can complete  the proof of Proposition \ref{Le.ff-2}.

\begin{proof}[Proof of Proposition \ref{Le.ff-2}] If  $\varphi(o)=0$, 
 by the fact that  $o\in{\rm int}(K_f)$, the monotone convergence theorem yields, for any $\alpha>0$,   that 
\begin{align}\label{eq.limt.4}
    \lim_{\varepsilon\to 0^+}\!\int_{K_f}\int_{\partial K_f}\!\!\frac{\langle x, \nu_{K_f}(x)\rangle f(x)f(y)}{(|x-y|^2+\varepsilon_{\alpha})^{\frac{n-\alpha}{2}}}d\mathcal{H}^{n-1}(x)dy &=\!\int_{K_f}\!\int_{\partial K_f}\!\!\!\frac{\langle x, \nu_{K_f}(x)\rangle f(x)f(y)}{|x-y|^{n-\alpha}}d\mathcal{H}^{n-1}(x)dy,\\ \label{eq1}
\lim_{\varepsilon\to0^+}\!\int_{K_f}\int_{K_f}\frac{\varphi^\ast(\nabla\varphi(x))f(x)f(y)}{(|x-y|^2+\varepsilon_{\alpha})^{\frac{n-\alpha}{2}}}dxdy
&=\!\int_{K_f}\int_{K_f}\frac{\varphi^\ast(\nabla\varphi(x))f(x)f(y)}{|x-y|^{n-\alpha}}dxdy . 
\end{align}
 Combining \eqref{eq.limt.4},  \eqref{eq1},    Lemma \ref{Le.epsilon.bound.} and Proposition \ref{prop 4.3}, we  infer that
\begin{align*} & \int_{K_f}\int_{K_f}\frac{\varphi^\ast(\nabla\varphi(x))f(x)f(y)}{|x-y|^{n-\alpha}}dxdy+\int_{K_f}\int_{\partial K_f}\frac{\langle x, \nu_{K_f}(x)\rangle f(x)f(y)}{|x-y|^{n-\alpha}}d\mathcal{H}^{n-1}(x)dy\\
&\quad =\lim_{\varepsilon\to 0^{+}}\left(\int_{K_f}\int_{K_f}\frac{\varphi^\ast(\nabla\varphi(x))f(x)f(y)}{(|x-y|^2+\varepsilon_{\alpha})^{\frac{n-\alpha}{2}}}dxdy+\int_{K_f}\int_{\partial K_f}\frac{\langle x, \nu_{K_f}(x)\rangle f(x)f(y)}{(|x-y|^2+\varepsilon_{\alpha})^{\frac{n-\alpha}{2}}}d\mathcal{H}^{n-1}(x)dy\right)\\
& \quad =\lim_{\varepsilon\to 0^{+}}\left(\int_{K_f}\int_{K_f}
\frac{\langle x,\nabla\varphi(x)\rangle f(x)f(y)}{(|x-y|^2+\varepsilon_{\alpha})^{\frac{n-\alpha}{2}}}dxdy+\int_{K_f}\int_{\partial K_f}\frac{\langle x, \nu_{K_f}(x)\rangle f(x)f(y)}{(|x-y|^2+\varepsilon_{\alpha})^{\frac{n-\alpha}{2}}}d\mathcal{H}^{n-1}(x)dy\right)\\
&\quad\quad-\lim_{\varepsilon\to 0^{+}}\int_{K_f}\int_{K_f}   \frac{\varphi(x)f(x)f(y)}{(|x-y|^2+\varepsilon_{\alpha})^{\frac{n-\alpha}{2}}}dxdy\\
&\quad =\bigg(\frac{n+\alpha}{2}\bigg)   \Ia(f)-\int_{K_f}\int_{K_f}   \frac{\varphi(x)f(x)f(y)}{ |x-y|^{n-\alpha}}dxdy =\delta \Ia(f,f),
\end{align*}
where we have used the fact that 
 \begin{align*} 
 \lim_{\varepsilon\to 0^{+}}\int_{K_f}\int_{K_f}   \frac{\varphi(x)f(x)f(y)}{(|x-y|^2+\varepsilon_{\alpha})^{\frac{n-\alpha}{2}}}dxdy=\int_{K_f}\int_{K_f}   \frac{\varphi(x)f(x)f(y)}{ |x-y|^{n-\alpha}}dxdy<+\infty.
 \end{align*}
This is a directly result from the  monotone convergence theorem, \eqref{In.CF13-11} and Proposition \ref{finite-22-33}.
 
  If   $\varphi(o)\neq 0$, we set $\widetilde{\varphi}=\varphi-\varphi(o)$  and $\widetilde{f}=e^{-\widetilde{\varphi}}$.  Then, \eqref{delta.ff} holds for $\widetilde{f}$.  A basic calculation and \eqref{add-1} ensure \eqref{delta.ff} still  hold for $f=e^{-\varphi}$.
 \end{proof}

\section{The variational formulas for the Riesz \texorpdfstring{$\alpha$}{}-energy  and the integral expression of \texorpdfstring{$\delta\Ia(f,g)$}. }\label{Section:-5}
In this section, we will calculate $\delta\Ia(f,g)$ for   $f=e^{-\varphi}\in \LC$ and  $g=e^{-\psi} \in \LC$ under condition  \eqref{nat-condition-1}. 
We first show  that Corollary \ref{var.bound.}  follows from    Theorem \ref{var.bound.-1} under the  condition ${\rm int}(K_f)\neq \emptyset$.

\begin{proof}[Proof of Corollary \ref{var.bound.}]
Let $f=e^{-\varphi}\in\LC$ and ${\rm int}(K_f)\neq \emptyset$. Assume that $x_0\in {\rm int}(K_f)$, and let $\bar{f}(x)=f(x+x_0)$ for $x\in\R^n$.  Clearly,  $K_{\bar{f}}=K_{f}-x_0$ and then $o\in {\rm int}(K_{\bar{f}})$.  Let  $\overline{\varphi}=-\log \bar{f}$, and from \eqref{def-dual}, $\overline{\varphi}^*(y) =\varphi^*(y)-\langle x_0,y\rangle$ for any $y\in\R^n$. Then condition \eqref{nat-condition-1} for $\varphi$ and $\psi$ ensure that $\overline{\varphi}$ and $\psi$ satisfy \eqref{nat-condition-1} (but with different constants) as well. To this end, from \eqref{nat-condition-1} for $\varphi$ and $\psi$, one has,  for all $y\in \Rn$, \begin{align}\label{formula-add-1}
\psi^*(y)\leq \beta_1\varphi^*(y)+\beta_2=\beta_1 (\overline{\varphi}^*(y)+\langle x_0, y\rangle)+\beta_2 \leq \beta_1 (\overline{\varphi}^*(y)+ |x_0|\cdot |y|)+\beta_2 
\end{align} for some constants $\beta_1>0$ and $\beta\in \R.$ Note that $o\in {\rm int}(K_{\bar{f}})$, then there exist constants $\overline{b}>0$ and $\overline{c}\in \R$ such that $\overline{\varphi}^*(y)\geq \overline{b}|y|+\overline{c}. $ Together with \eqref{formula-add-1}, one gets, for all $y\in \Rn,$
 \begin{align*}
\psi^*(y)\leq   \beta_1 (\overline{\varphi}^*(y)+ |x_0|\cdot |y|)+\beta_2  \leq  \frac{\beta_1(\overline{b}+|x_0|)}{\overline{b}} \overline{\varphi}^*(y)+\frac{\beta_2\overline{b}-\overline{c}|x_0|\beta_1}{\overline{b}}. 
\end{align*} That is, $\overline{\varphi}$ and $\psi$ satisfy \eqref{nat-condition-1} (with different constants). It then follows from Theorem \ref{var.bound.-1} that 
\begin{align}
\delta\Ia(\bar{f},g)=\!\int_{K_{\bar{f}}} \!\int_{K_{\bar{f}}}\!\!\frac{\psi^*(\nabla\overline{\varphi}(x))  \bar{f}(y)\bar{f}(x)}{|x-y|^{n-\alpha}}dxdy
+\int_{\partial K_{\bar{f}}}\!\int_{K_{\bar{f}}}\!\!\frac{ h_{K_g}(\nu_{K_{\bar{f}}}(x)) \bar{f}(y)\bar{f}(x)}{|x-y|^{n-\alpha}}dyd\mathcal{H}^{n-1}(x), \label{new-5.2}
\end{align}
where $g=e^{-\psi}\in \LC$. From \eqref{def-asp-sum-1} and \eqref{def-dual}, one has $(\bar{f}\oplus t\bullet g) (x)=(f\oplus t\bullet g) (x+x_0)$ for any $x\in\R^n$. Then,  \eqref{first variation} and \eqref{translation invariance}  deduce that 
\begin{align}\label{cor1}
\delta\Ia(\bar{f},g)=\frac{1}{2} \lim_{t\to0^{+}}\frac{\Ia(\bar{f}\oplus t\bullet g)-\Ia(\bar{f})}{t}=\frac{1}{2}\lim_{t\to0^{+}}\frac{\Ia(f\oplus t\bullet g)-\Ia(f)}{t}=\delta\Ia(f,g).
\end{align}
Using  the fact that $K_{\bar{f}}=K_{f}-x_0$, one has 
\begin{align}\label{cor2}
\int_{K_{\bar{f}}}\int_{K_{\bar{f}}}\frac{\psi^*(\nabla\bar{\varphi}(x))  \bar{f}(y)\bar{f}(x)}{|x-y|^{n-\alpha}}dxdy&=\int_{ K_{f}-x_0}\int_{ K_{f}-x_0}\frac{\psi^*(\nabla\varphi(x+x_0))   f(y+x_0)f(x+x_0)}{|x-y|^{n-\alpha}}dxdy\nonumber \\
&=\int_{ K_{f}}\int_{ K_{f}}\frac{\psi^*(\nabla\varphi(x))   f(y)f(x)}{|x-y|^{n-\alpha}}dxdy.
\end{align}
Similarly, from the fact that $\nu_{K_{\bar{f}}}(x)=\nu_{K_{f}}(x+x_0)$,   one has
\begin{align}\label{cor3}
\int_{\partial K_{\bar{f}}}\int_{K_{\bar{f}}}\frac{ h_{K_g}(\nu_{K_{\bar{f}}}(x)) \bar{f}(y)\bar{f}(x) }{|x-y|^{n-\alpha}}dyd\mathcal{H}^{n-1}(x) =\int_{\partial K_{f}}\int_{K_{f}}\frac{ h_{K_g}(\nu_{K_{f}}(x)) f(y)f(x) }{|x-y|^{n-\alpha}}dyd\mathcal{H}^{n-1}(x).
\end{align}
The desired result follows from \eqref{new-5.2}, \eqref{cor1}, \eqref{cor2}  and \eqref{cor3}.
\end{proof}

Before we  prove Theorem \ref{var.bound.-1}, we need some lemmas involving the function 
\begin{align}\label{hat-f}
\widehat{f}_t(x)=f\oplus (t\bullet e^{-(\beta_1 \varphi^*+\beta_2)^*})(x)=e^{\beta_2 t-(1+\beta_1t)\varphi(\frac{x}{1+\beta_1t})}
\end{align}
for some $\beta_1>0$ and $\beta_2\in\R$.   
For any $r(t)\in[1,1+\beta_1t)$, from the convexity of $\varphi$, we get $$(1+\beta_1 t)\varphi\left(\frac{r(t)x}{1+\beta_1t}\right)\leq r(t)\varphi(x)+(1+\beta_1t-r(t))\varphi(o).$$ Assume that $\lim_{t\to0^{+}}r(t)=1^+$. 
Since $o\in  \mathrm{int}(K_f)$ (which implies $\varphi(o)<\infty$), one gets, 
$$\limsup_{t\to0^{+}}(1+\beta_1 t)\varphi\left(\frac{r(t)x}{1+\beta_1t}\right)\leq  \varphi(x) \ \ \  \mathrm{for\ any} \ x\in K_f.$$ 
One the other hand,  the lower semi-continuity  of $\varphi$ implies that  $$\liminf_{t\to0^{+}} (1+\beta_1 t)\varphi\left(\frac{r(t)x}{1+\beta_1t}\right) \geq \varphi(x)  \ \ \  \mathrm{for\ any} \ x\in K_f.$$
Therefore, for any $r(t)\in[1,1+\beta_1t)$ such that $\lim_{t\to0^{+}}r(t)=1^+,$ and for any $x\in K_f$, one has 
\begin{align}\label{hat-f-to0}
\lim_{t\to0^{+}} \widehat{f}_t(r(t)x)=\lim_{t\to0^{+}} e^{\beta_2 t-(1+\beta_1t)\varphi(\frac{r(t)x}{1+\beta_1t})}=  f(x).
\end{align}
As $o\in{\rm int}(K_f)$,  one can define $\rho_{K_f}$, the radial function of $K_f$ (not necessary bounded), by $\rho_{K_f}(u)=\sup\{t>0:tu\in K_f\}$. Note that $\rho_{K_f}$ may be infinite and let  $$\Omega_f=\{u\in S^{n-1}:\rho_{K_f}(u)<\infty\}.$$
Write $\Omega_f^c=\sphere\setminus\Omega_f$.

\begin{lemma}\label{Le.J1}
Let $f=e^{-\varphi}\in \LC$ and   $\widehat{f}_t $   be given in \eqref{hat-f}.  If $o\in{\rm int}(K_f)$, then for any $\alpha>0$
\begin{align}\label{J1}
\lim_{t\to 0^+}\int_{K_{\widehat{f}_t}\setminus K_f}\int_{K_{\widehat{f}_t}\setminus K_f}\frac{\widehat{f}_t(x)\widehat{f}_t(y)}{t|x-y|^{n-\alpha}}dxdy=0.
\end{align}
\end{lemma}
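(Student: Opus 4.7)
The first observation is that by \eqref{hat-f}, $\widehat{f}_t$ is supported on $K_{\widehat{f}_t}=(1+\beta_1 t)K_f$, so the integration set $S_t:=K_{\widehat{f}_t}\setminus K_f = (1+\beta_1 t)K_f\setminus K_f$ is a thin \emph{shell} around $\partial K_f$ whose radial thickness collapses to $0$ as $t\to 0^+$. The plan has three ingredients: (i) a uniform pointwise bound $\widehat{f}_t(x)\le C\,e^{-b|x|}$ on $\mathbb{R}^n$ valid for all $t\in(0,1)$; (ii) a shell-volume estimate $\int_{S_t} e^{-a|x|}\,dx = O(t)$ for every fixed $a>0$; and (iii) a splitting of the double integral according to the size of $|x-y|$ to absorb the singular kernel $|x-y|^{\alpha-n}$.

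For ingredient (i), apply \eqref{In.CF13-11} (i.e.\ $\varphi(z)\ge b|z|+c$) at $z=x/(1+\beta_1 t)$: this gives $(1+\beta_1 t)\varphi(x/(1+\beta_1 t))\ge b|x|+c(1+\beta_1 t)$, whence $\widehat{f}_t(x)\le e^{(\beta_2-c\beta_1)t-c}e^{-b|x|}\le C\,e^{-b|x|}$ for all $t\in(0,1)$. For (ii), since $o\in\mathrm{int}(K_f)$ the radial function $\rho:=\rho_{K_f}$ is strictly positive on $\sphere$, and $su\in S_t$ iff $u\in\Omega_f$ and $\rho(u)<s\le(1+\beta_1 t)\rho(u)$. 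In polar coordinates, bounding $e^{-as}\le e^{-a\rho(u)}$ and $s^{n-1}\le(1+\beta_1)^{n-1}\rho(u)^{n-1}$ on this interval yields
\begin{align*}
\int_{S_t} e^{-a|x|}\,dx
\le (1+\beta_1)^{n-1}\beta_1 t\int_{\Omega_f}\rho(u)^n e^{-a\rho(u)}\,du
\le C_a\,t,
\end{align*}
since $r\mapsto r^n e^{-ar}$ is bounded on $(0,\infty)$ and $\Omega_f\subset\sphere$ has finite measure.

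For step (iii), distinguish two cases. If $\alpha\ge n$, then $|x-y|^{\alpha-n}\le(|x|+|y|)^{\alpha-n}$, and $(|x|+|y|)^{\alpha-n}e^{-b(|x|+|y|)/2}$ is uniformly bounded by some $C_1>0$; combined with (i), the integrand is dominated by $C^2C_1 e^{-b|x|/2}e^{-b|y|/2}\mathbf{1}_{S_t}(x)\mathbf{1}_{S_t}(y)$, so (ii) with $a=b/2$ gives
\begin{align*}
\frac{1}{t}\int_{S_t}\!\int_{S_t}\frac{\widehat{f}_t(x)\widehat{f}_t(y)}{|x-y|^{n-\alpha}}\,dx\,dy \le \frac{C^2 C_1}{t}\Big(\int_{S_t}e^{-b|x|/2}\,dx\Big)^{\!2}=O(t)\to 0.
\end{align*}
If $0<\alpha<n$, fix $\epsilon_0\in(0,1)$ and split the double integral according to $|x-y|\ge\epsilon_0$ or $|x-y|<\epsilon_0$. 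On the far piece $|x-y|^{\alpha-n}\le\epsilon_0^{\alpha-n}$, so (i)--(ii) bound its contribution by $O(\epsilon_0^{\alpha-n}t^2)$. On the near piece, for fixed $y\in S_t$ and $|x-y|<\epsilon_0<1$ one has $e^{-b|x|}\le e^{b}e^{-b|y|}$; polar coordinates around $y$ give
\begin{align*}
\int_{\{|x-y|<\epsilon_0\}}e^{-b|x|}|x-y|^{\alpha-n}\,dx\le \frac{e^{b} n\omega_n}{\alpha}\,e^{-b|y|}\epsilon_0^\alpha,
\end{align*}
and integrating against $\widehat{f}_t(y)\le Ce^{-b|y|}$ with (ii) yields $O(\epsilon_0^\alpha t)$. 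After division by $t$ the total becomes $O(\epsilon_0^{\alpha-n}t)+O(\epsilon_0^\alpha)$, and choosing $\epsilon_0=t^{1/(2(n-\alpha))}$ forces both terms to vanish as $t\to 0^+$.

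The main obstacle is the diagonal singularity $|x-y|^{\alpha-n}$ when $0<\alpha<n$: naive $L^\infty$ bounds on the kernel destroy the essential $t$-factor. Balancing the $O(t)$ shell thickness against the local integrability of the kernel---via the $t$-dependent cutoff $\epsilon_0=t^\gamma$ above, or equivalently via a one-line Hardy--Littlewood--Sobolev estimate applied to $e^{-b|\cdot|}\mathbf{1}_{S_t}$ with exponent $2n/(n+\alpha)$---is the crucial quantitative step.
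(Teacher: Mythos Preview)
Your proof is correct and follows essentially the same route as the paper: the uniform bound $\widehat f_t\le Ce^{-b|\cdot|}$, the polar-coordinate shell estimate $\int_{S_t}e^{-a|x|}\,dx=O(t)$, and the $\varepsilon$-splitting of the kernel for $0<\alpha<n$ all mirror the paper's argument. The only cosmetic differences are that the paper uses $|x-y|^{\alpha-n}\le 2^{\alpha-n}(|x|^{\alpha-n}+|y|^{\alpha-n})$ in the $\alpha\ge n$ case (rather than absorbing $(|x|+|y|)^{\alpha-n}$ into the exponential), and keeps the cutoff $\varepsilon$ fixed and lets it $\to 0$ at the end, whereas you take $\epsilon_0=t^{1/(2(n-\alpha))}$; both variants are standard and equally valid.
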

\begin{proof}
 From \cite[Proposition 2.1 (iii)] {CF13},  it holds    $K_{\widehat{f}_t}= (1+t\beta_1)K_f$.  When $K_f=\Rn$, \eqref{J1} is trivial.  If  $\alpha\geq n$, from the triangle inequality, one has 
     $|x-y|^{\alpha-n}\leq 2^{\alpha-n}(|x|^{\alpha-n}+|y|^{\alpha-n})$
 and 
\begin{align}\label{J1-0}
 \int_{K_{\widehat{f}_t}\setminus K_f}\int_{K_{\widehat{f}_t}\setminus K_f}\frac{\widehat{f}_t(x)\widehat{f}_t(y)}{|x-y|^{n-\alpha}}dxdy 
&  \leq \int_{K_{\widehat{f}_t}\setminus K_f}\int_{K_{\widehat{f}_t}\setminus K_f}\widehat{f}_t(x)\widehat{f}_t(y)2^{\alpha-n}(|x|^{\alpha-n}+|y|^{\alpha-n})dxdy\nonumber\\
& =2^{\alpha-n+1}\left(\int_{K_{\widehat{f}_t}\setminus K_f} \widehat{f}_t(y)dy\right)\cdot \left(\int_{K_{\widehat{f}_t}\setminus K_f} \widehat{f}_t(x)|x|^{\alpha-n}dx\right).
\end{align}
From the fact that $o\in {\rm int}(K_f)$ and \eqref{In.CF13-11}, we know that there exist $c\in\R$ and $b>0$ such that
\begin{align} \label{J1-1}
\int_{K_{\widehat{f}_t}\setminus K_f} \widehat{f}_t(x)|x|^{\alpha-n}dx&\leq e^{t\beta_2-(1+t\beta_1)c}\int_{K_{\widehat{f}_t}\setminus K_f}  e^{-b|x|}|x|^{\alpha-n}dx \nonumber\\
&= e^{t\beta_2-(1+t\beta_1)c}\int_{ \Omega_f}\int_{\rho_{K_f}(u)}^{(1+t\beta_1)\rho_{K_f}(u)}e^{-br}r^{\alpha-1}drdu\nonumber\\
&\leq e^{t\beta_2-(1+t\beta_1)c}\int_{\Omega_f} e^{-b\rho_{K_f}(u)} \int_{\rho_{K_f}(u)}^{(1+t\beta_1)\rho_{K_f}(u)} r^{\alpha-1}drdu\nonumber\\
&=\alpha^{-1}e^{t\beta_2-(1+t\beta_1)c}\left((1+t\beta_1)^{\alpha}-1\right)\int_{\Omega_f}e^{-b\rho_{K_f}(u)}\rho_{K_f}(u)^{\alpha}du.
\end{align}
A same argument helps us to obtain that
\begin{align}\label{J1-2}
\int_{K_{\widehat{f}_t}\setminus K_f} \widehat{f}_t(y)dy&\leq n^{-1}e^{t\beta_2-(1+t\beta_1)c}\left((1+t\beta_1)^{n}-1\right)\int_{\Omega_f}e^{-b\rho_{K_f}(u)}\rho_{K_f}(u)^{n}du.
\end{align}
Therefore, the desired result  follows from \eqref{J1-0}, \eqref{J1-1}, \eqref{J1-2} and  the  fact that: for any $\alpha>0$, \begin{align} \label{M}
M_\alpha:=\int_{\Omega_f}e^{-b\rho_{K_f}(u)}\rho_{K_f}(u)^{\alpha}du 
&=\alpha \int_{\Omega_f}e^{-b\rho_{K_f}(u)} \int_{0}^{\rho_{K_f}(u)} r^{\alpha-1}drdu \nonumber\\
&\leq \alpha \int_{\Omega_f} \int_{0}^{\rho_{K_f}(u)} e^{-br}r^{\alpha-1}drdu \nonumber\\
&\leq \alpha\int_{S^{n-1}} \int_{0}^{\infty } e^{-br}r^{\alpha-1}drdu  \nonumber \\
&= \alpha n\omega_n b^{-\alpha}\Gamma (\alpha)<\infty.
\end{align}

Next,   consider the case of $0<\alpha <n$.  For  any fixed $\varepsilon>0$,   
it holds
\begin{align}\label{leqn1}
0&\leq \lim_{t\to 0^+}\int_{K_{\widehat{f}_t}\setminus K_f}\int_{K_{\widehat{f}_t}\setminus K_f}\frac{\widehat{f}_t(x)\widehat{f}_t(y)}{t|x-y|^{ n-\alpha}}dxdy\nonumber \\
&= \lim_{t\to 0^+}\int_{K_{\widehat{f}_t}\setminus K_f}\int_{B_2^n(y,\varepsilon)\cap(K_{\widehat{f}_t}\setminus K_f)}\frac{\widehat{f}_t(x)\widehat{f}_t(y)}{t|x-y|^{ n-\alpha}}dxdy \nonumber \\
    &\quad +\lim_{t\to 0^+}\int_{K_{\widehat{f}_t}\setminus K_f}\int_{B_2^n(y,\varepsilon)^c\cap(K_{\widehat{f}_t}\setminus K_f)}\frac{\widehat{f}_t(x)\widehat{f}_t(y)}{t|x-y|^{ n-\alpha}}dxdy.
        \end{align}
  By \eqref{upperboundoff0}, \eqref{J1-2} and \eqref{M}, we have
     \begin{align}\label{leqn2}
 \lim_{t\to 0^+}\int_{K_{\widehat{f}_t}\setminus K_f}\int_{B_2^n(y,\varepsilon)^c\cap(K_{\widehat{f}_t}\setminus K_f)}\frac{\widehat{f}_t(x)\widehat{f}_t(y)}{t|x-y|^{ n-\alpha}}dxdy
 &  \leq \lim_{t\to 0^+}\int_{K_{\widehat{f}_t}\setminus K_f}\int_{B_2^n(y,\varepsilon)^c\cap(K_{\widehat{f}_t}\setminus K_f)} \frac{\widehat{f}_t(x)\widehat{f}_t(y)}{t  \varepsilon^{ n-\alpha}}dxdy \nonumber\\
&  \leq  \varepsilon^{ \alpha-n}\lim_{t\to 0^+}\int_{K_{\widehat{f}_t}\setminus K_f}\int_{ K_{\widehat{f}_t}\setminus K_f} \frac{\widehat{f}_t(x)\widehat{f}_t(y)}{t}dxdy \nonumber\\
& \leq \varepsilon^{ \alpha-n}M_n^2\lim_{t\to 0^{+}}\frac{n^{-2}e^{2t\beta_2-2(1+t\beta_1)c}\left((1+t\beta_1)^{n}-1\right)^2}{t} \nonumber\\
& =0.
\end{align}      
From  \eqref{In.CF13-11}, \eqref{J1-2} and \eqref{M}, one has
   \begin{align}\nonumber
 &\lim_{t\to 0^+}\int_{K_{\widehat{f}_t}\setminus K_f}\int_{B_2^n(y,\varepsilon)\cap(K_{\widehat{f}_t}\setminus K_f)}\frac{\widehat{f}_t(x)\widehat{f}_t(y)}{t|x-y|^{ n-\alpha}}dxdy   \\\nonumber
 &\quad \leq\lim_{t\to 0^+}\frac{e^{t\beta_2-(1+t\beta_1)c}}{t}  \int_{K_{\widehat{f}_t}\setminus K_f} \widehat{f}_t(y) \left(\int_{B_2^n(y,\varepsilon)\cap(K_{\widehat{f}_t}\setminus K_f)} |x-y|^{ \alpha-n}dx\right)dy \\\nonumber
  &\quad \leq \int_{ B_2^n(o,\varepsilon)} |z|^{ \alpha-n}dz  \cdot  \lim_{t\to 0^+} \left(\frac{e^{t\beta_2-(1+t\beta_1)c} }{t}  \int_{K_{\widehat{f}_t}\setminus K_f} \widehat{f}_t(y) dy\right) \\ 
  &\quad \leq  \alpha \omega_n e^{-2c} M_n\varepsilon^{\alpha} \lim_{t\to 0^+}\frac{e^{2t\beta_2 +2t\beta_1c} \left((1+t\beta_1)^n-1\right) }{t}\nonumber\\
  &\quad = (n\beta_1\alpha \omega_n e^{-2c} M_n)\varepsilon^{\alpha}.\label{leqn3}
        \end{align}
We infer from \eqref{leqn1}, \eqref{leqn2} and \eqref{leqn3} that, for any $\varepsilon>0$, \begin{align*}
0&\leq \lim_{t\to 0^+}\int_{K_{\widehat{f}_t}\setminus K_f}\int_{K_{\widehat{f}_t}\setminus K_f}\frac{\widehat{f}_t(x)\widehat{f}_t(y)}{t|x-y|^{ n-\alpha}}dxdy \leq (n\beta_1\alpha \omega_n e^{-2c} M_n)\varepsilon^{\alpha}.
        \end{align*}
Hence, the arbitrariness of $\varepsilon$ yields  the desired result for $0<\alpha<n$.
\end{proof}

According to \eqref{In.CF13-11}, if $f=e^{-\varphi}\in{\rm LC}_n$, then $\varphi^*(o)= -\inf \varphi \leq -c<\infty.$ If $\psi$ satisfies condition  \eqref{nat-condition-1}, then $\psi^*(o)< \infty.$ Applying  Lemma \ref{rotem-111} to  $f=e^{-\varphi}\in \LC$ and $g=e^{-\psi}\in\LC$, one gets 
\begin{align}\label{rotem-formula1}
    \frac{d}{dt}(\varphi^\ast+t\psi^\ast)^\ast(x)\bigg|_{t=0^+}
    =-\psi^\ast(\nabla\varphi(x)),
\end{align}
for any $x\in\mathbb{R}^n$ where $\varphi $
is differentiable.

\begin{lemma}\label{Le.Iff-bound.}
  Let $f=e^{-\varphi}\in \LC$ with $o\in{\rm int}(K_f)$ and $\widehat{f}_t $    be given in \eqref{hat-f}.   For any $\alpha>0$, $u\in\sphere, y\in K_f$, and $t\in(0,1)$,  define
\begin{align}\label{E_t}
E_{t}(u,y)=\!\int_{\rho_{K_f}(u)}^{\rho_{K_{\widehat{f}_t}}(u)}\! \frac{\widehat{f}_t(ru)\widehat{f}_t(y)}{t|r u-y|^{ n-\alpha}}r^{n-1}dr+\!\int_0^{\rho_{K_f}(u)}\frac{(\widehat{f}_t(r u)+f(r u))(\widehat{f}_t(y)\!-\!  f(y))}{2t|r u-y| ^{n-\alpha }} r^{n-1}dr,
\end{align}
where, if $u\in\Omega_f^c$, the first term in the right hand side of \eqref{E_t} is treated as $0$. 
 Then, 
 \begin{align*}
& \lim_{t\to 0^+} \int_{K_{f}}\int_{ \sphere} E_{t }(u,y)  dudy
=\int_{K_{f}}\int_{ \sphere} \lim_{t\to0^{+}} E_{t }(u,y) dudy.
\end{align*}
\end{lemma}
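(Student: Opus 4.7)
My plan is to apply Lebesgue's dominated convergence theorem to exchange $\lim_{t \to 0^+}$ with the double integration over $S^{n-1} \times K_f$. Decompose $E_t(u,y) = E_t^{(1)}(u,y) + E_t^{(2)}(u,y)$ according to the two summands in \eqref{E_t}. The identity $K_{\widehat{f}_t} = (1+\beta_1 t)K_f$ shows that $E_t^{(1)}$ is supported on $\{u \in \Omega_f\}$ and that its inner integration interval $[\rho_{K_f}(u), (1+\beta_1 t)\rho_{K_f}(u)]$ has length $\beta_1 t \rho_{K_f}(u)$, which exactly absorbs the $1/t$ prefactor. For pointwise convergence, the change of variables $r = (1+\beta_1 t)s$ together with \eqref{hat-f-to0} and the mean value theorem handle $E_t^{(1)}$; in $E_t^{(2)}$, factor the integrand as $\tfrac{1}{2}(\widehat{f}_t(ru) + f(ru)) \cdot \frac{\widehat{f}_t(y) - f(y)}{t} \cdot |ru - y|^{\alpha - n} r^{n-1}$, where \eqref{hat-f-to0} yields $\widehat{f}_t(ru) \to f(ru)$ and \eqref{rotem-formula1} combined with \eqref{basicequ} identifies $\lim_{t \to 0^+}(\widehat{f}_t(y) - f(y))/t = f(y)(\beta_2 + \beta_1 \varphi^*(\nabla\varphi(y)))$ at every differentiability point of $\varphi$, which is a full-measure set.

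The bulk of the work is producing an integrable majorant. Two applications of the convexity of $\varphi$---the chord inequality based at $o$ (using $o \in \mathrm{int}(K_f)$ so that $\varphi(o) < \infty$), which gives $\widehat{f}_t(z) \leq e^{t(\beta_2 - \beta_1 \varphi(o))} f(z)$, and the subgradient inequality at $y$, which gives $\widehat{f}_t(y) \leq f(y) e^{t(\beta_2 + \beta_1 \varphi^*(\nabla\varphi(y)))}$---combined with the elementary estimate $|(e^{tA} - 1)/t| \leq |A| e^{t_0 |A|}$ for $t \in (0, t_0)$, allow one to dominate $|(\widehat{f}_t(y) - f(y))/t|$ by $C f(y)(1 + |\varphi^*(\nabla\varphi(y))|)$ and $\widehat{f}_t(ru)$ by $C' f(ru)$. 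Returning from polar to Cartesian coordinates, the required integrability of these majorants against the kernel $|x - y|^{\alpha - n}$ over $K_f \times K_f$ follows from Propositions \ref{estimate-riesz potential}, \ref{bounded}, and \ref{finite-22-33}, after rewriting $\varphi^*(\nabla\varphi(y)) = \langle y, \nabla\varphi(y)\rangle - \varphi(y)$ via \eqref{basicequ}. For $E_t^{(1)}$, the substitution $r = (1+\beta_1 t)s$ together with the bound $f(su)^{1+\beta_1 t} \leq e^{-c \beta_1 t} f(su)$ arising from \eqref{In.CF13-11} reduces the domination to the polar-coordinate shell estimates already developed in the proof of Lemma \ref{Le.J1}.

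The principal obstacle is that $\varphi^*(\nabla\varphi(y))$ is generally unbounded in $y$, so one must carefully select which of the two convexity-based upper bounds to use for which factor: the $y$-variable tolerates a $\varphi^*(\nabla\varphi(y))$-dependent weight (controlled by Proposition \ref{bounded}), but the $ru$-variable must be bounded by a pure multiple of $f(ru)$, forcing use of the chord-based (not subgradient-based) estimate there. A secondary obstacle is the singular kernel case $0 < \alpha < n$, handled exactly as in the proof of Lemma \ref{Le.J1} by splitting the domain into $\{|x - y| \geq \varepsilon\}$ and $\{|x - y| < \varepsilon\}$ and invoking the local integrability of $|z|^{\alpha - n}$. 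Once the majorant is in place, Lebesgue's theorem immediately delivers the claim.
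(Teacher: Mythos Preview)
Your approach via dominated convergence differs from the paper's and contains a real gap. First, a small slip: the chord inequality based at $o$ gives the \emph{lower} bound $\widehat{f}_t(z)\ge e^{t(\beta_2-\beta_1\varphi(o))}f(z)$, not the upper bound you state. More seriously, your own estimate $|(e^{tA}-1)/t|\le |A|\,e^{t_0|A|}$ with $A=\beta_2+\beta_1\varphi^*(\nabla\varphi(y))$ only yields the pointwise bound $f(y)\,|A|\,e^{t_0|A|}$, and the factor $e^{t_0|A|}$ does \emph{not} reduce to a constant; the jump from there to the majorant $Cf(y)(1+|\varphi^*(\nabla\varphi(y))|)$ is unjustified. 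For a concrete failure, take $n=1$ and $\varphi(y)=e^{|y|}$ (so $f\in\LC$ and $K_f=\mathbb R$). For any fixed $t\in(0,t_0)$ one has $(1+\beta_1t)e^{y/(1+\beta_1t)}\ll e^{y}$ as $y\to\infty$, hence $(\widehat f_t(y)-f(y))/t\sim \widehat f_t(y)/t$, while $f(y)(1+|A|)\sim \beta_1\,y\,e^{y}\,e^{-e^{y}}$; the ratio behaves like $e^{e^{y}}/(y\,e^{y})\to\infty$, so no constant $C$ makes your majorant work.

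The paper sidesteps this by an indirect argument rather than a majorant. Corollary~\ref{variational-formula-beta} gives $\delta\Ia(f,\mathfrak g)=\beta_1\,\delta\Ia(f,f)+\beta_2\,\Ia(f)$ with $\mathfrak g=e^{-(\beta_1\varphi^*+\beta_2)^*}$. Expanding $\delta\Ia(f,\mathfrak g)$ from the definition, invoking Lemma~\ref{Le.J1}, and passing to polar coordinates, one obtains $\delta\Ia(f,\mathfrak g)=\lim_{t\to0^+}\int_{K_f}\int_{\sphere}E_t\,du\,dy$. Separately, the pointwise limit of $E_t$ is computed and, after integration, is recognised via Proposition~\ref{Le.ff-2} as $\beta_1\,\delta\Ia(f,f)+\beta_2\,\Ia(f)$ again. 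The interchange of limit and integral thus follows by identifying both sides with the same finite number, with no dominating function required. This is not merely stylistic: Lemma~\ref{Le.Iff-bound.} is invoked later, in the proof of Theorem~\ref{var.bound.-1}, precisely to supply the hypothesis $\lim\int E_t=\int\lim E_t$ in the \emph{generalised} dominated convergence theorem applied to $0\le F_t\le E_t$---because no pointwise integrable majorant for $E_t$ (or $F_t$) is available.
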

\begin{proof}
Let  $\mathfrak{g}=e^{-(\beta_1 \varphi^*+\beta_2)^*}$.    From Corollary  \ref{variational-formula-beta} and  formula   \eqref{first variation},  one has
\begin{align}\label{int-out0}
&\beta_1 \delta \Ia(f,f)+\beta_2 \Ia(f)=  \delta \Ia(f,  \mathfrak{g}) \nonumber\\
&\quad=\frac{1}{2}\lim_{t\to0^{+}} \frac{1}{t}\left(\int_{K_{\widehat{f}_t}}\int_{K_{\widehat{f}_t}}\frac{\widehat{f}_t(x)\widehat{f}_t(y)}{|x-y|^{n-\alpha}}dxdy-\int_{K_{f}}\int_{K_{f}}\frac{f(x)f(y)}{|x-y|^{n-\alpha}}dxdy \right)\nonumber\\
&\quad= \frac{1}{2}\lim_{t\to 0^+} \int_{K_{\widehat{f}_t}\setminus K_f}\int_{K_{\widehat{f}_t}\setminus K_f}\frac{\widehat{f}_t(x)\widehat{f}_t(y)}{t|x-y|^{ n-\alpha }}dxdy  + \frac{1}{2}\lim_{t\to 0^+}\int_{K_{f}}\int_{K_{\widehat{f}_t}\setminus K_f}\frac{\widehat{f}_t(x)\widehat{f}_t(y)}{t|x-y|^{n-\alpha }}dxdy\nonumber\\&\quad\quad + \frac{1}{2}\lim_{t\to 0^+}\int_{K_{\widehat{f}_t}\setminus K_f}\int_{K_{f}}\frac{\widehat{f}_t(x)\widehat{f}_t(y)}{t|x-y|^{n-\alpha}}dxdy+ \frac{1}{2}\lim_{t\to 0^+} \int_{K_{f}}\int_{ K_f}\frac{\widehat{f}_t(x)\widehat{f}_t(y)-f(x)f(y)}{t|x-y| ^{ n-\alpha }}dxdy. 
\end{align}
For any $t>0$, by the fact that $\widehat{f}_t\in{\rm LC}_n$ and Proposition \ref{finiteness of I_q}, the Fubini theorem  and the role switching of $x$ and $y$ imply  that  
\begin{align}\label{TF10}\int_{K_{f}}\int_{K_{\widehat{f}_t}\setminus K_f}\frac{\widehat{f}_t(x)\widehat{f}_t(y)}{t|x-y|^{n-\alpha }}dxdy = \int_{K_{\widehat{f}_t}\setminus K_f}\int_{K_{f}}\frac{\widehat{f}_t(x)\widehat{f}_t(y)}{t|x-y|^{n-\alpha}}dxdy.
\end{align}
From   \eqref{int-out0}, \eqref{TF10} and Lemma \ref{Le.J1},  one has 
\begin{align}\label{int-out0-1}
  \delta \Ia(f, \mathfrak{g})
 &\!=\! \lim_{t\to 0^+}\!\int_{K_{f}}\int_{K_{\widehat{f}_t}\setminus K_f}\frac{\widehat{f}_t(x)\widehat{f}_t(y)}{t|x-y|^{n-\alpha }}dxdy+ \frac{1}{2}\lim_{t\to 0^+} \!\int_{K_{f}}\int_{ K_f}\frac{\widehat{f}_t(x)\widehat{f}_t(y)-f(x)f(y)}{t|x-y| ^{ n-\alpha }}dxdy\nonumber\\
 &\!=\!  \lim_{t\to 0^+}\!\int_{{\rm int}(K_{f})}\!\int_{K_{\widehat{f}_t}\setminus K_f}\!\frac{\widehat{f}_t(x)\widehat{f}_t(y)}{t|x-y|^{n-\alpha }}dxdy\!+\! \frac{1}{2}\lim_{t\to 0^+}\! \int_{K_{f}}\!\int_{ K_f}\!\!\!\!\frac{\widehat{f}_t(x)\widehat{f}_t(y)\!-\!f(x)f(y)}{t|x-y| ^{ n-\alpha }}dxdy. 
\end{align}
Let us consider the first term  in  \eqref{int-out0-1}. Note that $K_{\widehat{f}_t}\setminus K_f$ along the direction $u\in \Omega_{f}^c$ is simply empty. Together with the polar coordinates, one gets, for any fixed $t>0$, 
\begin{align*}
\int_{{\rm int}( K_{f})}\int_{K_{\widehat{f}_t}\setminus K_f}\frac{\widehat{f}_t(x)\widehat{f}_t(y)}{t|x-y|^{n-\alpha }}dxdy   =\int_{{\rm int}( K_{f})}\int_{\Omega_f}\int_{\rho_{K_f}(u)}^{\rho_{K_{\widehat{f}_t}}(u)}\frac{\widehat{f}_t(ru)\widehat{f}_t(y)}{t|ru-y|^{n-\alpha }}r^{n-1}drdudy.
\end{align*}
For convenience, we let 
\begin{align}\label{int}
\int_{K_{f}}\int_{K_{\widehat{f}_t}\setminus K_f}\frac{\widehat{f}_t(x)\widehat{f}_t(y)}{t|x-y|^{n-\alpha }}dxdy  =\int_{{\rm int}( K_{f})}\int_{S^{n-1}}\int_{\rho_{K_f}(u)}^{\rho_{K_{\widehat{f}_t}}(u)}\frac{\widehat{f}_t(ru)\widehat{f}_t(y)}{t|ru-y|^{n-\alpha }}r^{n-1}drdudy,
\end{align} where the integral is treated as $0$ on $\Omega _{f}^c.$

Since  $\widehat{f}_t\in \LC$, by Proposition \ref{finiteness of I_q}, the Fubini-Tonelli theorem, and the role switching of $x$ and $y$ (in the second equality), one gets
\begin{align*} 
\int_{K_{f}}  \int_{ K_f}\frac{[\widehat{f}_t(x) - f(x)]f(y) }{t|x-y| ^{ n-\alpha }}dx dy&=\int_{K_{f}}  \int_{ K_f}\frac{[\widehat{f}_t(x) - f(x)]f(y) }{t|x-y| ^{ n-\alpha }}dy dx\\
&=\int_{K_{f}}\int_{ K_f}\frac{\widehat{f}_t(y)f(x)- f(y)f(x)}{ t|x-y| ^{ n-\alpha }}dxdy\\
&=\int_{{\rm int} (K_{f})}\int_{ K_f}\frac{\widehat{f}_t(y)f(x)- f(y)f(x)}{t|x-y| ^{ n-\alpha }}dxdy.\end{align*} 
This further implies that 
\begin{align}\label{TF0}  \nonumber
&  \int_{{\rm int} (K_{f})}\int_{ K_f}\frac{\widehat{f}_t(x)\widehat{f}_t(y)-f(x)f(y)}{2t|x-y| ^{ n-\alpha }}dxdy\\ 
&\quad =   \int_{{\rm int} (K_{f})}\int_{ K_f}\frac{\widehat{f}_t(x)\widehat{f}_t(y)- \widehat{f}_t(x)f(y)}{2t|x-y| ^{ n-\alpha }}dxdy +    \int_{{\rm int}(K_{f})}\int_{ K_f}\frac{\widehat{f}_t(x)f(y)- f(x)f(y)}{2t|x-y| ^{ n-\alpha }}dxdy\nonumber \\
&\quad =  \int_{{\rm int} (K_{f})}\int_{K_f}\frac{\widehat{f}_t(x)\widehat{f}_t(y)- \widehat{f}_t(x)f(y)}{2t|x-y| ^{ n-\alpha }}dxdy +    \int_{{\rm int}(K_{f})}\int_{ K_f}\frac{\widehat{f}_t(y)f(x)- f(y)f(x)}{2t|x-y| ^{ n-\alpha }}dxdy \nonumber\\
&\quad =\!\frac{1}{2} \int_{{\rm int}(K_{f})} \!\int_{ S^{n-1}}\!\int_0^{\rho_{K_f}(u)}\!\!\left(\frac{\widehat{f}_t(r u)\left[\widehat{f}_t(y)-  f(y)\right]}{t|r u-y| ^{ n-\alpha }}+\frac{\left[\widehat{f}_t(y) - f(y)\right]f(r u) }{t|r u-y| ^{ n-\alpha }} \right) r^{n-1} drdudy  .
\end{align}
From  \eqref{int-out0-1}, \eqref{int} and \eqref{TF0},  one has 
\begin{align}\label{int-out1}
 \delta \Ia(f,\mathfrak{g})  =\lim_{t\to 0^+} \int_{{\rm int}(K_{f})}\int_{\sphere} E_t(u,y) dudy.
\end{align}

Next, we calculate    $\lim_{t\to 0^{+}}E_{t }(u,y)$. For any fixed $y\in{\rm int}(K_f)$ and $u\in \Omega_f$,   from   the  mean value theorem for  the definite integrals,  there exists $r_0(t):=r_0(t,u,y)\in (1,1+\beta_1 t)$   such that 
\begin{align}\label{E-01}
 \lim_{t\to0^{+}}\int_{\rho_{K_f}(u)}^{\rho_{K_{\widehat{f}_t}}(u)}\frac{\widehat{f}_t(ru)\widehat{f}_t(y)}{t|r u-y|^{ n-\alpha}}r^{n-1}dr&=\rho_{K_f}(u)^n   \lim_{t\to0^{+}}\int_{1}^{1+t\beta_1}\frac{\widehat{f}_t(r\rho_{K_f}(u)u)\widehat{f}_t(y)}{t|r\rho_{K_f}(u)u-y|^{ n-\alpha }}r^{n-1}dr\nonumber\\
& = \rho_{K_f}(u)^n\lim_{t\to0^{+}}\beta_1\frac{{\widehat{f}}_t(r_0(t)\rho_{K_{f}}(u)u){\widehat{f}}_t(y)}{|r_0(t)\rho_{K_f}(u)u-y|^{ n-\alpha }}r_0(t)^{n-1}\nonumber\\
&= \beta_1\rho_{K_f}(u)^n\frac{  f(\rho_{K_{f}}(u)u)f(y)}{|\rho_{K_f}(u)u-y|^{n-\alpha}},
\end{align} 
where we have used \eqref{hat-f-to0}.   On the other hand,  for almost all $y\in {\rm int}(K_f)$,  \eqref{rotem-formula1}  implies that 
\begin{align*}
\lim_{t\to0^{+}} \int_0^{\rho_{K_f}(u)}\frac{(\widehat{f}_t(y)\!-\!  f(y)) \widehat{f}_t(r u)\, r^{n-1}  }{2t|r u-y| ^{ n-\alpha }}dr 
& \! =\!\lim_{t\to0^{+}}\!\frac{\widehat{f}_t(y)\!-\!f(y)}{2t}\times \int_{0}^{\rho_{K_f}(u)} \!\!\! \frac{\widehat{f}_t(ru)  r^{n-1}}{ |ru-y|^{ n-\alpha }} dr\nonumber\\
& \! =\! \frac{ (\beta_2\!+\! \beta_1\varphi^*(\nabla\varphi(y))) \, f(y)}{2}\times\! \lim_{t\to 0^+}\int_{0}^{\rho_{K_f}\!(u)}\!\! \frac{\widehat{f}_t(ru) r^{n-1}}{ |ru\!-\!y|^{ n-\alpha }} dr.
\end{align*}
Since $\widehat{f}_t\in{\rm LC}_n$, by \eqref{In.CF13-11}, one can find two constants $b>0$ and $c\in\mathbb{R}$ such that, for all $t\in [0, 1]$,  
$\widehat{f}_t(x)\le c_0 e^{-b|x|},$ where $c_0\!=\! e^{|c|+|\beta_2-\beta_1c|}.$  Consequently, for  all $u\!\in\! \sphere$ and for all $t\!\in\! [0, 1]$, one gets $$ \frac{\widehat{f}_t(ru)   }{ |ru-y|^{ n-\alpha }} r^{n-1} \leq \frac{c_0 e^{-br}}{ |ru-y|^{ n-\alpha }}r^{n-1}.$$
According to Proposition \ref{estimate-riesz potential},
one gets
\begin{align*}
I_\alpha(c_0 e^{-b|\cdot|},y)
=c_0 \int_{\mathbb{R}^n}\frac{e^{-b|x|}}{|x-y|^{n-\alpha}}dx
=c_0 \int_{S^{n-1}}\int_{0}^\infty\frac{e^{-br}}{|ru-y|^{n-\alpha}}r^{n-1}drdu<\infty.
\end{align*}
So, for almost all $u\in S^{n-1}$, one has
\begin{align}\label{DCT-req-1}
\int_{0}^{\infty}\frac{e^{-br} }{ |ru-y|^{ n-\alpha }} r^{n-1}dr<\infty.
\end{align}   It follows from \eqref{hat-f-to0}, \eqref{DCT-req-1} and the dominated convergence theorem that  
\begin{align*}\lim_{t\to0^{+}}\!  \int_{0}^{\rho_{K_f}(u)}\!\! \! \frac{\widehat{f}_t(ru)}{|ru\!-\!y|^{ n-\alpha }} r^{n-1}dr =\! \int_{0}^{\rho_{K_f}(u)}\!\!\! \lim_{t\to0^{+}} \frac{\widehat{f}_t(ru)}{|ru\!-\!y|^{ n-\alpha }} r^{n-1}dr =\! \int_0^{\rho_{K_f}(u)} \!\!\! \frac{f(ru)}{ |ru\!-\!y|^{ n-\alpha }} r^{n-1}dr
\end{align*}
for almost all $u\in S^{n-1}$.
Hence, for almost all $u\in S^{n-1}$ and $y\in {\rm int}(K_f)$,
\begin{align}
\lim_{t\to0^{+}}\! \int_0^{\rho_{K_f}\!(u)}\!\frac{( \widehat{f}_t(y)\!-\!\! f(y))\widehat{f}_t(r u)\, r^{n-1}}{2t|r u-y| ^{ n-\alpha }} dr \!=\!\frac{ (\beta_2\!+\!\beta_1\varphi^*(\nabla\varphi(y))) f(y)}{2}\times \! \int_0^{\rho_{K_f}(u)}\!\! \frac{f(ru) r^{n-1}}{ |ru\!-\!y|^{ n-\alpha }}dr. \label{E-02}
\end{align}
Similarly,  for almost all $u\in S^{n-1}$ and $y\in {\rm int}(K_f)$, one has
\begin{align}\label{E-03}
\lim_{t\to0^{+}}\! \int_0^{\rho_{K_f}\!(u)}\!\frac{( \widehat{f}_t(y)\!-\!\! f(y))f(r u)\, r^{n-1}}{2t|r u-y| ^{ n-\alpha }} dr \!=\!\frac{ (\beta_2\!+\!\beta_1\varphi^*(\nabla\varphi(y))) f(y)}{2}\times \! \int_0^{\rho_{K_f}(u)}\!\! \frac{f(ru) r^{n-1}}{ |ru\!-\!y|^{ n-\alpha }}dr.\end{align}
Combining \eqref{E-01}, \eqref{E-02} with \eqref{E-03}, one has
\begin{align*}
\lim_{t\to0^{+}}E_t(u,y)&=\beta_1\frac{  f(\rho_{K_{f}}(u)u)f(y)}{|\rho_{K_f}(u)u-y|^{n-\alpha}} \rho_{K_f}(u)^{n}+   \int_0^{\rho_{K_f}(u)}\frac{(\beta_2+\beta_1\varphi^*(\nabla\varphi(y)))f(ru) f(y)  }{ |ru-y| ^{ n-\alpha }} r^{n-1}dr,
\end{align*}
for  almost all  $y\in {\rm int}(K_f)$ and $u\in \sphere$.  Consequently, from Propositon \ref{Le.ff-2}, one has
 \begin{align}\label{int-in0}
& \int_{K_{f}}\int_{\sphere} \lim_{t\to0^{+}} E_{t }(u,y)dudy \nonumber\\
&\quad =\beta_1 \int_{K_{f}}\int_{\sphere}\frac{  f(\rho_{K_{f}}(u)u)f(y)}{|\rho_{K_{f}}(u)u-y|^{n-\alpha}}\rho_{K_f}(u)^{n}dudy \nonumber\\
&\quad\quad+ \int_{K_{f}}\int_{\sphere} \int_0^{\rho_{K_f}(u)}\frac{(\beta_2+\beta_1\varphi^*(\nabla\varphi(y))) f(ru)f(y)}{ |ru-y|^{n-\alpha}} r^{n-1}drdudy \nonumber\\
&\quad =\beta_1\int_{K_{f}}\!\int_{ \partial K_f}\!\!\!\frac{\langle x,\nu_{K_f}(x)\rangle  f(x)f(y)}{|x-y|^{n-\alpha}} d\mathcal{H}^{n-1}(x)dy+\beta_1  \int_{K_f}\! \int_{K_f}\!\!  \frac{ \varphi^*(\nabla\varphi(y))f(x)f(y)}{|x-y|^{n-\alpha}} dxdy\!+\!\beta_2\Ia(f)\nonumber \\
&\quad= \beta_1 \delta \Ia(f,f)+\beta_2 \Ia(f),
\end{align} where we have used the polar coordinates (for the integral over $K_f\times K_f$) and the variable change $\rho_{K_f}\!(u)u\!=\! x$ (for the integral over $K_f \!\times\! \partial K_f$). The desired equality follows  from \eqref{formula 4.6},  \eqref{int-out1} and \eqref{int-in0}. 
\end{proof}

 For any convex bodies $K,L$ containing  the origin in their interiors, the following  has been  proved in \cite{HLYZ16}
\begin{eqnarray}\label{radial-diff}
\lim_{t\to0^{+}}\frac{\rho_{K+tL}(u)-\rho_{K}(u)}{t}=\frac{h_{L}(\nu_{K}(\rho_{K}(u)u))}{\langle u, \nu_{K}(\rho_{K}(u)u)\rangle} ,
\end{eqnarray}
for  almost all  $u\in S^{n-1}$. Here $\rho_K(u)=\sup\{\lambda:\lambda u\in K\}$  is the radial function of $K$. In fact,    \eqref{radial-diff}  also holds when $K$ and $L$ are unbounded but with $\rho_K$ and $\rho_L$ being finite.

\begin{lemma}\label{radial-diff-lem}
Let $f\!=\!e^{-\varphi}\!\in\!\LC$ with $o\!\in\!{\rm int}(K_f)$ and $g\!=\!e^{-\psi}\!\in\!\LC$.  If $\varphi$ and $\psi$ satisfy \eqref{nat-condition-1}, then 
\begin{align*}
\lim_{t\to0^{+}}\frac{\rho_{K_{f_t}}(u)-\rho_{K_f}(u)}{t}=\frac{h_{K_g}(\nu_{K_f}(\rho_{K_f}(u)u))}{\langle u, \nu_{K_f}(\rho_{K_f}(u)u)\rangle} ,
\end{align*}
for  almost all  $u\in \Omega_f$. Here $f_t=f\oplus (t\bullet g)$.
\end{lemma}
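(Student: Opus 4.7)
The plan is to identify $K_{f_t}$ as the closure of a Minkowski sum and derive the radial derivative from the additivity of the support function, thereby bypassing a direct appeal to \eqref{radial-diff} (which is stated only for bounded bodies). The first observation is that Asplund addition corresponds to Minkowski addition of effective domains: since $t\psi^{\ast}=(t\psi(\cdot/t))^{\ast}$, the sum $\varphi^{\ast}+t\psi^{\ast}$ is the Legendre transform of the inf-convolution $\varphi\,\Box\,(t\psi(\cdot/t))$, whose effective domain is $K_f+tK_g$. Taking a further Legendre transform therefore gives $K_{f_t}=\overline{K_f+tK_g}$, and consequently the support-function identity $h_{K_{f_t}}(v)=h_{K_f}(v)+t\,h_{K_g}(v)$ holds for every $v\in\Rn$.

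Two further consequences of \eqref{nat-condition-1} are needed. First, $\psi(o)=-\inf\psi^{\ast}<\infty$, so $o\in K_g$ and therefore $K_f\subseteq K_{f_t}$ for every $t>0$. Second, dividing \eqref{nat-condition-1} by $r$, evaluating at $ry$, and letting $r\to\infty$ (using $\varphi^{\ast}(ry)/r\nearrow h_{K_f}(y)$ by monotone convergence in the defining supremum) yields $h_{K_g}\le\beta_1 h_{K_f}$ pointwise on $\Rn$. In particular, if $u\in\Omega_f$ and $v^{\ast}=\nu_{K_f}(\rho_{K_f}(u)u)$, the identity $h_{K_f}(v^{\ast})=\rho_{K_f}(u)\langle u,v^{\ast}\rangle<\infty$ forces $h_{K_g}(v^{\ast})<\infty$.

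With these tools, fix $u\in\Omega_f$ at which $v^{\ast}=\nu_{K_f}(\rho_{K_f}(u)u)$ is uniquely defined (a set of full $\mathcal{H}^{n-1}$-measure in $\Omega_f$). The bound $\rho_K(u)\le h_K(v)/\langle u,v\rangle$ valid for any outward $v$, combined with equality at $v^{\ast}$ for $K_f$, supplies the upper estimate
\begin{align*}
\frac{\rho_{K_{f_t}}(u)-\rho_{K_f}(u)}{t}\le\frac{h_{K_{f_t}}(v^{\ast})-h_{K_f}(v^{\ast})}{t\langle u,v^{\ast}\rangle}=\frac{h_{K_g}(v^{\ast})}{\langle u,v^{\ast}\rangle}.
\end{align*}
For the reverse bound, one selects an outer unit normal $v_t\in N_{K_{f_t}}(p_t)\cap\sphere$ at $p_t=\rho_{K_{f_t}}(u)u$; then $\rho_{K_{f_t}}(u)\langle u,v_t\rangle=h_{K_{f_t}}(v_t)$ together with $\rho_{K_f}(u)\langle u,v_t\rangle\le h_{K_f}(v_t)$ yields
\begin{align*}
\frac{\rho_{K_{f_t}}(u)-\rho_{K_f}(u)}{t}\ge\frac{h_{K_g}(v_t)}{\langle u,v_t\rangle}.
\end{align*}

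The closing step, which is also the main obstacle, is to prove $v_t\to v^{\ast}$ as $t\to 0^+$. The upper bound already forces $p_t\to p:=\rho_{K_f}(u)u$; the inclusion $K_f\subseteq K_{f_t}$ gives $\langle x-p_t,v_t\rangle\le 0$ for every $x\in K_f$, so any subsequential $\sphere$-limit $v$ of $v_t$ lies in $N_{K_f}(p)\cap\sphere=\{v^{\ast}\}$ by uniqueness. Lower semi-continuity of $h_{K_g}$ at the finite value $v^{\ast}$ then delivers $\liminf_{t\to 0^+}h_{K_g}(v_t)\ge h_{K_g}(v^{\ast})$, matching the upper bound. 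The delicate point is precisely this convergence of normals, which leans on both $o\in K_g$ (extracted from \eqref{nat-condition-1}) and the almost-everywhere uniqueness of the outward normal to the possibly unbounded convex set $K_f$.
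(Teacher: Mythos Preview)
Your proof is correct and follows the same overall scheme as the paper: sandwich the difference quotient between $h_{K_g}(v_t)/\langle u,v_t\rangle$ (using an outer normal $v_t$ to $K_{f_t}$ at $p_t$) and $h_{K_g}(v^\ast)/\langle u,v^\ast\rangle$ (using the unique normal $v^\ast$ to $K_f$ at $p$), and then show $v_t\to v^\ast$. The only notable difference lies in this last step. The paper invokes the sandwich $K_f\subseteq K_{f_t}\subseteq(1+\beta_1 t)K_f$ together with a truncation $K_f\cap B_2^n(o,R_0)$ to pass to the limit in the support-function identity $h_{K_{f_t}}(v_t)=\langle p_t,v_t\rangle$ and thereby identify any subsequential limit $v'$ as a normal to $K_f$ at $p$. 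You instead argue directly from the normal-cone characterization: since $v_t\in N_{K_{f_t}}(p_t)$ and $K_f\subseteq K_{f_t}$, one has $\langle x-p_t,v_t\rangle\le 0$ for all $x\in K_f$, and passing to the limit forces any accumulation point into $N_{K_f}(p)\cap\sphere=\{v^\ast\}$. Your route is cleaner and avoids the ball-truncation bookkeeping; the paper's argument buys nothing additional here. Your final appeal to the lower semi-continuity of $h_{K_g}$ (automatic for any support function) to close the liminf is exactly what the paper uses implicitly after establishing $v_t\to v$.
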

\begin{proof}
For any $N\in\mathbb{N}$, we set 
$\widehat{K}_{f,N}:=K_f\cap  B_2^n(o,N)$, 
which is a convex body containing the origin in its interior. Let $\sigma_{f,N}$ be the subset of $\Omega_f$ such that,  if $u\in \sigma_{f,N}$, then $\rho_{K_f}(u)u$ has more than one outer normal vector.  Indeed, for each $N\in \mathbb{N}$, $\sigma_{f,N}$ is of spherical Lebesgue  measure zero. Let $\eta_{f}=\bigcup_{N\in\mathbb{N}} \sigma_{f,N}$, and then     $$0\leq \mathcal{H}^{n-1}(\eta_{f})\leq \sum_{N=1}^{\infty}\mathcal{H}^{n-1}(\sigma_{f,N})=0,$$ 
 i.e., the set  $\eta_{f} \subset \Omega_f$  is of   $\mathcal{H}^{n-1}$-measure zero.

  Let $u\in \Omega_f\setminus \eta_f$.  Then  there is only one normal vector at $\rho_{K_f}(u)u$. Let $v_t$ be any normal vector of $K_{f_t}$ at the point $\rho_{K_{f_t}}(u)u\in \partial K_{f_t}$.  One can get 
 $$\rho_{K_{f_t}}(u)=\frac{h_{K_{f_t}}(v_t)}{\langle u,v_t\rangle } \quad \text{and}\quad \rho_{K_{f}}(u)\leq \frac{h_{K_{f}}(v_t)}{\langle u,v_t\rangle }.$$ 
Note that $h_{K_{f_t}}(v_t)\leq\rho_{K_{f_t}}(u)\leq (1+\beta_1t) \rho_{K_{f}}(u)<+\infty$.  It follows from  \cite[Proposition 2.1]{CF13} that $K_{f_t}\!\!=\!\!K_f\!+\! tK_g$ and thus $h_{K_{f_t}}\!\!=\! h_{K_f}\!+\!t h_{K_g}$, whenever $h_{K_f}$ and $h_{K_g}$ are  finite. This further gives  
\begin{align}\label{supp-1}
    \frac{\rho_{K_{f_t}}(u)-\rho_{K_{f}}(u)}{t}\geq \frac{h_{K_{f_t}}(v_t)-h_{K_{f}}(v_t)}{\langle u,v_t\rangle  t}=\frac{h_{K_{g}}(v_t)}{\langle u,v_t\rangle}.
\end{align}
Similarly, if we let $v\in S^{n-1}$ (which is unique) be such that $h_{K_f}(v)=\langle \rho_{K_f}(u)u,v\rangle$, then  
\begin{align}\label{supp-2}
    \frac{\rho_{K_{f_t}}(u)-\rho_{K_{f}}(u)}{t}\leq \frac{h_{K_{g}}(v)}{\langle u,v\rangle}.
\end{align} We need to show that $v_t\to v$  as $t\to0^{+}$.  To this end,  for any $t_k\to 0^{+}$, we  can pick a  convergent subsequence  of $v_{t_k}$ (due to the compactness of $\sphere$), which will be still denoted by $v_{t_k}$, such that $v_{t_{k}}\to v'$ as $t_k\to 0^{+}$. Since $K_f\subseteq K_{f_{t_k}}\subseteq (1+\beta_1 t_k)K_f$, one gets $\rho_{K_{f_{t_k}}}(u)\rightarrow \rho_{K_{f}}(u)$ and 
\begin{align}
    \lim_{k\rightarrow \infty} h_{K_{f_{t_k}}}(v_{t_k})= \lim_{k\rightarrow \infty} \langle \rho_{K_{f_{t_k}}}(u)u,v_{t_k}\rangle =\langle \rho_{K_{f}}(u)u,v'\rangle. \label{new-5-32}
\end{align}

On the other hand, one can take $R_0=2(1+\beta_1)^2\rho_{K_f}(u)$, and then $h_{K_{f_t}}(v_t)\leq \frac{R_0}{2}$ for all $t\in [0, 1]$. In particular,  for all $k\in \mathbb{N}$,   $$h_{K_{f}\cap B_2^n(o, R_0)}(v_{t_k})=h_{K_f}(v_{t_k}) \leq h_{K_{f_{t_k}}}(v_{t_k}) \leq \frac{R_0}{2}.$$   Note that the support function of the convex body  $K_{f}\cap B_2^n(o, R_0)$ is continuous on $\sphere$. Therefore, 
$$\lim_{k\rightarrow \infty}  h_{K_{f}}(v_{t_k})=\lim_{k\rightarrow \infty} h_{K_{f}\cap B_2^n(o, R_0)}(v_{t_k})= h_{K_{f}\cap B_2^n(o, R_0)}(v')\leq \frac{R_0}{2},$$ which further implies 
$$\lim_{k\rightarrow \infty}  h_{K_{f}}(v_{t_k}) = h_{K_{f}\cap B_2^n(o, R_0)}(v') = h_{K_{f}}(v'). $$ Clearly,  $h_{K_{g}}(v_{t_k})\!\leq\! \beta_1 h_{K_{f}}(v_{t_k}) \!\leq \! \beta_1  h_{K_{f_{t_k}}}(v_{t_k})\!\leq\! \beta_1 (1+\beta_1) \rho_{K_f}(u)\!<\!\infty$ for all $k\in \mathbb{N}$. Consequently,  $$ \lim_{k\rightarrow \infty}  h_{K_{f_{t_k}}}(v_{t_k})=\lim_{k\rightarrow \infty}  h_{K_{f}}(v_{t_k})+\lim_{k\rightarrow \infty}  t_k h_{K_{g}}(v_{t_k})=h_{K_f}(v').$$ 
Together with \eqref{new-5-32}, one gets $h_{K_f}(v')
=\langle \rho_{K_{f}}(u)u,v'\rangle.$ Hence $v'$ is a normal vector of $\partial K_f$ at $\rho_{K_f}(u)u$. Due to the uniqueness of the normal vector of $\partial K_f$ at $\rho_{K_{f}}(u)u$, one gets $v=v'$. So  $v_{t_k}\to v$  as $t_k\to0^{+}$ and thus $v_t\rightarrow v$ as $t\to 0^+$. Combining with \eqref{supp-1} and \eqref{supp-2},  we can obtain the desired formula. 
\end{proof}

All auxiliary lemmas have been completed,  and  we now  can prove Theorem
\ref{var.bound.-1}.

\begin{proof}[Proof of Theorem \ref{var.bound.-1}] Let $f_t=f\oplus (t\bullet g)$ and $\widehat{f}_t=f\oplus (t\bullet e^{-(\beta_1 \varphi^*+\beta_2)^*)})$  for some $\beta_1>0$ and $\beta_2\in\R$. Condition \eqref{nat-condition-1} implies that  $f_t\leq \widehat{f}_t$  and $K_{f_t}\subseteq K_{\widehat{f}_t}$.  Lemma \ref{Le.J1} deduces that
\begin{align*}
0=\lim_{t\to 0^+}\int_{K_{\widehat{f}_t}\setminus K_f}\int_{K_{\widehat{f}_t}\setminus K_f}\frac{\widehat{f}_t(x)\widehat{f}_t(y)}{t|x-y|^{n-\alpha}}dxdy\geq\lim_{t\to 0^+}\int_{K_{f_t}\setminus K_f}\int_{K_{f_t}\setminus K_f}\frac{f_t(x)f_t(y)}{t|x-y|^{n-\alpha}}dxdy \geq 0. 
\end{align*} In particular, we have \begin{align*} 
 \lim_{t\to 0^+}\int_{K_{f_t}\setminus K_f}\int_{K_{f_t}\setminus K_f}\frac{f_t(x)f_t(y)}{t|x-y|^{n-\alpha}}dxdy = 0.
\end{align*}

  For any $t>0$, $u\in S^{n-1}$ and $y\in {\rm int }(K_f)$, we set
  \begin{align*}
F_{t }(u,y)=\int_{\rho_{K_f}(u)}^{\rho_{K_{f_t}}(u)}\!\frac{f_t(r u)f_t(y)}{t|r u-y|^{ n-\alpha}}r^{n-1}dr+\!\int_0^{\rho_{K_f}(u)}\! \frac{f_t(r u)\left[f_t(y)\!-\!  f(y)\right]+\left[f_t(y)\!-\!f(y)\right]f(r u)}{2t|r u-y| ^{n-\alpha }} r^{n-1}dr. 
\end{align*} Again, if $u\in\Omega_f^c$, one views $$\int_{\rho_{K_f}(u)}^{\rho_{K_{f_t}}(u)}\frac{f_t(r u)f_t(y)}{t|r u-y|^{ n-\alpha}}r^{n-1}dr=0.$$  

First, we consider the case when $\psi(o)=0$, where $\psi=-\log g$. In this case,  $f\leq f_t$ for any $t\in[0,1]$ (see e.g.,   \cite[Lemma 3.9]{CF13}). Moreover, $f_t\leq \widehat{f}_t$ for any $t\in [0, 1]$ due to condition \eqref{nat-condition-1}. Hence,    $0\le F_t(u,y)\le E_t(u,y)$ for any $u\in S^{n-1}$,  $y\in {\rm int}(K_f)$ and $t\in(0,1)$, where $E_t(\cdot,\cdot)$  is  given  in \eqref{E_t}. Since $f_t,\widehat{f}_t\in{\rm LC}_n$,  Proposition \ref{finiteness of I_q}  and \eqref{int-out1} ensure that
 both  $E_t(u,y)$ and $F_t(u,y)$  are integrable on   $\sphere\times K_f$ for any $t>0$. When $t\to 0^+$, by Lemma \ref{radial-diff-lem} and \eqref{rotem-formula1}, one has, for almost all $u\in\sphere$ and $y\in K_f,$
\begin{align*}
\lim_{t\to0^+}\!E_t(u,y)&\geq  \lim_{t\to 0^+}F_t(u,y)\\ 
&= \frac{ h_{K_g}(\nu_{K_f}\!(\rho_{K_f}\!(u)u))\rho_{K_f}\!(u)^{n-\!1}\! f(\rho_{K_{f}}\!(u)u)f(y)}{|\rho_{K_f}\!(u)u-y|^{n-\alpha}\langle u, \nu_{K_f}(\rho_{K_f}(u)u)\rangle} \!+\! \psi^*(\nabla\varphi(y))f(y) \!\! \int_0^{\rho_{K_f}\!(u)}\!\! \frac{f(ru)  r^{n-1} }{ |r u\!-\!y|^{ n-\alpha}}dr,
\end{align*}
where, if $u\in \Omega_f^c$, the first term in the right hands side of the equality is treated as $0$.
Together with Lemma \ref{Le.Iff-bound.}, the general dominated convergence theorem can be applied to get
\begin{align}\label{lim-F}
&\lim_{t\to 0^+}\int_{K_f}\int_{S^{n-1}}F_t(u,y)dxdy =\int_{K_f}\int_{S^{n-1}}\lim_{t\to 0^+}F_t(x,y)dxdy\nonumber\\
&\quad=\int_{K_f}\int_{S^{n-1}} \int_{\rho_{K_f}(u)}^{\rho_{K_{f_t}}(u)}\!\frac{f_t(r u)f_t(y)}{t|r u-y|^{ n-\alpha}}r^{n-1}dr\,du\,dy \nonumber \\ &\quad\quad + \int_{K_f}\int_{S^{n-1}} \!\int_0^{\rho_{K_f}(u)}\! \frac{f_t(r u)\left[f_t(y)\!-\!  f(y)\right]+\left[f_t(y)\!-\!f(y)\right]f(r u)}{2t|r u-y| ^{n-\alpha }} r^{n-1}drdudy
\nonumber\\
&\quad=\int_{K_f}\int_{\partial K_f} \frac{h_{K_g}(\nu_{K_f}(x))  f(x)f(y)}{|x-y|^{n-\alpha}}d\mathcal{H}^{n-1}(x)dy+\int_{K_f}\int_{K_f}\frac{\psi^\ast(\nabla\varphi(y))f(x)f(y)}{|x-y|^{n-\alpha}}dxdy,
\end{align} where we have used the polar coordinates (for the integral over $K_f\times K_f$) and the variable change $\rho_{K_f}(u)u=x$ (for the integral over $K_f\times \partial K_f$). 

Along the same lines of the proof for \eqref{int-out1}, one can get 
\begin{align*}
\delta \Ia(f,g)
&=\frac{1}{2}\lim_{t\to 0^+}\int_{K_{f_t}}\int_{K_{f_t}}\frac{f_t(x)f_t(y)-f(x)f(y)}{t|x-y|^{n-\alpha}}dxdy\\
&=\frac{1}{2}\lim_{t\to 0^+}\int_{K_{f_t}\setminus K_f}\int_{K_{f_t}\setminus K_f}\frac{f_t(x)f_t(y)}{t|x-y|^{n-\alpha}}dxdy+\frac{1}{2}\lim_{t\to 0^+}\int_{K_{f}}\int_{K_{f_t}\setminus K_f}\frac{f_t(x)f_t(y)}{t|x-y|^{n-\alpha}}dxdy\\
&\quad+\frac{1}{2}\lim_{t\to 0^+}\int_{K_{f_t}\setminus K_f}\int_{K_{f}}\frac{f_t(x)f_t(y)}{t|x-y|^{n-\alpha}}dxdy +\frac{1}{2}\lim_{t\to 0^+}\int_{K_f}\int_{K_{f}}\frac{f_t(x)f_t(y)-f(x)f(y)}{t|x-y|^{n-\alpha}}dxdy\\
&=\lim_{t\to 0^+}\int_{K_{f}}\int_{K_{f_t}\setminus K_f}\frac{f_t(x)f_t(y)}{t|x-y|^{n-\alpha}}dxdy +\frac{1}{2}\lim_{t\to 0^+}\int_{K_f}\int_{K_{f}}\frac{f_t(x)f_t(y)-f(x)f(y)}{t|x-y|^{n-\alpha}}dxdy\\
&=\lim_{t\to 0^+}\int_{K_f}\int_{S^{n-1}}F_t(u,y)dxdy.
\end{align*}
The desired formula follows from  \eqref{lim-F} and the Fubini theorem, namely, \begin{align*}
 \delta \Ia(f,g) &=\lim_{t\to 0^+}\int_{K_f}\int_{S^{n-1}}F_t(u,y)dxdy\\  &=\int_{K_f}\int_{S^{n-1}} \lim_{t\to 0^+} F_t(u,y)dxdy \\ &=\int_{K_f}\int_{\partial K_f} \frac{h_{K_g}(\nu_{K_f}(x))  f(x)f(y)}{|x-y|^{n-\alpha}}d\mathcal{H}^{n-1}(x)dy+\int_{K_f}\int_{K_f}\frac{\psi^\ast(\nabla\varphi(y))f(x)f(y)}{|x-y|^{n-\alpha}}dxdy\\ &=\int_{\partial K_f}\int_{K_f} \frac{h_{K_g}(\nu_{K_f}(x))  f(x)f(y)}{|x-y|^{n-\alpha}}dyd\mathcal{H}^{n-1}(x)+\int_{K_f}\int_{K_f}\frac{\psi^\ast(\nabla\varphi(x))f(x)f(y)}{|x-y|^{n-\alpha}}dxdy. 
\end{align*} This completes the proof of Theorem \ref{var.bound.-1} when $\psi(o)=0$.    

In general, if $\psi(o)\neq0$, we set $\widetilde{\psi}=\psi-\psi(o)$ and $\widetilde{f}_t=f\oplus (t\bullet \widetilde{g})$ where $\widetilde{g}=e^{-\widetilde{\psi}}$. Then $\widetilde{\psi}(o)=0$. By  \eqref{def-dual} and \eqref{alpha-energy-def}, one has  
\begin{align*} 
\delta \Ia(f,\widetilde{g})&=\!\!  \int_{\partial K_f}\! \int_{K_f}  \!\!\! \!\!  \frac{h_{K_g}(\nu_{K_f}(x))  f(x)f(y)}{|x-y|^{n-\alpha}}dyd\mathcal{H}^{n-1}(x)\!+\!\! \int_{K_f}\!\int_{K_f} \!\!\! \! \frac{\widetilde{\psi}^\ast(\nabla\varphi(x))f(x)f(y)}{|x-y|^{n-\alpha}}dxdy\\
&=\!\!  \int_{\partial K_f}\! \int_{K_f} \!\!\! \!\! \frac{h_{K_g}\!(\nu_{K_f}\!(x))  f(x)f(y)}{|x-y|^{n-\alpha}}dyd\mathcal{H}^{n-1}(x)\!+\!\! \int_{K_f}\! \int_{K_f} \!\!\! \!\frac{\psi^\ast(\nabla\varphi(x))f(x)f(y)}{|x-y|^{n-\alpha}}dxdy \! +\! \psi(o)  \Ia(f).
\end{align*}
From \eqref{add-1}, one  has $  
\delta \Ia(f,\widetilde{g})=  \delta \Ia(f,g)+\psi(o)  \Ia(f),$ which immediately yields the desired result in Theorem \ref{var.bound.-1}. 
\end{proof}

\section{The Riesz \texorpdfstring{$\alpha$}{}-energy Minkowski problem}
\label{section:-6}

Theorem \ref{var.bound.-1}  motivates the definition of a measure for log-concave functions $f\in \LC$ defined on $\Rn$, as explained in \eqref{def-Rma-22-1}. 

\begin{definition}
Let $f=e^{-\varphi}\in\LC$ and $\alpha>0$. Define, $\Rma(f,\cdot)$, the 
Riesz $\alpha$-energy measure of $f$, as the push-forward of the measure $ I_{\alpha}(f, y) f(y) dy$ under $\nabla \varphi$, i.e., 
\begin{align}\label{def-Rma-22}
\int_{\mathbb{R}^n}\mathbf{h}(z)d\Rma(f,z)=\int_{K_f}\int_{K_f} \frac{\mathbf{h}(\nabla\varphi(y))f(x)f(y)}{|x-y|^{n-\alpha}}dxdy =  \int_{K_f}  \mathbf{h}(\nabla\varphi(y)) I_{\alpha}(f, y) f(y) dy, 
\end{align}
for each Borel function $\mathbf{h}:\mathbb{R}^n\to\mathbb{R}$ such that $\mathbf{h}\in\mathcal{L}^1(\Rma(f,\cdot))$ or $h$ is non-negative.
\end{definition} 
  Clearly, for any constant $c>0$, \begin{align}
    \label{Rma-homogeneous} d\Rma(cf,\cdot) =c^2d\Rma(f,\cdot).
\end{align}

Recall the following nature question  regarding the measure $\Rma(f,\cdot)$. \vskip 2mm \noindent{\bf Problem \ref{Chord-Mink-Pro-1} (The Riesz $\alpha$-energy Minkowski problem for log-concave functions).} {\em 
 Let $\mu$ be a finite Borel measure on $\Rn$ and $\alpha>0$. Find the necessary and/or sufficient conditions on $\mu$ so that $
    \mu=\Rma(f,\cdot)$
holds for some log-concave function $f\in\LC$.}

\vskip 2mm  Recall that \eqref{RN} holds, i.e., assuming  enough regularities,   
\begin{align*}
    \frac{\,d\Rma(f, z)}{dz}=I_{\alpha}(f,\nabla \varphi^*(z)) f(\nabla \varphi^*(z))\det (\nabla^2\varphi^*(z)). 
\end{align*} Consequently, Problem \ref{Chord-Mink-Pro-1} reduces to the following Monge-Amp\`{e}re type partial differential equation, with $\varphi$ the unknown convex function: 
\begin{align*}
g(z)=I_{\alpha}(f, \nabla \varphi^*(z)) f(\nabla \varphi^*(z))\det (\nabla^2\varphi^*(z)).
\end{align*}

In this section, we provide a solution to Problem \ref{Chord-Mink-Pro-1} for even data. Let $\mu$ be an even finite Borel measure on $\R^n$. Consider the following  optimization problem: for $\alpha>0,$
\begin{align}\label{optimiza-pro}
 \Theta_{\tau}=\inf\left\{\int_{\mathbb{R}^n}\varphi d\mu:
    \varphi\in \mathcal{L}_e^+(\mu)\ \ \text{and}\ \  \Ia(e^{-\varphi^\ast})\ge \tau \right\},
\end{align} where  $\tau>0$ is a pre-given constant and  $\mathcal{L}_e^+(\mu)$ is the class of all even non-negative integrable functions with respect to   $\mu$. Let $\C(\R^n) $ denote the class of lower semi-continuous convex functions from $\R^n$  to $\R\cup\{+\infty\}$.  For any $\varphi \in \mathcal{L}_e^+(\mu)$, it is clear that   $\varphi\geq (\varphi^*)^*$ and $\varphi^*=((\varphi^*)^*)^*$, which imply, for $\alpha>0,$ 
\begin{align*}
\int_{\mathbb{R}^n}\varphi d\mu\geq \int_{\mathbb{R}^n}(\varphi^*)^*d\mu \quad\text{and}\quad \Ia(e^{-\varphi^\ast})= \Ia(e^{-((\varphi^*)^*)^*}).
\end{align*}
Therefore,  to solve  the optimization problem  \eqref{optimiza-pro} is  equivalent to solving the following problem: for $\alpha>0,$
\begin{align}\label{optimiza-convex}
  \Theta_{\tau}=\inf\left\{\int_{\mathbb{R}^n}\varphi d\mu:
    \varphi\in \mathcal{L}_e^+(\mu) \cap \C(\R^n) \ \ \text{and}\ \  \Ia(e^{-\varphi^\ast})\ge  \tau \right\}.
\end{align}

We will explain that the existence of solutions to the Riesz $\alpha$-energy Minkowski problem for log-concave functions (i.e., Problem \ref{Chord-Mink-Pro-1})  can be established by solving the optimization problem \eqref{optimiza-convex}. To this end, we first show that the infimum in  \eqref{optimiza-convex} is attained when the given Borel measure $\mu$ is even. The natural condition on the pre-given measure $\mu$ is  $\mu\in \mathscr{M}$, where $\mathscr{M}$ is the set of  all  even nonzero finite Borel measures on $\R^n$  satisfying that $\mu$ is not concentrated in any subspace and the first moment of $\mu$ is finite.  Equivalently, if $\mu\in \mathscr{M}$, then there exists a constant $c_\mu>0$ (depending on the measure $\mu$ only) such that
\begin{align}\label{not-concentrated}
&\int_{\mathbb{R}^n}|\langle x, u\rangle|d\mu>c_\mu \ \ \mathrm{for\ any} \ \ u\in \sphere; 
\\ \label{first-moment}
&\int_{\mathbb{R}^n}|x|d\mu<\infty.
\end{align} 

We now state  the existence of optimizers for  \eqref{optimiza-convex}.
\begin{proposition}\label{existence-of-function}
Let $\mu\in \mathscr{M}$. There exists    $\varphi\in  \mathcal{L}_e^+(\mu) \cap \C(\R^n)$ solving the optimization problem \eqref{optimiza-convex}.  Moreover, $\varphi$ is strictly positive on $\mathbb{R}^n$ and $\Ia(e^{-\varphi^\ast})=\tau$ when $\tau$ is big enough.
\end{proposition}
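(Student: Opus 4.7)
The plan is to follow the direct method in the calculus of variations, using the evenness of $\mu$ and the non-concentration condition \eqref{not-concentrated} to obtain compactness via a Legendre-duality trick.

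I would first verify that $\Theta_\tau$ is finite by exhibiting an explicit admissible competitor: take $\varphi_R(x)=R|x|$, so that $\varphi_R^*=\chi_{B(o,R)}$, $e^{-\varphi_R^*}=\mathbf 1_{B(o,R)}$, and by \eqref{homogeneity} $\Ia(\mathbf 1_{B(o,R)})=R^{n+\alpha}\Ia(\mathbf 1_{B(o,1)})\ge\tau$ for $R$ large, while $\int\varphi_R\,d\mu<\infty$ by \eqref{first-moment}. Also $\Theta_\tau>0$, since $\varphi\equiv 0$ forces $e^{-\varphi^*}=\mathbf 1_{\{o\}}$, which has zero energy. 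Fix a minimizing sequence $\{\varphi_k\}\subset\mathcal L_e^+(\mu)\cap\C(\R^n)$ with $\int\varphi_k\,d\mu\le\Theta_\tau+1$ and $\Ia(e^{-\varphi_k^*})\ge\tau$.

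The key compactness step uses Fenchel duality and evenness to turn the bound on the objective into a pointwise bound on $\varphi_k^*$. Indeed, $\varphi_k(x)\ge |\langle x,y\rangle|-\varphi_k^*(y)$ (using $\varphi_k(x)=\varphi_k(-x)$), and integrating against $\mu$ with \eqref{not-concentrated} gives
\begin{align*}
\varphi_k^*(y)\ge \frac{c_\mu|y|-(\Theta_\tau+1)}{\mu(\R^n)}\qquad\text{for every }y\in\R^n.
\end{align*}
Taking biconjugates yields the uniform bound $\varphi_k(x)\le(\Theta_\tau+1)/\mu(\R^n)$ on the closed ball $\overline{B(o,R_*)}$ with $R_*=c_\mu/\mu(\R^n)$. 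Since convex functions uniformly bounded on a ball are uniformly Lipschitz on smaller balls, a standard Arzel\`a--Ascoli / Attouch epi-convergence argument (plus a diagonal extraction) produces a subsequence converging to an even, lower semicontinuous convex $\varphi:\R^n\to[0,+\infty]$. The same dual lower bound provides a uniform integrable majorant $e^{-\varphi_k^*(y)}\le e^{(\Theta_\tau+1)/\mu(\R^n)}e^{-c_\mu|y|/\mu(\R^n)}$ whose double Riesz tensor is integrable by (the proof of) Proposition \ref{finiteness of I_q}. Since $\varphi^*$ is finite everywhere (domain $=\R^n$) and therefore continuous, epi-convergence of the conjugates delivers pointwise convergence $\varphi_k^*\to\varphi^*$ a.e., and dominated convergence yields $\Ia(e^{-\varphi^*})=\lim_k\Ia(e^{-\varphi_k^*})\ge\tau$. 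Fatou then gives $\int\varphi\,d\mu\le\Theta_\tau$, so $\varphi$ is a minimizer. Passing the Riesz energy through the epi-limit is the most delicate ingredient and is where I expect the main obstacle to lie.

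For tightness of the constraint, suppose $\Ia(e^{-\varphi^*})>\tau$ and set $\varphi_\epsilon=(1-\epsilon)\varphi$; then $\varphi_\epsilon^*=(1-\epsilon)\varphi^*(\cdot/(1-\epsilon))$ and a direct substitution gives
\begin{align*}
\Ia(e^{-\varphi_\epsilon^*})=(1-\epsilon)^{n+\alpha}\int\!\!\int\frac{(e^{-\varphi^*(u)})^{1-\epsilon}(e^{-\varphi^*(v)})^{1-\epsilon}}{|u-v|^{n-\alpha}}\,du\,dv,
\end{align*}
which by dominated convergence (using the same exponential majorant, valid for $\epsilon\le 1/2$) tends to $\Ia(e^{-\varphi^*})>\tau$ as $\epsilon\to 0^+$; hence $\varphi_\epsilon$ is feasible with strictly smaller cost $(1-\epsilon)\Theta_\tau$, contradicting minimality. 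Strict positivity for large $\tau$ follows from a two-sided comparison: on the one hand, feasibility of $\varphi+s$ for the constraint $e^{2s}\tau$ gives the universal logarithmic bound $\Theta_\tau\le C_1(\mu)+\tfrac{\mu(\R^n)}{2}\log\tau$; on the other hand, if $\varphi(o)=0$ then $\varphi^*\ge 0$, $e^{-\varphi^*}\le 1$, and combining with the exponential decay from the previous paragraph forces the polynomial bound $\tau=\Ia(e^{-\varphi^*})\le C_2(\mu,\alpha,n)\Theta_\tau^{n+\alpha}$. These two estimates are incompatible when $\tau$ is sufficiently large, so $\varphi(o)>0$; and because $\varphi(o)$ is the minimum of the even convex function $\varphi$, this gives $\varphi>0$ on all of $\R^n$.
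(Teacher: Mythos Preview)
Your overall scheme matches the paper's: exhibit a competitor to see $\Theta_\tau<\infty$, extract a limit from a minimizing sequence, pass the constraint through the limit, then argue tightness and strict positivity by comparing a logarithmic upper bound on $\Theta_\tau$ with a power-type lower bound coming from $\varphi(o)=0$. The duality trick $\varphi_k^*(y)\ge (c_\mu|y|-(\Theta_\tau+1))/|\mu|$ is exactly the right uniform estimate, and the tightness argument via $(1-\epsilon)\varphi$ is a valid alternative to the paper's $\varphi-\epsilon$.

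There is, however, a genuine gap in the step where you pass $\Ia$ through the limit. You assert that the epi-limit $\varphi$ satisfies $\mathrm{dom}(\varphi^*)=\R^n$, hence $\varphi^*$ is continuous, hence epi-convergence of the conjugates upgrades to pointwise convergence a.e., and then you invoke dominated convergence. The claim $\mathrm{dom}(\varphi^*)=\R^n$ is neither proved nor true in general: for instance $\varphi(x)=|x|$ is an admissible competitor for small $\tau$, and its conjugate has domain the closed unit ball. Nothing in your argument rules out a minimizer (or epi-limit) with merely linear growth, so you cannot conclude pointwise convergence of $\varphi_k^*$ on all of $\R^n$. The fix is cheap: epi-convergence already gives $\liminf_k\varphi_k^*(y)\ge\varphi^*(y)$ for every $y$, hence $\limsup_k e^{-\varphi_k^*(y)}\le e^{-\varphi^*(y)}$; together with your uniform majorant $e^{-\varphi_k^*}\le Ce^{-c|\cdot|}$ and reverse Fatou on the double integral, this yields $\limsup_k\Ia(e^{-\varphi_k^*})\le\Ia(e^{-\varphi^*})$, which is all that is needed. (The paper obtains the same upper-semicontinuity by approximating $\varphi^*$ from below by finite maxima of affine functions and using pointwise convergence of $\varphi_k$ at finitely many points; your dominated-majorant route is arguably cleaner once corrected.)

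Two smaller points. First, your sentence ``$\Theta_\tau>0$ since $\varphi\equiv0$ gives zero energy'' only excludes $\varphi\equiv0$, not all $\varphi$ with $\int\varphi\,d\mu=0$; the latter forces $\varphi=0$ on $\mathrm{conv}(\mathrm{supp}\,\mu)$, hence $\varphi^*\ge h_{\mathrm{conv}(\mathrm{supp}\,\mu)}$, which bounds $\Ia(e^{-\varphi^*})$ by a constant depending only on $\mu$---so $\Theta_\tau>0$ only for $\tau$ large, which is all the statement claims. Second, the polynomial bound $\tau\le C_2\Theta_\tau^{\,n+\alpha}$ in your strict-positivity step does follow from combining $e^{-\varphi^*}\le 1$ with $e^{-\varphi^*(y)}\le e^{\Theta_\tau/|\mu|}e^{-c_\mu|y|/|\mu|}$, but it requires one more line: the product of these two bounds gives $e^{-\varphi^*(y)}\le e^{-(c_\mu/|\mu|)(|y|-\Theta_\tau/c_\mu)_+}$, and scaling $y\mapsto (\Theta_\tau/c_\mu)\tilde y$ then shows $\Ia(e^{-\varphi^*})\le C(\mu,\alpha,n)\,\Theta_\tau^{\,n+\alpha}$ for $\Theta_\tau$ bounded below. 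The paper reaches the equivalent estimate via the sublevel set $K_\varphi=\{\varphi\le1\}$ (Lemma~\ref{Le.vaphi(o)=0}).
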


The proof of Proposition \ref{existence-of-function} relies on a few lemmas.
\begin{lemma}\label{Le.roc} \cite[Theorem 10.9]{Roc70}
    Let $\Omega\subset\mathbb{R}^n$ be an open convex set and $\{f_i\}_{i\in\mathbb{N}}$ be a   sequence of finite convex functions on $\Omega$. Suppose that the real number sequence $\{f_i(x)\}_{i\in\mathbb{N}}$ is bounded for each $x\in \Omega$. It is then possible to select a subsequence of $\{f_i\}_{i\in\mathbb{N}}$, which  converges to a finite convex function $f$ pointwisely on $\Omega$ and uniformly on any closed
bounded subset of $\Omega$.
\end{lemma}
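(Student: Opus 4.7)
The plan is a direct-method argument. Take a minimizing sequence $\{\varphi_i\} \subset \mathcal{L}_e^+(\mu) \cap \C(\R^n)$ for \eqref{optimiza-convex}: $\Ia(e^{-\varphi_i^*}) \geq \tau$ and $\int \varphi_i\,d\mu \to \Theta_\tau$. Evenness and convexity force $\varphi_i(0) = \min \varphi_i$, so the trivial estimate $\varphi_i(0)\mu(\R^n) \leq \int \varphi_i\,d\mu$ keeps $\{\varphi_i(0)\}$ uniformly bounded.

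The main obstacle is extracting uniform pointwise bounds $\varphi_i(x) \leq C(x)$ for each $x\in\R^n$, so that Lemma \ref{Le.roc} applies. I would argue by contradiction: if $\varphi_i(x_0) \to \infty$ along a subsequence for some $x_0 \neq 0$, then convexity together with the uniform bound $\varphi_i(0)\leq C$ forces $\varphi_i(tx_0) \gtrsim t\,\varphi_i(x_0)$ for $t\geq 1$ (and the opposite ray by evenness). The non-concentration hypothesis \eqref{not-concentrated}, which prevents $\mu$ from sitting on any hyperplane, combined with the convex-combination propagation from the ray through $\pm x_0$ to a surrounding solid cone, spreads this divergence to a $\mu$-massive set, driving $\int \varphi_i\,d\mu \to \infty$ and contradicting minimality. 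With uniform pointwise bounds in hand, Lemma \ref{Le.roc} produces a subsequence (still indexed by $i$) converging pointwise (and uniformly on compacta) to a convex, even, non-negative function $\varphi \in \C(\R^n)$.

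To verify $\varphi$ is a minimizer, Fatou gives $\int \varphi\,d\mu \leq \Theta_\tau$. For the constraint, pointwise convergence of proper lower semi-continuous convex functions $\varphi_i \to \varphi$ yields pointwise convergence $\varphi_i^* \to \varphi^*$ on $\mathrm{int}(\dom\varphi^*)$ (standard Legendre continuity), hence $e^{-\varphi_i^*} \to e^{-\varphi^*}$ almost everywhere. The uniform pointwise bounds on the $\varphi_i$ give, via $\varphi_i^*(y) \geq \langle x, y\rangle - \varphi_i(x)$, a uniform linear lower bound $\varphi_i^*(y) \geq b|y| + c$, and hence a uniform integrable envelope $e^{-\varphi_i^*(y)} \leq C e^{-b|y|}$ of the form \eqref{In.CF13-11}. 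Dominated convergence then passes to the limit in $\Ia$, giving $\Ia(e^{-\varphi^*}) \geq \tau$; so $\varphi$ is feasible and attains the infimum.

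Finally, handle the qualitative properties by variational perturbations. For the active constraint, suppose $\Ia(e^{-\varphi^*}) > \tau$; the rescaled competitor $\varphi_\lambda(x) := \varphi(x/\lambda)$ with $\lambda > 1$ satisfies $\varphi_\lambda \leq \varphi$ pointwise by convexity-evenness with $\varphi \geq 0$, hence yields strictly smaller cost, while $\varphi_\lambda^*(y) = \varphi^*(\lambda y)$ and Proposition \ref{properties-a-energy}(i) give $\Ia(e^{-\varphi_\lambda^*}) = \lambda^{-(n+\alpha)}\Ia(e^{-\varphi^*})$; choosing $\lambda = (\Ia(e^{-\varphi^*})/\tau)^{1/(n+\alpha)} > 1$ produces a feasible improvement, a contradiction. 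For strict positivity when $\tau$ is large, if $\varphi(0) = 0$ the same rescaling yields the sharper bound $\varphi(x/\lambda) \leq \varphi(x)/\lambda$, and combining it with a constant lift $\varphi(x/\lambda) + c$ where $c := \tfrac{n+\alpha}{2}\log\lambda$ keeps $\Ia$ exactly at $\tau$ (via $e^{2c}\lambda^{-(n+\alpha)} = 1$) and produces a feasible competitor of cost at most $\Theta_\tau/\lambda + \tfrac{n+\alpha}{2}\mu(\R^n)\log\lambda$, which is strictly less than $\Theta_\tau$ for some $\lambda > 1$ whenever $\Theta_\tau > \tfrac{n+\alpha}{2}\mu(\R^n)$; since $\Theta_\tau \to \infty$ as $\tau \to \infty$, this threshold is eventually exceeded, forcing $\varphi(0) > 0$, and then $\varphi(x) \geq \varphi(0) > 0$ on all of $\R^n$ by evenness-convexity.
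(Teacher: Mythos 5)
Your proposal does not prove the statement it was assigned. The statement is Lemma \ref{Le.roc}, i.e.\ Rockafellar's selection theorem (Theorem 10.9 in \cite{Roc70}): from any sequence of finite convex functions on an open convex set $\Omega$ that is pointwise bounded, one can extract a subsequence converging pointwise on $\Omega$, and uniformly on closed bounded subsets of $\Omega$, to a finite convex limit. What you wrote is instead a direct-method argument for the existence of a minimizer of the optimization problem \eqref{optimiza-convex} (essentially the content of Proposition \ref{existence-of-function} and Lemma \ref{Le.existence}), and in the middle of it you explicitly \emph{invoke} Lemma \ref{Le.roc} as a black box (``so that Lemma \ref{Le.roc} applies''). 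So relative to the assigned statement the attempt is off-target and, taken literally, circular: the compactness fact you were asked to establish is assumed, and an entirely different variational statement is argued. (For the record, the paper itself offers no proof either; it simply cites Rockafellar.)

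A genuine proof of the lemma would have to supply the following chain, none of which appears in your text: (i) pointwise boundedness of $\{f_i\}$ plus convexity upgrades to uniform boundedness on compact subsets of $\Omega$ (bound above on a simplex by the values at its vertices; bound below at a point $x$ by reflecting through $x$ inside a small ball and using the upper bound); (ii) local uniform boundedness of convex functions gives a uniform local Lipschitz constant on each compact subset with positive distance to $\partial\Omega$; (iii) Arzel\`a--Ascoli on an exhaustion of $\Omega$ by compact sets, combined with a diagonal extraction, yields a subsequence converging uniformly on every closed bounded subset of $\Omega$, hence pointwise on $\Omega$; (iv) the limit is finite and convex as a pointwise limit of convex functions. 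Your energy estimates, the non-concentration condition \eqref{not-concentrated}, the rescaling competitors, and the Legendre-transform continuity arguments are all irrelevant to this purely convex-analytic compactness statement.
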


For $E\subset \Rn,$ let  ${\rm conv} E$ be the closed convex hull of $E$, namely the smallest closed convex set containing $E$. For $\mu\in\mathscr{M}$, the support of $\mu$ is denoted by ${\rm Supp}(\mu)$, and let $$M_\mu=\mathrm{int}({\rm conv}({\rm Supp}(\mu))).$$ 
 For $x_0\in  M_\mu$, by \cite[Lemma 16]{CK2015}, one can find a constant $C_{\mu,x_0}$ so that   for any $\varphi\in  \mathcal{L}_e^+(\mu) \cap \C(\R^n)$,
\begin{align}\label{finite-varphi(x0)}
    \varphi(x_0)\le
    C_{\mu,x_0}\int_{\mathbb{R}^n}\varphi d\mu.
\end{align}
Using  \eqref{finite-varphi(x0)} and Lemma \ref{Le.roc},  we can show that  the infimum in  \eqref{optimiza-convex} is attained at non-negative  convex functions.  We need to remark that the method  we used is inspired by \cite[Lemma 17]{CK2015}. To keep the content complete, we will give its proof, and but keep it brief. 
 
\begin{lemma}\label{Le.existence}
Let $\alpha>0$, $\mu\in\mathscr{M}$ and $\{\varphi_i\}_{i\in\mathbb{N}}$ be a sequence of functions  with  $\varphi_i\in \mathcal{L}_e^+(\mu) \cap \C(\R^n)$ for any $i\in\mathbb{N}$. Assume that
$$
\sup_{i\in\mathbb{N}}\int_{\mathbb{R}^n}\varphi_id\mu<\infty.
$$
Then there exists a subsequence $\{\varphi_{i_j}\}_{j\in\mathbb{N}}$ and a function $\varphi\in  \mathcal{L}_e^+(\mu) \cap \C(\R^n)$ such that
\begin{align}\label{two-limits}
    \int_{\mathbb{R}^n}\varphi d\mu\le\liminf_{j\to\infty}\int_{\mathbb{R}^n}\varphi_{i_j} d\mu
    \ \ \mathrm{and}\ \ \
    \Ia(e^{-\varphi^\ast})\ge
    \limsup_{j\to\infty}\Ia(e^{-\varphi^\ast_{i_j}}).
\end{align}
\end{lemma}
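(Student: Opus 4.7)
The plan is to extract a subsequence converging on the open convex set $M_\mu=\mathrm{int}(\mathrm{conv}(\mathrm{Supp}(\mu)))$ via Lemma \ref{Le.roc}, extend the limit lower semi-continuously to $\R^n$, and verify the two inequalities via Fatou's lemma and the dominated convergence theorem respectively. First, using \eqref{finite-varphi(x0)} and the uniform bound $C:=\sup_i\int_{\R^n}\varphi_i\,d\mu<\infty$, one has $\varphi_i(x_0)\le C_{\mu,x_0}C$ for every $x_0\in M_\mu$, so $\{\varphi_i(x_0)\}_i$ is a bounded real sequence for each such $x_0$. Lemma \ref{Le.roc} combined with a standard diagonal argument over a countable dense subset of $M_\mu$ yields a subsequence $\{\varphi_{i_j}\}$ converging pointwise (and uniformly on compact subsets) on $M_\mu$ to a finite convex function $\varphi_0:M_\mu\to\R$; evenness and non-negativity transfer to $\varphi_0$. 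Define $\varphi(x)=\varphi_0(x)$ on $M_\mu$, $\varphi(x)=\liminf_{M_\mu\ni y\to x}\varphi_0(y)$ on $\partial M_\mu$, and $\varphi(x)=+\infty$ elsewhere; this produces an even, non-negative, lower semi-continuous convex function $\varphi\in\C(\R^n)$.

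For the first inequality in \eqref{two-limits}, I would show that $\varphi(x)\le\liminf_j\varphi_{i_j}(x)$ for $\mu$-almost every $x$. On $M_\mu$ this is just the pointwise limit. For $x\in\partial M_\mu\cap\mathrm{Supp}(\mu)$, fix any $z_0\in M_\mu$ and invoke the convexity inequality $\varphi_{i_j}(z_0+t(x-z_0))\le(1-t)\varphi_{i_j}(z_0)+t\varphi_{i_j}(x)$ with $t\in(0,1)$: rearranging, taking $\liminf_j$ (noting $z_0+t(x-z_0)\in M_\mu$ so the left side converges to $\varphi_0$ there), and then letting $t\to 1^-$ (using that $t\mapsto\varphi_0(z_0+t(x-z_0))$ is convex and hence has a monotone limit along the segment that is at least $\varphi(x)$) delivers the required bound. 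Since $\mathrm{Supp}(\mu)\subseteq\overline{M_\mu}$, Fatou's lemma then gives $\int\varphi\,d\mu\le\liminf_j\int\varphi_{i_j}\,d\mu<\infty$, so $\varphi\in\mathcal{L}_e^+(\mu)$.

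For the second inequality in \eqref{two-limits}, applying \eqref{finite-varphi(x0)} at finitely many points in $M_\mu$ whose convex hull contains a Euclidean ball around the origin yields a uniform bound $\varphi_{i_j}\le M$ on some $B_2^n(o,R)$ with $R,M$ independent of $j$. Consequently $\varphi_{i_j}^*(y)\ge R|y|-M$ for all $j$ and $y\in\R^n$, giving the uniform domination
\begin{align*}
e^{-\varphi_{i_j}^*(y)}\le e^{M}e^{-R|y|},\qquad j\in\mathbb{N},\ y\in\R^n,
\end{align*}
where the right-hand side has finite Riesz $\alpha$-energy by Proposition \ref{finiteness of I_q} together with \eqref{homogeneity}. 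Pointwise convergence $\varphi_{i_j}\to\varphi$ on the open set $M_\mu$ implies pointwise convergence of the Legendre transforms $\varphi_{i_j}^*\to\varphi^*$ almost everywhere on $\R^n$, a classical consequence of the epi-convergence theory for convex functions (pointwise convergence on $\mathrm{int}(\mathrm{dom}\,\varphi)$ transfers to pointwise convergence of the conjugates at every continuity point of $\varphi^*$). The dominated convergence theorem then applies to the double integral defining $\Ia$, yielding $\Ia(e^{-\varphi_{i_j}^*})\to\Ia(e^{-\varphi^*})$ and in particular the desired $\Ia(e^{-\varphi^*})\ge\limsup_j\Ia(e^{-\varphi_{i_j}^*})$.

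The main obstacle is securing the $\mu$-almost-everywhere pointwise inequality $\varphi\le\liminf_j\varphi_{i_j}$ on the boundary $\partial M_\mu$, since Lemma \ref{Le.roc} gives no direct information there and $\mu$ may carry mass on this boundary; the one-sided convexity estimate above is what closes this gap. A secondary technical point is justifying the a.e.\ pointwise convergence of the Legendre transforms, which relies on the standard fact that pointwise convergence of convex functions on the interior of the common domain is enough to force pointwise convergence of the conjugates at interior points of their domains, combined with the uniform linear minorant $\varphi_{i_j}^*(y)\ge R|y|-M$ that makes the dominated convergence step legitimate.
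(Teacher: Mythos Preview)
Your argument for the first inequality is sound and matches the standard one the paper cites. The gap is in the second inequality: the claimed almost-everywhere pointwise convergence $\varphi_{i_j}^*\to\varphi^*$ is false in general, so the dominated convergence step cannot be justified as written. The point is that $\varphi_{i_j}\in\C(\R^n)$ may have effective domain much larger than $\overline{M}_\mu$, whereas your limit $\varphi$ has been set to $+\infty$ off $\overline{M}_\mu$ by construction; hence $\varphi_{i_j}^*(y)=\sup_{x\in\R^n}\{\langle x,y\rangle-\varphi_{i_j}(x)\}$ may strictly exceed $\varphi^*(y)=\sup_{x\in\overline{M}_\mu}\{\langle x,y\rangle-\varphi(x)\}$ for all $j$. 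Concretely, if $M_\mu\subsetneq\R^n$ and one takes the constant sequence $\varphi_{i_j}(x)=|x|$, then $\varphi_{i_j}^*=\chi_{B_2^n}$ for every $j$, while your $\varphi$ equals $|x|$ on $\overline{M}_\mu$ and $+\infty$ outside, giving a finite $\varphi^*$ on all of $\R^n$; the conjugates do not converge on $\{|y|>1\}$. The epi-convergence principle you invoke does not apply, because epi-convergence of $\varphi_{i_j}$ to $\varphi$ would require $\liminf_j\varphi_{i_j}(x)=+\infty$ for $x\notin\overline{M}_\mu$, and nothing in the hypotheses forces this.

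The remedy is immediate and keeps your domination step. What you do get, directly from pointwise convergence on $M_\mu$, is the one-sided bound $\liminf_j\varphi_{i_j}^*(y)\ge\varphi^*(y)$ for every $y$ (just restrict the supremum defining $\varphi_{i_j}^*$ to $x\in M_\mu$ and pass to the limit). Hence $\limsup_j e^{-\varphi_{i_j}^*(y)}\le e^{-\varphi^*(y)}$ pointwise, and combined with your uniform majorant $e^{-\varphi_{i_j}^*}\le e^{M}e^{-R|\cdot|}$ the \emph{reverse} Fatou lemma in the double integral yields $\limsup_j\Ia(e^{-\varphi_{i_j}^*})\le\Ia(e^{-\varphi^*})$, which is all that is needed. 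For comparison, the paper takes a different and more elementary route that needs neither the domination nor Fatou: it writes $\varphi^*(y)=\sup_k\{\langle x_k,y\rangle-\varphi(x_k)\}$ over a dense sequence $\{x_k\}\subset\overline{M}_\mu$, passes by monotone convergence to the finite truncations $\phi_{k_0}^*(y)=\max_{k\le k_0}\{\langle x_k,y\rangle-\varphi(x_k)\}$, and then uses convergence of $\varphi_{i_j}$ only at the finitely many points $x_1,\dots,x_{k_0}\in M_\mu$ to obtain $\varphi_{i_j}^*\ge\phi_{k_0}^*-\varepsilon$ for large $j$, which immediately gives $\Ia(e^{-\phi_{k_0}^*})\ge e^{-2\varepsilon}\Ia(e^{-\varphi_{i_j}^*})$.
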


\begin{proof}
    The first inequality of \eqref{two-limits} is well-known and appeared in literature many times (see e.g.,  \cite[Lemma 17]{CK2015},  \cite[Lemma 5.4]{FYZZ}, \cite[Lemma 5.8]{HLXZ}). Thus,  we only need to prove the second inequality of \eqref{two-limits}. From \eqref{finite-varphi(x0)} and Lemma
\ref{Le.roc}, one can find a subsequence $\{\varphi_{i_j}\}_{j\in\mathbb{N}}$ of $\{\varphi_i\}_{i\in\mathbb{N}}$ converging to a finite even  convex function $\varphi:M_\mu\to\R$ pointwisely on $M_\mu$ and uniformly on any closed bounded subset of $M_\mu$.   The function $\varphi$ can be easily extended to $\varphi: \mathbb{R}^n\rightarrow \R\cup \{+\infty\}$ by:  
$$\varphi(x):=\lim_{\lambda\to 1^-}\varphi(\lambda x) \ \ \mathrm{if}\   x\in\partial \overline{M}_\mu \ \ \mathrm{and} \ \ \varphi(x):=+\infty \ \mathrm{if} \   
x\notin \overline{M}_\mu. $$ 
Clearly, $\varphi\in  \mathcal{L}_e^+(\mu) \cap \C(\R^n)$. For any $y\in\mathbb{R}^n$, the continuity of $\varphi$ on $\overline{M}_\mu$ yields that 
$$
\varphi^\ast(y)=\sup_{x\in\overline{M}_\mu}\big\{\langle x,y\rangle-\varphi(x)\big\}=\sup_{i\in\mathbb{N}}\big\{\langle x_i,y\rangle-\varphi(x_i)\big\}, 
$$ where $\{x_i\}_{i\in\mathbb{N}}$ is a given dense sequence in $\overline{M}_\mu$. It can be easily checked that,
if we let  $$
\phi^\ast_k(y)=
\max_{i\in[1,k]\cap\mathbb{N}}\big\{\langle x_i,y\rangle-\varphi(x_i)\big\} \ \ \mathrm{for\  each\ } k\in\mathbb{N}, 
$$  then $e^{-\phi^\ast_k}\in\LC$ for $k\in \mathbb N$ big enough (hence $\Ia(e^{-\phi^\ast_k})<\infty$ by Lemma \ref{finiteness of I_q}). Moreover,  $\{\phi^\ast_k\}_{k\in \mathbb N}$ is a monotone increasing  sequence  and converges pointwisely  to $\varphi^*$. By the monotone convergence theorem, 
$$
\lim_{k\to\infty}\Ia(e^{-\phi_k^\ast})
=\Ia(e^{-\varphi^\ast}). 
$$ For $\varepsilon>0$, there exists $k_0\in\mathbb{N}$ big enough such that,
\begin{align}\label{epsilon1}
    \Ia(e^{-\varphi^\ast})\ge \Ia(e^{-\phi^\ast_{k_0}})-\varepsilon.
\end{align} Note that the pointwise convergence of   $ \varphi_{i_j} \rightarrow \varphi$ in $\{x_1,\cdots,x_{k_0}\}$ implies the existence of $j_0\in \mathbb N$ big enough, such that, $\varphi^\ast_{i_j}\ge\phi^\ast_{k_0}-\varepsilon$ for any $j>j_0$.  Then Proposition \ref{properties-a-energy} implies 
\begin{align}\label{epsilon2}
\Ia(e^{-\phi^\ast_{k_0}})
\ge \Ia(e^{-\varphi^\ast_{i_j}})e^{-2\varepsilon}.
\end{align}
Combining \eqref{epsilon1} and \eqref{epsilon2}, we have, for any $j>j_0$, 
$$
  \Ia(e^{-\varphi^\ast})\ge
  \Ia(e^{-\varphi^\ast_{i_j}})e^{-2\varepsilon}-\varepsilon.
$$
This gives the second inequality of \eqref{two-limits} as $\varepsilon\rightarrow 0^+$.
\end{proof}

The following lemma shows that  the minimizers of  the optimization problem \eqref{optimiza-convex} are convex and strictly positive  functions. This approach is motived by the recent work \cite{FYZZ}.

\begin{lemma}\label{Le.vaphi(o)=0}
Let $\alpha>0$, $\mu\in\mathscr{M}$, $\varphi\in  \mathcal{L}_e^+(\mu) \cap \C(\R^n)$  with  $\varphi(o)=0$ and $\Ia(e^{-\varphi^\ast})\ge \tau$.   Then
    \begin{align*}      \int_{\mathbb{R}^n}\varphi d\mu
        \ge \frac{c_\mu}{2}\cdot\left(\frac{\Ia(e^{-\varphi^\ast})}{e^2\Ia(e^{-|\cdot|})}\right)^{\frac{1}{n+\alpha}}- |\mu|,
    \end{align*}
where $c_\mu>0$ is given in \eqref{not-concentrated} and $|\mu|=\int_{\Rn}\,d\mu.$
\end{lemma}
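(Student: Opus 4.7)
The plan is to locate a point $y_0$ where the Legendre dual $\varphi^\ast$ is small relative to $|y_0|$ at a large radius, and then convert this into a lower bound on $\int\varphi\,d\mu$ via the duality bound $\varphi(x)\ge |\langle x,y_0\rangle|-\varphi^\ast(y_0)$ (valid since $\varphi$ is even) combined with hypothesis \eqref{not-concentrated}. Introduce the scale
\[
R=\left(\frac{\Ia(e^{-\varphi^\ast})}{e^2\,\Ia(e^{-|\cdot|})}\right)^{\frac{1}{n+\alpha}},
\quad\text{so that}\quad \Ia(e^{-\varphi^\ast})=e^2 R^{n+\alpha}\Ia(e^{-|\cdot|})
\]
(using the homogeneity $\Ia(e^{-|\cdot|/R})=R^{n+\alpha}\Ia(e^{-|\cdot|})$ from Proposition \ref{properties-a-energy}(i)). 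Since $\varphi$ is even convex with $\varphi(o)=0$, one has $\varphi\ge 0$, hence $\varphi^\ast(o)=0$, $\varphi^\ast\ge 0$, and $\varphi^\ast$ is even.

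The central claim is: \emph{there exists $y_0\in\Rn$ with $|y_0|\ge R$ and $\varphi^\ast(y_0)\le |y_0|/R$.} I would argue by contradiction. If $\varphi^\ast(y)>|y|/R$ for every $|y|\ge R$, combine with $\varphi^\ast\ge 0$ to get the pointwise bound
\[
e^{-\varphi^\ast(y)}\le e\cdot e^{-|y|/R}\qquad\text{for every }y\in\Rn,
\]
because $e\cdot e^{-|y|/R}\ge 1\ge e^{-\varphi^\ast(y)}$ on $|y|\le R$, and $e^{-\varphi^\ast(y)}<e^{-|y|/R}\le e\cdot e^{-|y|/R}$ on $|y|>R$. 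Crucially, this inequality is \emph{strict} on the positive-measure set $\{|y|<R\}$ (since there $e\cdot e^{-|y|/R}>1$). Since the Riesz kernel $|x-y|^{\alpha-n}$ is strictly positive and the product bound $e^{-\varphi^\ast(x)}e^{-\varphi^\ast(y)}\le e^2 e^{-(|x|+|y|)/R}$ is strict on a positive-measure subset of $\Rn\times\Rn$, we obtain
\[
\Ia(e^{-\varphi^\ast})<e^2\Ia(e^{-|\cdot|/R})=e^2 R^{n+\alpha}\Ia(e^{-|\cdot|})=\Ia(e^{-\varphi^\ast}),
\]
a contradiction; so the desired $y_0$ exists.

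Given such a $y_0$, the Legendre duality $\varphi(x)\ge \langle x,y_0\rangle-\varphi^\ast(y_0)$ combined with evenness of $\varphi$ (hence of $\varphi^\ast$) yields $\varphi(x)\ge |\langle x,y_0\rangle|-\varphi^\ast(y_0)$. Integrating against $\mu$ and applying \eqref{not-concentrated} to the unit vector $u_0=y_0/|y_0|$:
\[
\int_{\Rn}\varphi\,d\mu\ge |y_0|\int_{\Rn}|\langle x,u_0\rangle|\,d\mu-\varphi^\ast(y_0)|\mu|\ge c_\mu |y_0|-\frac{|y_0|}{R}|\mu|=|y_0|\bigl(c_\mu-|\mu|/R\bigr).
\]

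I would then finish with a short case analysis. If $R\ge 2|\mu|/c_\mu$, then $c_\mu-|\mu|/R\ge c_\mu/2$ and, using $|y_0|\ge R$,
\[
\int_{\Rn}\varphi\,d\mu\ge \frac{c_\mu R}{2}\ge \frac{c_\mu R}{2}-|\mu|,
\]
which is the claim. Otherwise $R<2|\mu|/c_\mu$ makes the right-hand side $\tfrac{c_\mu}{2}R-|\mu|$ negative, and the bound is trivial from $\varphi\ge 0$. The main obstacle in this plan is the key claim: one has to extract a $y_0$ having \emph{both} large radius and small $\varphi^\ast(y_0)$ simultaneously. The mechanism that makes it possible is that the pointwise bound $\varphi^\ast\ge 0$ already forces $e^{-\varphi^\ast}$ to be dominated by $e\cdot e^{-|\cdot|/R}$ on the ball $B(o,R)$ with strict inequality in the interior, so the equality $\Ia(e^{-\varphi^\ast})=e^2 R^{n+\alpha}\Ia(e^{-|\cdot|})$ leaves no slack to allow $\varphi^\ast(y)>|y|/R$ throughout $\{|y|\ge R\}$.
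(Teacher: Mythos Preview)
Your proof is correct but follows a genuinely different route from the paper's. The paper introduces the sublevel set $K_\varphi=\{x:\varphi(x)\le 1\}$ and its minimal half-width $r_\varphi=\min_{u\in S^{n-1}} h_{K_\varphi}(u)$, attained at some $u_0$. It then bounds $r_\varphi$ from \emph{both} sides: first $\varphi^\ast(y)\ge r_\varphi|y|-1$ for all $y$ gives $r_\varphi\le 1/R$ via monotonicity and homogeneity of $\Ia$; second, convexity together with $\varphi(o)=0$ forces $(\varphi(x)+1)\,2r_\varphi\ge|\langle x,u_0\rangle|$, which after integrating against $\mu$ and using \eqref{not-concentrated} yields $r_\varphi\ge c_\mu/(2\int\varphi\,d\mu+2|\mu|)$. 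Chaining these two bounds on $r_\varphi$ gives the inequality directly, with no case split. Your argument instead works entirely in the dual: you locate a single witness $y_0$ with $|y_0|\ge R$ and $\varphi^\ast(y_0)\le|y_0|/R$ by a strict-monotonicity contradiction, then integrate Fenchel--Young. The paper's route avoids both the case analysis and the appeal to \emph{strict} monotonicity of $\Ia$ (ordinary monotonicity suffices), while yours is more self-contained in that it never introduces the auxiliary body $K_\varphi$. One small implicit point in your argument: you use $R<\infty$; this is justified because $\varphi$ is finite on a neighborhood of $o$ (by \eqref{finite-varphi(x0)} and $o\in M_\mu$), so $\varphi^\ast$ has at least linear growth and $e^{-\varphi^\ast}\in\LC$.
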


\begin{proof}
For any given $\varphi\in  \mathcal{L}_e^+(\mu) \cap \C(\R^n)$, let  $$r_\varphi=\min_{u\in \sphere} \sup_{x\in K_\varphi} \langle x, u\rangle =\sup_{x\in K_\varphi} \langle x, u_0\rangle $$ with $u_0\in S^{n-1}$ fixed, and  $
K_\varphi=\{x\in\mathbb{R}^n: \varphi(x)\le 1\}
$   the origin-symmetric convex set in $\R^n$. For $\mu\in \mathscr{M}$, it is clear that  the origin  $o\in M_{\mu}$, and hence  $\varphi$ is finite near  $o$ due to $\varphi\in  \mathcal{L}_e^+(\mu)$. Thus, $o\in {\rm int}(K_\varphi)$ and $r_\varphi>0$. It follows from \eqref{def-dual} that, for any $y\in\mathbb{R}^n$, 
$$
\varphi^\ast(y)=\sup_{x\in\mathbb{R}^n}\{\langle x,y\rangle -\varphi(x)\}
\ge \sup_{x\in r_\varphi B_2^n}\{\langle x,y\rangle -\varphi(x)\}
\ge r_\varphi|y|-1. 
$$ By Proposition \ref{properties-a-energy},  one has 
$$
\Ia(e^{-\varphi^\ast})\le e^2 \Ia(e^{-r_\varphi|\cdot|})=r_{\varphi}^{-(n+\alpha)}e^2\Ia(e^{-|\cdot|}).
$$ After a rearrangement of the above formula, one can get \begin{align}\label{r1}
\frac{e^2\Ia(e^{-|\cdot|})}{\Ia(e^{-\varphi^\ast})}\ge r_\varphi^{n+\alpha}.
\end{align} 
A standard argument by the convexity of $\varphi$ and $\varphi(o)=0$ shows that,  if  $|\langle x,u_0\rangle|>2 r_\varphi$, then $\frac{2r_\varphi}{|\langle x,u_0\rangle|} x \notin  K_\varphi$ and 
\begin{align*}
    \frac{2r_\varphi}{|\langle x,u_0\rangle|}\varphi(x)
    \ge \varphi\left(\frac{2r_\varphi}{|\langle x,u_0\rangle|}\cdot x+\left(1-\frac{2r_\varphi}{|\langle x,u_0\rangle|}\right)\cdot o\right)
    = \varphi\left(\frac{2r_\varphi}{|\langle x,u_0\rangle|} x\right)>1.
\end{align*} As $\varphi$ is non-negative, then   $
 (\varphi(x)+1)2r_\varphi \ge|\langle x,u_0\rangle|
$ for any $x\in \Rn$. Together with \eqref{not-concentrated}, one has
\begin{align}\label{r2}
    r_\varphi\ge
    \frac{\int_{\mathbb{R}^n}|\langle x,u_0\rangle|dx}{2\int_{\mathbb{R}^n}\varphi d\mu+2|\mu|}
    \ge  \frac{c_\mu}{2\int_{\mathbb{R}^n}\varphi d\mu+2|\mu|}.
\end{align} The desired inequality follows immediately from inequalities \eqref{r1} and \eqref{r2}.
\end{proof}
 
Now, we can finish the proof of  Proposition \ref{existence-of-function}. 

\begin{proof}[Proof of Proposition \ref{existence-of-function}] For any $t>0$,  Proposition \ref{properties-a-energy} implies that \begin{align}\label{homogenenity}
  \Ia(\mathbf{1}_{t B_2^n})= \Ia(\mathbf{1}_{B^n_2}(t^{-1}x)) =t^{n+\alpha}\Ia(\mathbf{1}_{B_2^n}).
\end{align}  Due to \eqref{homogenenity} and the finiteness of $\Ia(\mathbf{1}_{B_2^n})$, the map  $t\mapsto \Ia(\mathbf{1}_{tB_2^n})$ is clearly continuous, and 
$$\lim_{t\to 0^+}\Ia(\mathbf{1}_{ t B_2^n})=0  \ \ \mathrm{and} \ \ \lim_{t\to +\infty }\Ia(\mathbf{1}_{ t B_2^n})=+\infty.$$ Let $t_0\in (0, +\infty)$ be such that $\Ia(\mathbf{1}_{{t_0}B_2^n})=1$. Then, from \eqref{first-moment}, \begin{align}
\int_{\mathbb{R}^n}\breve{\phi}(x)d\mu=\frac{|\mu|}{2}\log \tau+t_0\int_{\mathbb{R}^n}|x|d\mu<\infty
\ \ \text{and}
\ \ \Ia(e^{- \breve{\phi}^\ast})=\tau,  \label{relation-phi}
\end{align} where  $ \breve{\phi}(x)=\frac{1}{2}\log \tau +t_0|x|$ for all $x\in\R^n$.  Therefore, \eqref{optimiza-convex} is well-defined. In particular, $\Theta_{\tau}\in [0, \infty)$, and a limit sequence $\{\varphi_i\}_{i\in\mathbb{N}}$ with $\varphi_i\in  \mathcal{L}_e^+(\mu) \cap\C(\R^n)$ can be found so that  $\Ia(e^{-\varphi_i})\ge \tau$  and  $\int_{\mathbb{R}^n}\varphi_id\mu\to\Theta_{\tau}$ as $i\to\infty$. The latter further implies
$$
\sup_{i\in\mathbb{N}}\int_{\mathbb{R}^n}\varphi_id\mu<\infty.
$$ By Lemma \ref{Le.existence}, there exists  $\varphi \in  \mathcal{L}_e^+(\mu) \cap \C(\R^n)$ satisfying \eqref{two-limits}. In particular, $\Ia(e^{-\varphi^\ast})\ge \tau$.  

  In fact, the function $\varphi$ found above is strictly positive when the given constant $\tau$ is big enough. To this end, if $\varphi(o)=0$,  then   Lemma \ref{Le.vaphi(o)=0} shows that     
$$
\int_{\mathbb{R}^n}\varphi d\mu
\ge c_{n,\alpha,\mu}\cdot \tau^{\frac{1}{n+\alpha}}
-|\mu|,
$$
where  $c_{n,\alpha,\mu}=\frac{c_\mu}{2}\cdot (e^2\Ia(e^{-|\cdot |}))^{-\frac{1}{n+\alpha}}>0$ is a constant  independent of $\tau$. Due to \eqref{relation-phi} and  $\breve{\phi}(x)=\frac{1}{2}\log \tau +t_0|x|\in  \mathcal{L}_e^+(\mu)\cap \C(\Rn)$,  one gets  $
\int_{\mathbb{R}^n} \breve{\phi} d\mu\ge\int_{\mathbb{R}^n}\varphi d\mu
$ and hence $$
\frac{(1+\frac{1}{2}\log \tau) |\mu|+M_1}{\tau^{\frac{1}{n+\alpha}}}
\ge c_{n,\alpha,\mu},
$$
where $M_1=t_0\int_{\mathbb{R}^n}|x|d\mu<\infty$. Clearly, 
$$\lim_{\tau \to+\infty}\frac{(1+\frac{1}{2}\log \tau) |\mu|+M_1}{\tau^{\frac{1}{n+\alpha}}}=0,$$
which is impossible as $c_{n,\alpha,\mu}>0$. This contradiction shows that $\varphi(o)>0$ and hence  $\varphi$  is strictly positive as $\varphi \in  \mathcal{L}^+_e(\mu)\cap\C(\Rn)$. Consequently, for a  sufficiently small
 $\varepsilon>0$,  $\varphi_\varepsilon(x)=\varphi(x)-\varepsilon$ is an even non-negative convex function and
$\Ia(e^{-\varphi_\varepsilon^\ast})=
 \Ia(e^{-\varphi^\ast})e^{-2\varepsilon}$.    Thus, if  $\Ia(e^{-\varphi^\ast})>\tau$, then   
 $\Ia(e^{-\varphi_\varepsilon^\ast})\geq \tau$ for $\varepsilon>0$ small enough and  $$
\int_{\mathbb{R}^n}\varphi_\varepsilon(x)d\mu<\int_{\mathbb{R}^n}\varphi(x)d\mu.
$$ This contradicts to the minimality of $\varphi$, and hence $\Ia(e^{-\varphi^\ast})=\tau.$

In summary,  $\varphi\in  \mathcal{L}_e^+(\mu)\cap \C(\Rn)$ is strictly positive, $\Ia(e^{-\varphi^\ast})\!=\!\tau$ and by \eqref{two-limits}, $$ \Theta_{\tau}\leq  \int _{\Rn} \varphi \,du\leq \lim_{i\to \infty}\int _{\Rn} \varphi_i\,du= \Theta_{\tau}.$$ That says, the function $\varphi$ solves the optimization problem \eqref{optimiza-convex} (and \eqref{optimiza-pro}). 
\end{proof}

We will need the following lemma. 
\begin{lemma}
Let $f=e^{-\varphi}\in \LC$ and $\alpha>0$.  If  $\zeta:\mathbb{R}^n\to\mathbb{R}$ is an even, compactly supported, continuous function, then
\begin{align}\label{derivat-Iqvarphit}
\frac{d\Ia(e^{-(\varphi+t\zeta)^\ast})}{dt}\bigg|_{t=0}
=2\int_{\mathbb{R}^n}\zeta(\nabla\varphi^\ast(x)) I_{\alpha}(e^{-\varphi^*},x)e^{-\varphi^*(x)}dx.
\end{align}
\end{lemma}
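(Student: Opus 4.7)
\medskip
\noindent\textbf{Proof plan.} The strategy is to differentiate under the double integral defining $\Ia$ and then apply Lemma \ref{rotem-111} pointwise in $y$. Since $\zeta$ is continuous and compactly supported, it is bounded; set $M=\|\zeta\|_\infty<\infty$. Writing $\phi_t:=(\varphi+t\zeta)^*$, the definition \eqref{def-dual} yields the uniform two-sided estimate
\begin{equation*}
|\phi_t(y)-\varphi^*(y)|\leq |t|M \qquad \text{for all } y\in\R^n,\ t\in\R,
\end{equation*}
obtained by noting that $t\zeta$ shifts the supremum in $(\varphi+t\zeta)^*$ by at most $|t|M$ uniformly in $x$. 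In particular $e^{-\phi_t(y)}\leq e^{M|t|}e^{-\varphi^*(y)}$.

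Next, applying Lemma \ref{rotem-111} with $\mathfrak{g}=\zeta$ (for the right derivative) and with $\mathfrak{g}=-\zeta$ combined with the substitution $s=-t$ (for the left derivative), one obtains at every $y\in\R^n$ where $\varphi^*$ is differentiable — that is, for a.e.\ $y$ —
\begin{equation*}
\frac{d}{dt}\bigg|_{t=0}\phi_t(y)=-\zeta(\nabla\varphi^*(y)).
\end{equation*}
Consequently, at almost every $(x,y)\in\R^n\times\R^n$,
\begin{equation*}
\frac{d}{dt}\bigg|_{t=0}\!\Big(e^{-\phi_t(x)}e^{-\phi_t(y)}\Big)=\big[\zeta(\nabla\varphi^*(x))+\zeta(\nabla\varphi^*(y))\big]\,e^{-\varphi^*(x)}e^{-\varphi^*(y)}.
\end{equation*}

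The key step is justifying differentiation under the integral sign. Using the exponential bound above and the elementary inequality $|e^{a}-1|\leq |a|e^{|a|}$ applied to $a=-\phi_t(x)-\phi_t(y)+\varphi^*(x)+\varphi^*(y)$, one obtains, for all $|t|\leq 1$,
\begin{equation*}
\left|\frac{e^{-\phi_t(x)}e^{-\phi_t(y)}-e^{-\varphi^*(x)}e^{-\varphi^*(y)}}{t}\right|\leq 2Me^{2M}\,e^{-\varphi^*(x)}e^{-\varphi^*(y)}.
\end{equation*}
Since $e^{-\varphi^*}\in\LC$ (which is implicit in the statement, else $\Ia(e^{-\phi_t})$ is not finite for small $t$), Proposition \ref{finiteness of I_q} gives
$\iint e^{-\varphi^*(x)}e^{-\varphi^*(y)}|x-y|^{\alpha-n}\,dx\,dy=\Ia(e^{-\varphi^*})<\infty$, providing an integrable majorant for the difference quotient. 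The dominated convergence theorem then yields
\begin{equation*}
\frac{d\Ia(e^{-\phi_t})}{dt}\bigg|_{t=0}=\int_{\R^n}\!\int_{\R^n}\frac{[\zeta(\nabla\varphi^*(x))+\zeta(\nabla\varphi^*(y))]\,e^{-\varphi^*(x)}e^{-\varphi^*(y)}}{|x-y|^{n-\alpha}}\,dx\,dy.
\end{equation*}
Symmetry in $x,y$ and Fubini's theorem convert this into twice a single integral, and recognizing the inner $y$-integral as $I_\alpha(e^{-\varphi^*},x)$ produces exactly \eqref{derivat-Iqvarphit}.

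The main obstacle is the singularity $|x-y|^{\alpha-n}$ when $0<\alpha<n$, which prevents naive pointwise dominated convergence without a careful majorant; this is resolved by extracting the uniform exponential bound $e^{-\phi_t}\leq e^{M|t|}e^{-\varphi^*}$ and invoking the finiteness of $\Ia(e^{-\varphi^*})$ from Proposition \ref{finiteness of I_q}, which handles the singularity uniformly in $t$.
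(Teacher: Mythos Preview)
Your proof is correct and follows essentially the same approach as the paper: bound $|(\varphi+t\zeta)^*-\varphi^*|\le |t|M$ from the compactly supported, bounded perturbation, use this to produce an integrable majorant via Proposition \ref{finiteness of I_q}, apply dominated convergence together with Lemma \ref{rotem-111} for the pointwise derivative, and finish by symmetry in $x,y$. Your treatment is in fact slightly more careful than the paper's, since you explicitly handle the left derivative by applying Lemma \ref{rotem-111} to $-\zeta$ and make explicit the implicit assumption $e^{-\varphi^*}\in\LC$ needed to invoke Proposition \ref{finiteness of I_q}.
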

\begin{proof} The condition on $\zeta$ guarantees the existence of $M>0$ such that $|\zeta(x)|\le M$ for all $x\in\mathbb{R}^n$. This, together with \eqref{def-dual}  ensures  that
$$
 \varphi^*-|t|M\leq (\varphi+t\zeta)^*\leq \varphi^*+|t|M.
$$  This further gives
\begin{align*}
    \bigg|\frac{e^{-(\varphi+tg)^*}-e^{-\varphi^\ast}}{t}\bigg|
    \le e^{-\varphi^\ast}\cdot\max\left\{ \left|\frac{e^{tM}-1}{t}\right|,\left|\frac{e^{-tM}-1}{t}\right|\right\}.
\end{align*}
 The maximum on the right-hand side is uniformly bounded  when $|t|>0$ is small enough. It follows from Lemma \ref{rotem-111}, Proposition \ref{finiteness of I_q}, Fubini's theorem, and the dominated convergence theorem that 
 \begin{align*} 
\lim_{t\to0}\frac{\Ia(e^{-(\varphi+t\zeta)^\ast})-\Ia(e^{-\varphi^\ast})}{t}
&= \int_{\R^n}\int_{\R^n}\lim_{t\to0}\frac{e^{-(\varphi+t\zeta)^\ast(x)}e^{-(\varphi+t\zeta)^\ast(y)}-e^{-\varphi^\ast(x)}e^{-\varphi^\ast(y)} }{t|x-y|^{n-\alpha}}dxdy\\
&=2\int_{\R^n}\int_{\R^n}\frac{\zeta(\nabla\varphi^*(x))  e^{-\varphi^*(x)}e^{-\varphi^*(y)}}{|x-y|^{n-\alpha}}dxdy\\&=2\int_{\R^n} \left(\int_{\R^n}\frac{e^{-\varphi^*(y)}}{|x-y|^{n-\alpha}}dy\right) \zeta(\nabla\varphi^*(x))  e^{-\varphi^*(x)}dx\\ &=2\int_{\mathbb{R}^n}\zeta(\nabla\varphi^\ast(x)) I_{\alpha}(e^{-\varphi^*},x)e^{-\varphi^*(x)}dx,
\end{align*} where the third equality fillows from Fubini theorem and the last equality follows from \eqref{Riesz-P-def-1}. This completes the proof.  
\end{proof}

We now prove our main result Theorem \ref{Solution-a-R-e}, which is stated here for  convenience. 

\begin{theorem}
Let $\alpha>0$ and $\mu\in\mathscr{M}$.  There exists a log-concave function $f\in \LC$ with $\Ia(f)=|\mu|$, such that, $
   \mu=\Rma(f,\cdot). $
\end{theorem}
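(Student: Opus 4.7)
The plan is to realize $\mu$ as a Lagrange-multiplier identity coming from the optimization problem \eqref{optimiza-convex}. First, I would fix $\tau>0$ large enough that Proposition \ref{existence-of-function} produces an even convex minimizer $\varphi\in\mathcal{L}_e^+(\mu)\cap\C(\R^n)$ with $\Ia(e^{-\varphi^*})=\tau$ and $\varphi>0$ on $\R^n$. Since $\varphi$ is even and convex, it attains its minimum at $o$, so $\varphi\geq\varphi(o)>0$. Combined with $o\in M_\mu$ and the bound \eqref{finite-varphi(x0)}, this makes $\varphi$ locally bounded near $o$, which forces $\varphi^*$ to grow at least linearly at infinity and hence $e^{-\varphi^*}\in\LC$. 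This is exactly the hypothesis needed in order to apply the derivative formula \eqref{derivat-Iqvarphit}.

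Next, for any even, compactly supported, continuous $\zeta\colon\R^n\to\R$, I would introduce a scalar correction $s(t)\in\R$ designed so that the perturbation $\varphi_t:=\varphi+s(t)+t\zeta$ stays on the constraint surface, $\Ia(e^{-\varphi_t^*})=\tau$, for all small $|t|$. Using $(\phi+c)^*=\phi^*-c$ and the homogeneity \eqref{homogeneity}, this reduces to the closed form $e^{2s(t)}=\tau/\Ia(e^{-(\varphi+t\zeta)^*})$. Differentiating and invoking \eqref{derivat-Iqvarphit} together with \eqref{def-Rma-22} yields $s(0)=0$ and
\begin{equation*}
s'(0)=-\frac{1}{\tau}\int_{\R^n}\zeta\,d\Rma(e^{-\varphi^*},\cdot).
\end{equation*}
The strict positivity of $\varphi$ makes $\varphi_t$ even, non-negative, $\mu$-integrable, and feasible in \eqref{optimiza-pro} for small $|t|$, so minimality of $\varphi$ gives the two-sided bound $s(t)|\mu|+t\int\zeta\,d\mu\geq 0$, which collapses as $t\to 0^\pm$ to the identity
\begin{equation*}
\int_{\R^n}\zeta\,d\mu\;=\;\frac{|\mu|}{\tau}\int_{\R^n}\zeta\,d\Rma(e^{-\varphi^*},\cdot)
\end{equation*}
for every even test $\zeta$ in this class.

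To upgrade this to an identity of measures, I would check that $\Rma(e^{-\varphi^*},\cdot)$ is itself even: the substitution $y\mapsto -y$ in \eqref{def-Rma-22}, together with $\varphi^*$ even, $\nabla\varphi^*$ odd, and $e^{-\varphi^*}I_\alpha(e^{-\varphi^*},\cdot)$ even, gives the claim. Decomposing any continuous compactly supported test function into its even and odd parts, and using that both $\mu$ and $\Rma(e^{-\varphi^*},\cdot)$ annihilate odd functions, then extends the displayed identity to all continuous compactly supported test functions, so $\mu=(|\mu|/\tau)\,\Rma(e^{-\varphi^*},\cdot)$ as Borel measures on $\R^n$. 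Finally, the scalar homogeneity \eqref{Rma-homogeneous} fixes the normalization: setting $c=\sqrt{|\mu|/\tau}$ and $f=c\,e^{-\varphi^*}\in\LC$ gives $\Rma(f,\cdot)=c^{2}\,\Rma(e^{-\varphi^*},\cdot)=\mu$ and, by \eqref{homogeneity}, $\Ia(f)=c^{2}\tau=|\mu|$.

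I expect the main obstacle to be verifying that $\varphi_t$ is genuinely a legitimate competitor in \eqref{optimiza-pro} and that the resulting variational inequality is two-sided. The strict positivity $\varphi\geq\varphi(o)>0$ from Proposition \ref{existence-of-function} is precisely what allows $\varphi_t$ to remain non-negative for both signs of $t$, while the scalar $s(t)$ is designed to keep $\varphi_t$ on the energy level set; differentiability of $s$ at $t=0$ hinges entirely on \eqref{derivat-Iqvarphit}. Without either ingredient, one would obtain only a one-sided bound such as $\int\zeta\,d\mu\geq 0$ for non-negative $\zeta$, which would be far weaker than the equality needed to identify $\mu$ with a scalar multiple of $\Rma(e^{-\varphi^*},\cdot)$.
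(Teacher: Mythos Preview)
Your proposal is correct and follows essentially the same route as the paper: your scalar correction $s(t)$ is exactly the paper's additive constant $-\tfrac12\log\Ia(e^{-(\varphi+t\zeta)^*})+\tfrac12\log\tau$, and the Euler--Lagrange identity for even, compactly supported $\zeta$ is obtained in the same way via \eqref{derivat-Iqvarphit}. If anything, you are slightly more explicit than the paper in checking $e^{-\varphi^*}\in\LC$ and in passing from even test functions to all test functions by the evenness of both $\mu$ and $\Rma(e^{-\varphi^*},\cdot)$; the paper leaves the latter step implicit.
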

\begin{proof} Proposition \ref{existence-of-function} guarantees the existence of a constant $\tau_0>0$, and  an even, lower semi-continuous, convex function $\varphi_0\in \mathcal{L}_e^+(\mu)\cap \C(\Rn)$  with  $\varphi_0>0$  and $\Ia(e^{-\varphi_0^\ast})=\tau_0$ such that
\begin{align*}  
\int_{\mathbb{R}^n}\varphi_0 d\mu =\inf\left\{\int_{\mathbb{R}^n}\varphi d\mu:
    \varphi\in \mathcal{L}_e^+(\mu)\ \ \text{and}\ \  \Ia(e^{-\varphi^\ast})\ge \tau_0 \right\}.  \end{align*}  
 Let $\zeta:\mathbb{R}^n\to\mathbb{R}$ be an even, compactly supported, continuous function. Then $|\zeta(x)|\le M$ for all $x\in\mathbb{R}^n$ with $M>0$ a constant. Moreover, a suitable  constant $t_0\in(0,1)$ can be found so that  $\varphi_t=\varphi_0+t\zeta\in \mathcal{L}_e^+(\mu)$ and $\varphi_0-2|t|M\geq 0$ for any $t\in[-t_0,t_0]$. Define $\phi_t$ by
\begin{align*}
    \phi_t=\varphi_t-\frac{1}{2}\log \Ia(e^{-\varphi_t^\ast})+\frac{1}{2}\log \tau_0.
\end{align*} It is easily checked that, for any $t\in[-t_0,t_0]$ and $x, y\in \R^n$, \begin{align*}
 |\varphi_t(x)-\varphi_0(x)| \leq |t|M \ \ \mathrm{and} \ \
|\varphi_t^\ast(y)-\varphi_0^\ast(y)|\le|t|M. 
\end{align*} In particular,  $\Ia(e^{-\phi_t^\ast})=\tau_0$ and by Proposition \ref{properties-a-energy}, 
$$
\phi_t\geq \varphi_0-|t|M-\frac{1}{2}\log \Ia(e^{-\varphi_0^\ast+|t|M})+\frac{1}{2}\log \tau_0 = \varphi_0-2|t|M\geq 0.
$$
 That says,  $\phi_t$ satisfies the conditions in the optimization problem (\ref{optimiza-pro}) with $\tau_0$ and $\phi_0=\varphi_0$. Thus,   
\begin{align*}
    \frac{d}{dt}\int_{\mathbb{R}^n}\phi_td\mu\bigg|_{t=0}=0.
\end{align*} Employing   \eqref{def-Rma-22} and (\ref{derivat-Iqvarphit}), one can easily get 
\begin{align*}
   0&= \int_{\mathbb{R}^n}\zeta(x)d\mu
    -\frac{|\mu|}{\Ia(e^{-\varphi_0^\ast})}\int_{\mathbb{R}^n}
    \zeta(\nabla\varphi_0^* (x)) I_{\alpha}(e^{-\varphi_0^*},x)e^{-\varphi_0^*(x)}dx\nonumber\\
&=\int_{\mathbb{R}^n}\zeta(x)d\mu
    - \frac{|\mu|}{\Ia(e^{-\varphi_0^\ast})} \int_{\mathbb{R}^n}
    \zeta(z) \,d\Rma(e^{-\varphi_0^*}, z)dz
\end{align*} holds for any even, compactly supported, continuous function $\zeta$. Thus, $$\,d\mu=\frac{|\mu|}{\Ia(e^{-\varphi_0^\ast})}\,d\Rma(e^{-\varphi_0^*}, z).$$ Let $f= \left(\frac{  |\mu| }{\Ia( e^{-\varphi_0^{\ast}})}\right)^{1/2}e^{-\varphi_0^*}$. It follows from \eqref{Rma-homogeneous} that $\mu= \Rma (f,\cdot)$ and $\Ia(f)=|\mu|$ by Proposition \ref{properties-a-energy}.  That is, $f$ is a solution to the  Riesz $\alpha$-energy Minkowski problem for log-concave functions.  
\end{proof}

\vskip 2mm \noindent  {\bf Acknowledgement.}  The
research of NF was supported by  NSFC (No.12001291) and the Fundamental Research Funds for the Central Universities (No.531118010593). The
research of DY was supported by a NSERC grant, Canada.
 The research of ZZ was supported by NSFC (No. 12301071), Natural Science Foundation of
Chongqing, China CSTC (No. CSTB2024NSCQ-MSX1085) and the Science and Technology Research Program of Chongqing Municipal
Education Commission (No. KJQN202201339).

\noindent Niufa Fang, School of  Mathematics, Hunan University, Changsha, Hunan, 410082, China\\
\textit{Email address}: fangniufa@hnu.edu.cn
\vspace{8pt}

\noindent Deping Ye, Department of Mathematics and Statistics, Memorial University of Newfoundland, St. John’s, Newfoundland, A1C 5S7, Canada\\
\textit{Email address}: deping.ye@mun.ca
\vspace{8pt}

\noindent Zengle Zhang, Chongqing Key Laboratory of Group and Graph Theories and Applications, Chongqing University of Arts and Sciences, Chongqing, 402160, China\\
\textit{Email address}: 
zhangzengle128@163.com
\vspace{8pt}

\end{document}